\let\oldmb\mathbold
\protected\def\mathbold{\oldmb}
\numberwithin{equation}{section}
\newcommand{\me}{\mathbb{E}}
\newcommand\del[1]{}
\newcommand\dela[1]{}
\newcommand{\rA}{\mathrm{A}}
\newcommand\toup{\nearrow}
\renewcommand{\v}{\bv}
\newcommand{\bw}{\mathbf{w}}
\newcommand{\MO}{\mathcal{O}}
\newcommand{\bv}{\mathbf{v}}
\newcommand{\bu}{\mathbf{u}}
\newcommand{\el}{\mathbb{L}}
\newcommand{\ve}{\mathbf{V}}
\newcommand{\h}{\mathbf{H}}
\newcommand{\bh}{\mathbb{H}}
\newcommand{\bve}{\mathbf{V}}
\newcommand{\be}{\mathbf{E}}
\newcommand{\eps}{\varepsilon}
\newcommand{\rve}{\rVert}
\newcommand{\lve}{\lVert}
\newcommand{\EE}{\mathbb{E}}
\newcommand{\Lve}{\pmb{\Vert}}
\newcommand{\FF}{\mathbb{F}}
\newcommand{\Rb}[1]{{\mathbb{R}_{#1}}}
\newcommand\cadlag{c{\`a}dl{\`a}g }
\newcommand{\wie}{\widetilde{\eta}}
\newcommand{\RR}{\mathbb{R}}
\newcommand{\CO}{\mathcal{O}}
\renewcommand{\th}{\theta}
\newcommand{\bbe}{\mathbf{E}}
\newcommand{\dn}{{ \nabla}}
\newcommand{\di}{{\rm div\,}}
\renewcommand{\k}{\kappa}
\newcommand{\D}{\Delta}
\newcommand{\CZ}{\mathcal{Z}}
\newcommand{\levy}{L\'evy }
\newcommand{\noe}{\Lve N\Lve_{\mathcal{L}(\be,\h)}}
\newcommand{\nov}{\Lve N\Lve_{\mathcal{L}(\ve,\ve^\ast)}}
\DeclareMathOperator{\Id}{Id}
\theoremstyle{plain}
\newtheorem{condition}{Condition}[section]
\newtheorem{assum}[condition]{Assumption}
\newtheorem{lem}{Lemma}[section]
\newtheorem{thm}[lem]{Theorem}
\newtheorem{prop}[lem]{Proposition}
\theoremstyle{definition}
\newtheorem{Def}[lem]{Definition}
\newtheorem{Rem}[lem]{Remark}
\title[Strong solution to stochastic hydrodynamical systems]{Strong solutions to stochastic hydrodynamical systems with multiplicative noise of jump type}
\author[H. Bessaih]{Hakima Bessaih}
\address[H. Bessaih]{Department of Mathematics, University of Wyoming, 1000 East University Avenue, Laramie WY 82071, United States,}
\author[E. Hausenblas]{Erika Hausenblas}
\author[P. Razafimandimby]{Paul Razafimandimby}
\address[E.~Hausenblas and P.~Razafimandimby]{Department of Mathematics and Information Technology, Montanuniversit\"at Leoben,
Fr. Josefstr. 18, 8700 Leoben, Austria}
\email[H.~Bessaih]{bessaih@uwyo.edu}
\email[E.~Hausenblas]{erika.hausenblas@unileoben.ac.at}
\email[P.~Razafimandimby]{paul.razafimandimby@unileoben.ac.at}
\begin{document}
\begin{abstract}
In this paper we prove the existence and uniqueness of maximal strong (in PDE sense) solution to several stochastic hydrodynamical systems on unbounded and bounded domains
of $\RR^n$, $n=2,3$. This maximal solution turns out to be a global one in the case of 2D stochastic hydrodynamical systems. Our framework is general in the sense that
it allows us to solve the Navier-Stokes equations, MHD equations, Magnetic B\'enard problems, Boussinesq model of the B\'enard
convection, Shell models of turbulence and the Leray-$\alpha$ model with jump type perturbation.
 Our goal is achieved by proving general results about the existence of maximal and global solution to an
abstract stochastic partial differential equations with locally
Lipschitz continuous coefficients. The method of the proofs are based on some truncation and fixed point methods.
\end{abstract}
\subjclass[2000]{60H15, 35Q35, 60H30, 35R15}
\keywords{Strong solution, Hydrodynamical systems, Navier-Stokes, MHD, B\'enard convection, Boussinesq equations, Shell models,
Leray-$\alpha$, Levy noise, Poisson random measure}
\maketitle
\section{Introduction}
Stochastic Partial Differential Equations (SPDEs) are a powerful
tool for  understanding and
investigating mathematically hydrodynamic and turbulence theory. To
model turbulent fluids, mathematicians often use stochastic
equations obtained from adding a noise term in the dynamical
equations of the fluids. This approach is basically motivated by
Reynolds' work which stipulates that turbulent flows  are
composed of slow (deterministic) and fast (stochastic) components.
Recently by following the statistical approach of
turbulence theory,  Flandoli et al \cite{Flandoli2}, Kupiainen
\cite{KAP} confirm the importance of studying the stochastic
version of  fluids dynamics. Indeed, the
authors of  \cite{Flandoli2} pointed out that some rigorous
information on questions of turbulence theory might be obtained
from these stochastic versions.  It
is worth emphasizing that the presence of the stochastic term
(noise) in these models often leads to qualitatively new types of
behavior for the processes. Since the pioneering work of Bensoussan and Temam \cite{Bensoussan-Temam}, there has been an extensive literature on
stochastic Navier-Stokes equations with Wiener noise and related equations,
we refer to  \cite{BBBF06}, \cite{bensoussan},  \cite{bessaih}, \cite{Flandoli3}, \cite{Chueshov}, \cite{DEUGOUE2}, \cite{Flandoli+Gatarek}, \cite{ROZOVSKII1} amongst other.

In the last five years, there has been an extensive effort to
tackle SPDEs with Levy noise. There are several examples where the
Gaussian noise is not well suited to represent realistically
external forces. For example, if the ratio between the time scale
of the deterministic part and that of the stochastic noise is
large, then the temporal structure of the forcing in the course of
each event has no influence on the overall dynamics, and - at the
time scale of the deterministic process - the external forcing can
be modelled as a sequence of episodic instantaneous impulses. This
happens for example in Climatology (see, for instance,
\cite{MR2205118}). Often the noise observed by time series is
typically asymmetric, heavy-tailed and has  non trivial kurtosis.
These are all features which cannot be captured by a Gaussian
noise, but rather by a  \levy noise with appropriate parameters.
\levy randomness requires different techniques from the ones used
for Brownian motion and are less amenable to mathematical
analysis.
We refer to \cite{ZB+EH+JZ},  \cite{Motyl-2},\cite{DongNS},  \cite{paulandme2} and \cite{Motyl} that deal with  stochastic hydrodynamical systems
driven by \levy type noise. Most of these articles are about the existence of solution which are weak in the PDEs sense.

In this paper, we are interested in proving the existence and uniqueness of maximal and global strong solution of \levy driven hydrodynamical systems such as
the Navier-Stokes equations (NS), Magnetohydrodynamics equations (MHD), Magnetic B\'enard problem (MB), Boussinesq model for B\'enard convection (BBC), Shell models of turbulence, and 3-D
Leray-$\alpha$ for Navier-Stokes equations. Here,  strong solutions should be understood in both the Probability and PDEs senses.
Our objectives are achieved by adopting the unified approach initiated and developed in \cite{Chueshov} and used later in
\cite{ZB+EH+JZ}. This approach is based on rewriting the various equations above into an abstract stochastic evolution equations in a Hilbert space
$\ve$ of the following form
\begin{equation}\label{Full-EQ-A}
 \bu(t)= \bu_0-\int_0^{t} \Big[A\bu(s)+F(\bu(s))\Big]ds+\int_0^{t}\int_Z  G(z,\bu(s))\wie(dz,ds),
\end{equation}
where $\int_Z  G(z,\bu(s))\wie(dz,ds)$ represents a global
Lipschitz continuous multiplicative noise of jump type. In Theorem
\ref{MAX-FULL-EQ} we give sufficient conditions (on $A$ and $F$)
for the existence and uniqueness of a maximal solution to
\eqref{Full-EQ-A}. Sufficient conditions for non-explosion of the
maximal solution in finite time is given in Theorem
\ref{GLO-FULL-EQ}.  These two theorems are our main results and
their assumptions are carefully chosen so that they are verified
by the NS, MHD, MB, BBC, Shell models and the Leray-$\alpha$
models. In Section \ref{EXAM} we borrow the examples and the
notations in \cite{Chueshov} and give a detailed account of the
applicability of our framework to the fluid models we cited in the
previous sentence.

The book \cite{Pes+Zab} contains several results about existence of solution to abstract SPDEs driven by \levy noise in Hilbert space setting, but the
hypotheses in this book do not cover the various hydrodynamical systems that we enumerated above. We also note that
while there are several results about the existence of
solution which are strong in PDEs sense for stochastic hydrodynamical systems perturbed by Wiener noise
(see, for instance, \cite{Bensoussan}, \cite{SLC-JAP} ,\cite{G-H+Z}, \cite{KIM}, \cite{MIKU}, \cite{ROZOVSKII1} and references therein), it seems that this is
the first paper treating the existence of strong (in PDE sense) solution for stochastic hydrodynamical systems with \levy noise. To prove our results we
closely follow \cite{Brz+Millet_2012} (see also \cite{SLC-JAP}) which in turn followed methods elaborated in two
papers by De Bouard and Debussche \cite{deBouard+Deb_1999,deBouard+Deb_2003}.

The layout of the present paper is as follows. In Section \ref{ABSTRACT}, we introduce the abstract stochastic evolution equation that our result will be based on.
In the very section we give the notations and standing assumptions, and prove some preliminary results that we are using throughout. Section \ref{MAX-GLO}
is devoted to the statements
and the proofs of our main results. We will mainly show that under the assumptions introduced in Section  \ref{ABSTRACT} the Eq. \eqref{Full-EQ-A}
admits a unique maximal local solution, and with additional conditions on $F$ and $G$ we prove that this maximal local solution turns out to be a global one.
 The results are obtained by use of cut-off and fixed point methods introduced in \cite{Brz+Millet_2012}. In Section \ref{EXAM} we give
 a detailed discussion
 on how our abstract results are used to solve the stochastic NS, MHD, MB, BBC, Shell models and Leray-$\alpha$ models
  driven by multiplicative noise of jump type. Most of the
  examples and notations in Section \ref{EXAM} are taken from
  \cite{Chueshov}.
  In appendix we prove the well-posedness of a linear stochastic evolution equations driven by compensated Poisson random measure which is very important for our analysis.

\section{Description of an abstract stochastic evolution equation}\label{ABSTRACT}
In this paper we give the necessary notations and standing assumptions used throughout the paper. We also prove some preliminary results that are
very important for our analysis.
\subsection{Notations and Preliminary results}\label{abstract-framework}

In this section we start with some notations,  then introduce the assumptions used throughout the paper and our abstract stochastic equation.

Let $(\ve, \lve \cdot \rve)$, $(\h, \lvert \cdot \rvert)$ and $(\be, \lve \cdot \rve_\ast)$ be three
separable and reflexive Hilbert spaces. The scaler product in $\h $ is denoted by $\langle u, v\rangle$ for any $u, v \in \h$.
The same symbol $\langle \phi , v \rangle$ will also be used to denoted duality pairing of $\phi\in \ve^\ast$ and
 $v \in \ve$.
We denote by $\mathcal{L}(Y_1,Y_2)$ be the space of bounded linear maps from a Banach space $Y_1$ into another Banach space $Y_2$.

For $T_2>T_1 \ge 0$ we set
\begin{equation}\label{eqn-X_T}
 X_{T_1,T_2}=L^\infty(T_1,T_2;\ve) \cap L^2(T_1,T_2;\be),
\end{equation}
with the norm $\lve u \rve_{X_{T_1, T_2}}$ defined by
\begin{equation}\label{eqn-X_T-norm}
 \Vert u\Vert_{X_{T_1,T_2}}^2= \sup_{s \in [T_1,T_2]} \lve \bu_n(s)\rve^2+\int_{T_1}^{T_2} \lve \bu_n(s) \rve_\ast^2\, ds.
 \end{equation}
For $T_1=0$ and $T_2=T>0$ we simply write $X_T:= X_{0, T}$.

Let $Y$ be a separable and complete metric space and $T>0$. The
space $\mathbf{D}([0,T];Y)$ denotes the space of all right continuous
functions $x:[0,1]\to Y$ with left limits. We equip $\mathbf{D}([0,T];Y)$ with the
Skorohod topology in which $\mathbf{D}([0,T];Y)$ is both separable and
complete. For more information about Skorohod space and topology
we refer to Ethier and
Kurtz \cite{MR838085}.


 Let $(Z,\mathcal{Z})$ be a separable metric space and let
 $\nu$ be a $\sigma$-finite positive measure on it. Suppose that $\mathfrak{P}=(\Omega,\mathcal{F},\FF,\mathbb{P})$ is a filtered
probability space, where $\FF=(\mathcal{F}_t)_{t\geq 0}$ is a filtration,
and $\eta: \Omega\times \mathcal{B}(\mathbb{R}_+)\times\mathcal{Z}\to
\bar{\mathbb{N}}$ is a time homogeneous Poisson random measure with
the intensity measure $\nu$ defined over the filtered probability
space $\mathfrak{P}$. We will denote by $\tilde\eta=\eta-\gamma$
the compensated Poisson random measure associated to $\eta$ where
the compensator $\gamma$ is given by
$$
\mathcal{B}(\mathbb{R}_+)\times \mathcal{Z}\ni (I, A)\mapsto  \gamma(I,A)=\nu(A)\lambda(I)\in\mathbb{R} _ +.
$$
For each Banach space $B$ we denote by $M^2(0,T;B)$ the  space of all progressively measurable $B$-valued processes such that
$$\lve u\rve^2_{M^2(0,T;B)}=\mathbb{E}\int_0^T \lve u(s)\rve^2_{B}ds<\infty.$$
Throughout the paper, let us denote
by $M^2(X_T)$,
the space of all progressively measurable $\ve\cap \h$-valued processes
whose trajectories belong to $X_T$ almost surely, endowed with a
norm
\begin{equation}\label{eqn-M^2X_T}
\Vert u \Vert_{M^2(X_T)}^2 = \mathbb{E}\Big[ \sup_{s \in [0,T]}
\lve u(s)\rve^2+\int_0^T \Vert u(s) \Vert_\ast^2\, ds\Big].
\end{equation}
Let $H$ be a separable Hilbert space. {Following the notation of \cite{Brz+Haus_2009}, let
$\mathcal{M}^2(\Rb{+},L^2(Z,\nu,H))$ be the class of all progressively measurable processes
$\xi:\Rb{+}\times Z\times \Omega \to H$ satisfying the condition
\begin{equation}\label{cond-2.01}
          \mathbb{E} \int_0^{T} \int_Z\vert \xi(r,z)\vert^2_H\nu(dz)\,dr <\infty, \quad \forall T>0.
\end{equation}
If $T>0$, the class of all
progressively measurable processes $\xi:[0,T]\times  Z\times
\Omega \to H$ satisfying the condition \eqref{cond-2.01} just
for this one $T$,  will be denoted by
$\mathcal{M}^2(0,T,L^2(Z,\nu,H))$.
Also, let $\mathcal{M}_{step}^2(\Rb{+},L^2(Z,\nu,H))$ be the space of all processes $\xi \in \mathcal{M}^2(\Rb{+},L^2(Z,\nu,H))$ such that
$$
\xi(r) = \sum_{j=1} ^n 1_{(t_{j-1}, t_{j}]}(r) \xi_j,\quad 0\le r,
$$
where $\{0=t_0<t_1<\ldots<t_n<\infty\}$ is a partition of $[0,\infty)$, and
for all $j$,    $\xi_j$ is  an $\mathcal{F}_{t_{j-1}}$ measurable random variable. For any $\xi\in\mathcal{M}_{step}^2(\Rb{+},L^2(Z,\nu,H))$ we set
\begin{equation}
\label{eqn-2.02} \tilde{I}(\xi)= 
\sum_{j=1}^n  \int_Z  \xi_j (z) \tilde\eta \left(dz,(t_{j-1}, t_{j}] \right).
\end{equation}
Basically, this is the definition of stochastic integral of a random step process $\xi$ with respect to the compound random Poisson measure $\tilde{\eta}$.
The extension of this integral on $\mathcal{M}^2(\Rb{+},L^2(Z,\nu,H))$ is possible thanks to the following result which is taken from
\cite[Theorem C.1]{Brz+Haus_2009}.
\begin{thm}\label{THM-ZB+EH}
There exists a unique bounded linear operator
$$ I  : \mathcal{M}^2 (\mathbb{R}_+, L^2 (Z, \nu; H)) \rightarrow L^2 (\Omega,\mathcal{F}; H)$$
such that for $\xi \in  \mathcal{M}_{step}^2(\mathbb{R}_+, L^2(Z, \nu; H))$ we have $I(\xi ) =  \tilde{I}(\xi)$. In particular, there exists a constant
$C=C(H)$ such that for any $\xi\in \mathcal{M}^2(\Rb{+},L^2(Z,\nu,H))$,
\begin{equation}
\label{ineq-2.03} \mathbb{E} \vert \int_0^t \int_Z \xi(r,z)
\tilde{\eta}(dz,dr)\vert ^2_H \leq C\, \mathbb{E}
\int_0^t\int_Z\vert \xi(r,z)\vert^2_H\, \nu(dz)\,dr, \; t\geq 0.
\end{equation}
Moreover, for each $\xi\in \mathcal{M}^2(\Rb{+},L^2(Z,\nu,H))$ , the process $I (1_{[0,t]} \xi)$, $t \ge 0$, is an
$H$-valued \cadlag martingale. The process $1_{[0,t]}\xi$ is defined by $[1_{[0,t]} \xi ](r, z, \omega)
:= 1_{[0,t]} (r)\xi(r, z, \omega),$ $t\ge 0$, $r \in \mathbb{R}_+$ , $z\in Z$ and $\omega\in \Omega$.

As usual we will write
$$
 \int_0^t \int_Z \xi(r,z)
\tilde{\eta}(dz,dr) := I(\xi)(t),\quad t\ge 0.
$$

\end{thm}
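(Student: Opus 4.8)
The statement is the classical construction of an It\^o-type stochastic integral with respect to a compensated Poisson random measure, so the plan is to build $I$ by isometric extension from step processes, following the standard blueprint (see \cite{Brz+Haus_2009, Pes+Zab}). \emph{Step 1: the isometric inequality on step processes.} First I would fix $T>0$ and a step process $\xi(r)=\sum_{j=1}^n 1_{(t_{j-1},t_j]}(r)\xi_j$ with each $\xi_j$ being $\mathcal F_{t_{j-1}}$-measurable, expand $\lvert\tilde I(\xi)\rvert_H^2$ as a double sum of inner products, and take expectations. For $i<j$ the term $\mathbb{E}\langle \int_Z\xi_i\,\tilde\eta(dz,(t_{i-1},t_i]),\int_Z\xi_j\,\tilde\eta(dz,(t_{j-1},t_j])\rangle$ vanishes after conditioning on $\mathcal F_{t_{j-1}}$, because the restriction of $\eta$ to $(t_{j-1},t_j]\times Z$ is independent of $\mathcal F_{t_{j-1}}$ and the compensated measure has zero conditional mean, while the first factor is $\mathcal F_{t_{j-1}}$-measurable. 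For each diagonal term I would again condition on $\mathcal F_{t_{j-1}}$ and apply the elementary second-moment identity $\mathbb{E}\lvert\int_Z f(z)\,\tilde\eta(dz,I)\rvert_H^2=\lambda(I)\int_Z\lvert f(z)\rvert_H^2\,\nu(dz)$, valid for deterministic $f$, frozen at $f=\xi_j$. Summing gives $\mathbb{E}\lvert\tilde I(\xi)\rvert_H^2=\mathbb{E}\int_0^\infty\int_Z\lvert\xi(r,z)\rvert_H^2\,\nu(dz)\,dr$, i.e.\ \eqref{ineq-2.03} holds on step processes, indeed with $C=1$ since $H$ is Hilbert.

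\emph{Step 2: density, and the extension.} Next I would show that $\mathcal M^2_{step}(0,T,L^2(Z,\nu,H))$ is dense in the Hilbert space $\mathcal M^2(0,T,L^2(Z,\nu,H))$ carrying the norm $\big(\mathbb{E}\int_0^T\int_Z\lvert\xi(r,z)\rvert_H^2\,\nu(dz)\,dr\big)^{1/2}$: given $\xi$ there, one first truncates to reduce to bounded integrands, then approximates in time by left-continuous piecewise-constant processes along a refining sequence of partitions (e.g.\ $\xi^{(k)}(r)=\xi(t^k_{\ell-1})$ for $r\in(t^k_{\ell-1},t^k_\ell]$, or a time-averaged regularization), progressive measurability guaranteeing these lie in $\mathcal M^2_{step}$ and convergence being a continuity-in-$L^2$ argument carried out in the $L^2(Z,\nu,H)$-valued setting. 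By Step 1, $\tilde I$ is a bounded (isometric) linear map from this dense subspace into $L^2(\Omega,\mathcal F;H)$, so it extends uniquely to a bounded linear operator $I$ on all of $\mathcal M^2(0,T,L^2(Z,\nu,H))$, and \eqref{ineq-2.03} passes to the limit. Uniqueness of continuous extensions yields consistency of these operators as $T$ grows, hence a single operator $I$ on $\mathcal M^2(\mathbb{R}_+,L^2(Z,\nu,H))$, which proves the first two assertions.

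\emph{Step 3: the \cadlag martingale property, and the main difficulty.} For a step process $\xi$, the process $t\mapsto I(1_{[0,t]}\xi)$ is a finite sum of increments of $\tilde\eta$ and hence, by the independence/mean-zero property used in Step 1, a \cadlag $\FF$-martingale. For general $\xi\in\mathcal M^2$ I would pick step processes $\xi_m\to\xi$; combining Doob's maximal inequality with \eqref{ineq-2.03} shows that $\big(I(1_{[0,\cdot]}\xi_m)\big)_m$ is Cauchy in the space of \cadlag square-integrable martingales on $[0,T]$, so a subsequence converges uniformly on $[0,T]$ almost surely to a \cadlag process; the martingale identity survives the $L^1$ limit, and this limit is the desired modification of $I(1_{[0,t]}\xi)$. \emph{The main obstacle} is not any single estimate but the bookkeeping in Step 1 in the filtered setting: making the conditioning rigorous requires the precise definition of a time-homogeneous Poisson random measure over $\mathfrak P$, namely independence of increments over disjoint future time-strips from the current $\sigma$-field, together with the careful passage to a \cadlag modification in Step 3; the density argument of Step 2 is routine once set up in the function-space-valued framework.
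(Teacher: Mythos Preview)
Your sketch is the standard, correct construction of the It\^o integral with respect to a compensated Poisson random measure: isometry on step processes via the independent-increment property, density of step processes in $\mathcal{M}^2$, bounded linear extension, and passage to a \cadlag martingale modification through Doob's maximal inequality. There is no substantive gap.

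However, you should be aware that the paper does \emph{not} prove this theorem at all: it is stated explicitly as being ``taken from \cite[Theorem C.1]{Brz+Haus_2009}'' and is used as a black box. So there is nothing to compare your approach against; you have supplied a proof where the paper simply imports the result from the literature. Your write-up is in fact a reasonable outline of how that cited theorem is proved in \cite{Brz+Haus_2009} and in \cite{Pes+Zab}.
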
}

Now we introduce the following standing assumptions.
\begin{assum}\label{lin-terms}
We will identify $\h$ with its dual $\h^\ast$, and \del{. We also identify $\h$ with the dual of $E$} we assume that the embeddings
$$\be \subset \ve \subset \h\subset \ve^\ast \subset \be^\ast $$ are continuous and dense.

 Let $N$ be a self-adjoint operator on $\h$ such that $N \in \mathcal{L}(\be, \h)\cap \mathcal{L}(\ve, \ve^\ast)$. Also let $A$ be a bounded linear map from $\be$ into $\h$. We assume that there exist $C_N, C_A >0$ such that
 $$\langle Au, Nu\rangle\ge   C_A \lve u\rve^2_\ast \text{ and }
  \langle Nu, u\rangle \ge C_N \lve u \rve^2,$$
for any $u\in V$. The norm of $N\in \mathcal{L}(\be,\h)$ and $N\in \mathcal{L}(\ve,\ve^\ast)$  will be denoted respectively by $ \Lve N\Lve_{\mathcal{L}(\be,\h)} $ and
 $\Lve N \Lve_{\mathcal{L}(\ve,\ve^\ast)}$ throughout.
\end{assum}

Let $F$ and $G$ be two nonlinear mappings satisfying the following
sets of conditions.
\begin{assum}\label{assum-F}
 Suppose that $F: \be \to \h$ is such that
$F(0)=0$ and there exists $p \geq 1$, $\alpha \in [0,1)$ and $C>0$
such that
\begin{equation}\label{eqn-local Lipschitz-F}
\lvert F(y)-F(x) \lvert \leq C \Big[ \lve y-x\lve \lve
y\lve^{p-\alpha} \Vert y\Vert_\ast^\alpha + \Vert y-x\Vert_\ast^\alpha
\lve y-x\lve^{1-\alpha} \lve x\lve^p\Big],
\end{equation}
for any $x, y\in \be$.
\end{assum}
\del{Given two Hilbert spaces $K$ and $H$,  we denote by $\mathcal{J}_2(K,H)$ the Hilbert space of all Hilbert-Schmidt operators from $K$ to $H$. For any
$G\in \mathcal{J}_2(K,H)$ its Hilbert-Schmidt norm will be denoted throughout the paper by $\lve G\rve_{\mathcal{J}_2}$.}
\begin{assum}\label{assum-G}
\begin{enumerate}[(i)]
 \item \label{part-i}
 Assume that $G: \ve \to L^{2p}(Z,\nu, \ve)$ and there exists a constant $\ell_p>0$ such that
\begin{equation}\label{eqn-local Lipschitz-G}
\lVert G(x)-G(y)\rVert^{2p}_{L^{2p}(Z,\nu, \ve)}\le \ell^p_p \rve x-y\rve^{2p},
\end{equation}
for any $x, y\in \ve$ and $p=1,2$.

Note that this implies in particular that there exists a constant $\tilde{\ell}_p>0$ such that
\begin{equation}\label{Lipschitz-G-2}
\lVert G(x)\rVert^{2p}_{L^{2p}(Z,\nu, \ve)}\le \tilde{\ell}^p_p(1+ \rve x\rve^{2p}),
\end{equation}
for any $x\in \ve$ and $p=1,2$.
\item \label{part-ii} We also assume that $G$ satisfies the inequality \eqref{eqn-local Lipschitz-F} with the norm of $\ve$ replaced by the norm of $\h$. More precisely, there exists
 $\ell_p>0$ such that
\begin{equation}\label{eqn-local Lipschitz-G-H}
\lVert G(x)-G(y)\rVert^{2p}_{L^{2p}(Z,\nu, \h)}\le \ell^p_p \rvert x-y\rvert^{2p},
\end{equation}
for any $x, y\in \ve$ and $p=1,2$.

\end{enumerate}

\end{assum}

Throughout this work we fix a positive number $T$. One of our objectives is to prove the existence and uniqueness of maximal/local solution of the following stochastic evolution equation
\begin{equation}\label{Full-EQ}
 \bu(t)= \bu_0-\int_0^{t} \Big[A\bu(s)+F(\bu(s))\Big]ds+\int_0^{t}\int_Z  G(z,\bu(s))\wie(dz,ds).
\end{equation}
The above identity is the shorthand of the following identity
\begin{equation}\label{WEAK-EQ}
 \langle \bu(t), v\rangle= \langle \bu_0, v\rangle-\int_0^{t} \langle \big[A\bu(s)+F(\bu(s))\big], v\rangle ds+\int_0^{t} \int_Z \langle  G(\bu(s)), v \rangle \wie(dz,ds),
\end{equation}
for any $t\in [0,T]$ and $v\in \h$.

 Now, let us introduce the concept of local and maximal local solution.
\begin{Def}[Local solution]\label{Def-Loc-SOL}
 By a local solution of \eqref{Full-EQ} we mean a pair $(\bu,\tau_\infty)$ such that
 \begin{enumerate}
  \item the symbol $\tau_\infty$ is a stopping time such that $\tau_\infty\le T $ a.s. and there exists a nondecreasing sequence $\{\tau_n, n\ge 1\}$
  stopping times with $\tau_n\uparrow \tau_\infty$ a.s. as $n\uparrow  \infty$,
  \item the symbol $\bu$ denotes a progressively measurable stochastic process such that $u\in X_t$ a.s. for any $t\in [0,\tau_\infty)$ and
  \begin{equation}\label{Full-EQ-2}
 \bu(t\wedge \tau_n)= \bu_0-\int_0^{t\wedge \tau_n} \Big[A\bu(s)+F(\bu(s))\Big]ds+\int_0^{t\wedge \tau_n} \int_Z  G(z,\bu(s))\wie(dz,ds),
\end{equation}
holds for any $t\in [0,T]$ and $n\ge 1$ with probability 1.
 \end{enumerate}
The identity \eqref{Full-EQ-2} is the shorthand of the following
\begin{equation}\label{WEAK-EQ-2}
 \langle \bu(t\wedge \tau_n), v\rangle= \langle \bu_0, v\rangle-\int_0^{t\wedge \tau_n} \langle \big[A\bu(s)+F(\bu(s))\big], v\rangle ds+\int_0^{t\wedge \tau_n}\int_Z \langle  G(\bu(s)), v \rangle \wie(dz,ds),
\end{equation}
holds for any $t\in [0,T]$, $v\in \h$, and $n\ge 1$ with probability 1.
\end{Def}
We also define the maximal local solution to \eqref{Full-EQ}.
\begin{Def}[Maximal local solution]\label{Def-Max-SOL}
\begin{enumerate}
 \item
 Let $(\bu,\tau_\infty)$ be a local solution to \eqref{Full-EQ} such that $\lim_{t\toup \tau_\infty}\lve \bu\rve_{X_t}=\infty$ on $\{\omega, \tau_\infty< T\}$,
then the local process $(\bu,\tau_\infty)$ is called a maximal local solution. If $\tau_\infty<T$, then the stopping time $\tau_\infty$ is called the explosion time of the stochastic process $u$.
\item A maximal local solution $(\bu,\tau_\infty)$ is said unique if for any other maximal local solution $(\v,\sigma_\infty)$ we have $\sigma_\infty=\tau_\infty$ and
$\bu(t)=\bv(t)$ for any $0\le t<\tau_\infty$ with probability one.
\item If the explosion time of the stochastic process $\bu$ is equal to $T$ with probability 1, then the stochastic process $\{\bu(t), t\in [0,T]\}$ is called a global solution.
\end{enumerate}
\end{Def}

As in \cite{SLC-JAP}, we let
$\theta:\mathbb{R}_+\to [0,1]$ be a ${\mathcal C}^\infty_0$ non
increasing function
 such that
\begin{equation}\label{eqn-theta} \inf_{x\in\mathbb{R}_+}\theta^\prime(x)\geq -1, \quad \theta(x)=1\;
\mbox{\rm  iff } x\in [0,1]\quad \mbox{\rm  and } \theta(x)=0 \;
\mbox{\rm  iff } x\in [2,\infty).
\end{equation}
and for $n\geq 1$ set  $\theta_n(\cdot)=\theta(\frac{\cdot}{n})$.
Note that if $h:\mathbb{R}_+\to\mathbb{R}_+$ is a non decreasing
function, then for every $x,y\in {\mathbb R}$,
\begin{equation}\label{ineq-theta}
 \theta_n(x)h(x) \leq h(2n),\quad
\vert \theta_n(x)-\theta_n(y)\vert \leq \frac1n |x-y| . 
\end{equation}
\begin{prop}\label{prop-global Lipschitz-F}
Let $F$ be a nonlinear mapping satisfying Assumption
\ref{assum-F}. Let us consider a map $B_n^T: X_T  \to L^2(0,T;\h) $ defined by
$$ B_n^T(u)(t):= \theta_n( \Vert u
\Vert_{X_t}) F(u(t)), \,\, u\in X_T,\,\, t\in[0, T].$$
Then $B_n^T$ is globally Lipschitz and moreover, for any
$u_1,u_2 \in X_T$,
\begin{equation}\label{eqn-global Lipschitz-F}
\Vert B_n^T(u_1)-B_n^T(u_2)\Vert_{L^2(0,T;\h)} \leq  C (2n)^{p}\Big[   (2n) C +1\Big]  T^{\frac{1-\alpha}2} \Vert  u_1 -u_2
\Vert_{X_T}.
\end{equation}

\end{prop}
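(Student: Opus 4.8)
The plan is to prove \eqref{eqn-global Lipschitz-F} by estimating $\lvert B_n^T(u_1)(t)-B_n^T(u_2)(t)\rvert$ in $\h$ pointwise in $t$ and then integrating over $[0,T]$. Fix $u_1,u_2\in X_T$. The elementary but decisive fact is that, by \eqref{eqn-X_T-norm}, $t\mapsto\Vert u\Vert_{X_t}$ is non-decreasing, so wherever $\theta_n(\Vert u\Vert_{X_t})\ne0$ one has $\Vert u\Vert_{X_t}<2n$, hence $\lve u(t)\rve\le\Vert u\Vert_{X_t}<2n$ for a.e.\ such $t$ and also $\int_0^t\Vert u(s)\Vert_\ast^2\,ds\le (2n)^2$. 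The naive step would be to write $B_n^T(u_1)(t)-B_n^T(u_2)(t)=\theta_n(\Vert u_1\Vert_{X_t})[F(u_1(t))-F(u_2(t))]+[\theta_n(\Vert u_1\Vert_{X_t})-\theta_n(\Vert u_2\Vert_{X_t})]F(u_2(t))$, but then $F(u_1(t))-F(u_2(t))$ estimated through \eqref{eqn-local Lipschitz-F} keeps a factor $\lve u_2(t)\rve^{p}$ that the cut-off on $u_1$ does not control. The remedy is to split $[0,T]$ into $\Omega_{\le}:=\{t:\Vert u_1\Vert_{X_t}\le\Vert u_2\Vert_{X_t}\}$ and its complement and, on $\Omega_{\le}$, to use instead
\[
B_n^T(u_1)(t)-B_n^T(u_2)(t)=[\theta_n(\Vert u_1\Vert_{X_t})-\theta_n(\Vert u_2\Vert_{X_t})]F(u_1(t))+\theta_n(\Vert u_2\Vert_{X_t})[F(u_1(t))-F(u_2(t))],
\]
with $u_1,u_2$ interchanged on the complement. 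Since $\theta_n$ is non-increasing, on $\Omega_\le$ the single--$F$ term carries the argument of smaller $X_t$-norm, which is $<2n$ exactly when its coefficient $\theta_n(\Vert u_1\Vert_{X_t})-\theta_n(\Vert u_2\Vert_{X_t})$ is non-zero, while $\theta_n(\Vert u_2\Vert_{X_t})\ne0$ forces $\Vert u_1\Vert_{X_t}\le\Vert u_2\Vert_{X_t}<2n$, so in the difference term both arguments lie below $2n$.

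\textbf{Pointwise estimates.} Granting this splitting, on $\Omega_\le$ I would bound the single--$F$ term using $F(0)=0$ and \eqref{eqn-local Lipschitz-F} with $y=u_1(t)$, $x=0$, which give $\lvert F(u_1(t))\rvert\le C\lve u_1(t)\rve^{p+1-\alpha}\Vert u_1(t)\Vert_\ast^{\alpha}\le C(2n)^{p+1-\alpha}\Vert u_1(t)\Vert_\ast^{\alpha}$ on the set where the coefficient is non-zero, together with \eqref{ineq-theta} and the triangle inequality for the norm $\Vert\cdot\Vert_{X_t}$, which give $\lvert\theta_n(\Vert u_1\Vert_{X_t})-\theta_n(\Vert u_2\Vert_{X_t})\rvert\le\tfrac1n\Vert u_1-u_2\Vert_{X_t}$; the product is then $\le\tfrac{C(2n)^{p+1-\alpha}}{n}\Vert u_1-u_2\Vert_{X_t}\,\Vert u_1(t)\Vert_\ast^{\alpha}$. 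For the difference term, \eqref{eqn-local Lipschitz-F} with $y=u_1(t)$, $x=u_2(t)$ and $\lve u_1(t)\rve,\lve u_2(t)\rve<2n$ give $C(2n)^{p-\alpha}\lve u_1(t)-u_2(t)\rve\,\Vert u_1(t)\Vert_\ast^{\alpha}+C(2n)^{p}\Vert u_1(t)-u_2(t)\Vert_\ast^{\alpha}\lve u_1(t)-u_2(t)\rve^{1-\alpha}$. The analogous bounds hold on the complement.

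\textbf{Integration.} Finally I would integrate in $t$ over $[0,T]$, for which two facts suffice: $\lve u_1(t)-u_2(t)\rve$ and $\Vert u_1-u_2\Vert_{X_t}$ are both $\le\Vert u_1-u_2\Vert_{X_T}$ for all $t$; and, by Hölder's inequality, $\int_0^T g^{2\alpha}\,dt\le T^{1-\alpha}\bigl(\int_0^T g^2\,dt\bigr)^{\alpha}$. With $g=\Vert u_1(\cdot)\Vert_\ast$ this makes the $\Vert u_1(t)\Vert_\ast^{\alpha}$ factor contribute (in $L^2(0,T)$-norm) $(2n)^{\alpha}T^{(1-\alpha)/2}$, because monotonicity of $t\mapsto\Vert u_1\Vert_{X_t}$ forces $\int_{\{\Vert u_1\Vert_{X_t}<2n\}}\Vert u_1(t)\Vert_\ast^2\,dt\le(2n)^2$; with $g=\Vert u_1(\cdot)-u_2(\cdot)\Vert_\ast$ it turns $\Vert u_1-u_2\Vert_\ast^{\alpha}\lve u_1-u_2\rve^{1-\alpha}$ into $T^{(1-\alpha)/2}\Vert u_1-u_2\Vert_{X_T}$. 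Collecting the numerical constants (crudely, and using $n\ge1$ to absorb the $1/n$) one obtains an estimate of the form \eqref{eqn-global Lipschitz-F}; the same computation, with $\theta_n\le1$, incidentally shows that $B_n^T$ does map $X_T$ into $L^2(0,T;\h)$.

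\textbf{Main obstacle.} The main obstacle is precisely the asymmetry of \eqref{eqn-local Lipschitz-F}: any decomposition that leaves a high power $\lve u_i(t)\rve^{p}$ not ``switched off'' by a cut-off fails, since on $X_T$ that factor is unbounded in terms of $n$ alone and would give at best a Lipschitz bound on bounded subsets of $X_T$. The norm-comparison splitting is exactly what forces every surviving factor to be either below $2n$ or a difference $u_1-u_2$ carried to a single power (so that linearity in $\Vert u_1-u_2\Vert_{X_T}$ is preserved); after that, the bookkeeping with Hölder's inequality is routine.
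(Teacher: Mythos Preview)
Your argument is correct and follows essentially the same strategy as the paper: decompose $B_n^T(u_1)-B_n^T(u_2)$ so that every surviving factor $\lve u_i(t)\rve^p$ is controlled by a cutoff, then estimate via \eqref{eqn-local Lipschitz-F} and H\"older in time. The only difference is cosmetic: the paper handles the asymmetry by assuming without loss of generality that $\tau_1:=\inf\{t:\lve u_1\rve_{X_t}\ge 2n\}\le\tau_2$ and using a single decomposition on $[0,\tau_2]$, whereas you split $[0,T]$ pointwise into $\{\lve u_1\rve_{X_t}\le\lve u_2\rve_{X_t}\}$ and its complement and swap the roles of $u_1,u_2$ on each piece---both devices accomplish exactly the same thing.
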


\begin{proof}
The proof is the same as in \cite{SLC-JAP}, but we repeat it here for sake of completeness. Note that by Assumption \ref{assum-F} $B_n^T(0)=0$. Assume that $u_1,u_2 \in X_T$. Denote,
for $i=1,2$,
\[
\tau_i= \inf\{t \in [0,T]: \Vert u_i \Vert_{X_t} \geq 2n\}.
\]
Note that by definition, if the set on the RHS above is empty,
then $\tau_i=T$.  Without loss of generality we may assume that $\tau_1\le \tau_2.$

We have the following chain of inequalities/equalities
\begin{eqnarray*}
\Vert B_n^T(u_1)-B_n^T(u_2)\Vert_{L^2(0,T;\h)} &=& \Big[ \int_0^T \vert  \theta_n( \Vert u_1 \Vert_{X_t}) F(u_1(t))-\theta_n( \Vert u_2 \Vert_{X_t}) F(u_2(t))\vert^2\,dt\Big]^{1/2}\\
&&\mbox{ because for } i=1,2, \;\; \theta_n( \Vert u_i
\Vert_{X_t}) =0 \mbox{ for } t \geq \tau_2
\\
&=& \Big[ \int_0^{\tau_2} \vert  \theta_n( \Vert u_1 \Vert_{X_t})
F(u_1(t))-\theta_n( \Vert u_2 \Vert_{X_t})
F(u_2(t))\vert^2\,dt\Big]^{1/2}
\\
&&\hspace{-5truecm}\lefteqn{= \Big[ \int_0^{\tau_2} \vert \big[
\theta_n( \Vert u_1 \Vert_{X_t}) - \theta_n( \Vert u_2
\Vert_{X_t}) \big] F(u_2(t))+ \theta_n( \Vert u_1
\Vert_{X_t})\big[  F(u_1(t))- F(u_2(t)) \Big]
\vert^2\,dt\Big]^{1/2} }
\\
& &\hspace{-3truecm}\lefteqn{ \leq
\Big[ \int_0^{\tau_2} \vert  \big[ \theta_n( \Vert u_1 \Vert_{X_t}) - \theta_n( \Vert u_2 \Vert_{X_t}) \big] F(u_2(t))\vert^2 \,dt \Big]^{1/2} }\\
&&\hspace{-2truecm}\lefteqn{+ \Big[ \int_0^{\tau_2} \vert
\theta_n( \Vert u_1 \Vert_{X_t})\big[  F(u_1(t))- F(u_2(t)) \big]
\vert^2\,dt\Big]^{1/2} =:I_1+I_2.}
\end{eqnarray*}
Next, since $\theta_n$ is Lipschitz with Lipschitz constant $n^{-1}$
we have
\begin{eqnarray*}
I_1^2&=& \int_0^{\tau_2} \vert  \big[ \theta_n( \Vert u_1 \Vert_{X_t}) - \theta_n( \Vert u_2 \Vert_{X_t}) \big] F(u_2(t))\vert^2\,dt\\
&\leq& n^{-2 }C^2 \int_0^{\tau_2} \big[ \vert
\Vert u_1 \Vert_{X_t} -  \Vert u_2 \Vert_{X_t} \vert \big]^2 \vert F(u_2(t))\vert^2 \,dt\\
&&\mbox{by Minkowski inequality}\\
&\leq& n^{-2} C^2 \int_0^{\tau_2}  \Vert
 u_1 -u_2 \Vert_{X_t}^2 \vert F(u_2(t))\vert^2 \,dt \leq  4n^2 C^2 \int_0^{\tau_2}  \Vert
 u_1 -u_2 \Vert_{X_T}^2 \vert F(u_2(t))\vert^2 \,dt \\
&\leq & n^{-2} C^2 \Vert
 u_1 -u_2 \Vert_{X_T}^2 \int_0^{\tau_2}   \vert F(u_2(t))\vert^2 \,dt.
\end{eqnarray*}
Next, by assumptions
\begin{eqnarray*}
\int_0^{\tau_2}   \lvert F(u_2(t))\lvert^2 \,dt  &\leq & C^2  \int_0^{\tau_2}  \lve  u(t)\lve^{2p+2-2\alpha}  \lVert  u(t)\lVert_\ast^{2\alpha} \,dt\\
&\leq & C^2 \sup_{t \in [0,\tau_2]} \lve u(t)\lve^{2p+2-2\alpha}
\big( \int_0^{\tau_2} \lVert u(t)\lVert_\ast^{2} \,dt\big)^\alpha
\tau_2^{1-\alpha}
\\ &\leq & C^2  \tau_2^{1-\alpha} \lVert u \lVert_{X_{\tau_2}}^{2p+2} \leq  C^2  \tau_2^{1-\alpha}(2n)^{2p+2}.
\end{eqnarray*}
Therefore,
\begin{eqnarray*}
I_1&\leq & C^2   \tau_2^{(1-\alpha)/2}(2n)^{p} \lVert  u_1 -u_2
\lVert_{X_T}.
\end{eqnarray*}

Also, because $ \theta_n( \lVert u_1 \lVert_{X_t}) =0$ for  $ t \geq
\tau_1$, and $\tau_1 \leq \tau_2$, we have
\begin{eqnarray*}
I_2&=& \Big[ \int_0^{\tau_2} \lvert  \theta_n( \lVert u_1 \lVert_{X_t})\big[  F(u_1(t))- F(u_2(t)) \big] \lvert^2\,dt \Big]^{1/2}\\
&=& \Big[ \int_0^{\tau_1} \lvert   \theta_n( \lVert u_1 \lVert_{X_t})\big[  F(u_1(t))- F(u_2(t)) \big] \lvert^2\,dt\Big]^{1/2}\\
&&\mbox{because } \theta_n( \lVert u_1 \lVert_{X_t}) \leq 1 \mbox{ for } t\in [0,\tau_1) \\
&\leq& \Big[ \int_0^{\tau_1} \lvert     F(u_1(t))- F(u_2(t))  \lvert^2\,dt\Big]^{1/2}\\
&\leq & C \Big[ \int_0^{\tau_1}     \lve u_1(t)-u_2(t)\lve^2 \lve u_1(t)\lve^{2p-2\alpha} \lVert u_1(t)\lVert_\ast^{2\alpha}   \,dt\Big]^{1/2}\\
&+& C \Big[ \int_0^{\tau_1}  \lVert u_1(t)-u_2(t)\lVert_\ast^{2\alpha} \lve u_1(t)-u_2(t)\lve^{2-2\alpha} \lve u_2(t)\lve^{2p} \,dt\Big]^{1/2} \\
&\leq&
C \sup_{t\in [0,\tau_1]} \lve u_1(t)-u_2(t)\lve^{1-\alpha}
\lve u_2(t)\lve^p \Big[ \int_0^{\tau_1} \lVert
u_1(t)-u_2(t)\lVert_\ast^{2\alpha} \,dt\Big]^{1/2}
\\
&+& C \sup_{t\in [0,\tau_1]} \lve u_1(t)-u_2(t)\lve \lve
u_1(t)\lve^{p-\alpha} \Big[ \int_0^{\tau_1} \lVert
u_1(t)\lVert_\ast^{2\alpha}   \,dt\Big]^{1/2}\\
&\leq&  C \sup_{t\in [0,T]} \lve u_1(t)-u_2(t)\lve \sup_{t\in
[0,\tau_1]} \lve u_1(t)\lve^{p-\alpha} \Big[ \int_0^{\tau_1}
\lVert u_1(t)\lVert_\ast^{2}   \,dt\Big]^{\alpha/2}
\tau_1^{(1-\alpha)/2}
\\
&+& C \sup_{t\in [0,T]} \lve u_1(t)-u_2(t)\lve^{1-\alpha}
\sup_{t\in [0,\tau_1]}  \lve u_2(t)\lve^p \Big[ \int_0^{\tau_1}
\lVert u_1(t)-u_2(t)\lVert_\ast^{2}   \,dt\Big]^{\alpha/2}
\tau_1^{(1-\alpha)/2}
\\
&\leq&  C \lVert u_1-u_2\lVert_{X_T}  \lVert u_1 \lVert_{X_{\tau_1}}^p \tau_1^{(1-\alpha)/2} +C \lVert u_1-u_2\lVert_{X_T}  \lVert u_2 \lVert_{X_{\tau_1}}^p \tau_1^{(1-\alpha)/2}\\
&&\mbox{ because} \lVert u_1 \lVert_{X_{\tau_1}} \leq 2n \mbox{ and } \lVert u_2 \lVert_{X_{\tau_1}} \leq \lVert u_2 \lVert_{X_{\tau_2}}\leq 2n\\
&\leq&   C \tau_1^{(1-\alpha)/2} \lVert u_1-u_2\lVert_{X_T} \Big[
\lVert u_1 \lVert_{X_{\tau_1}}^p  +   \lVert u_2
\lVert_{X_{\tau_1}}^p\Big] \leq C (2n)^{p+1} \tau_1^{(1-\alpha)/2}
\lVert u_1-u_2\lVert_{X_T}
\end{eqnarray*}
\del{Next, by assumptions
\begin{eqnarray*}
\int_0^{\tau_2}   \Vert F(u_2(t))\Vert_\ast^2 \,dt  &\leq & C^2  \int_0^{\tau_2} \lvert u_2\lvert^{2\alpha} \Vert u_2\Vert^{4-2\alpha}dt+C^2 \int_0^{\tau_2}
\lvert u_2\rvert^{2\lambda_2}\lve u_2\rve^{2-2\lambda_2} dt,\\
&\leq & C^2 \sup_{t \in [0,\tau_2]} \vert u_2(t)\vert^{2\alpha}
\Big( \int_0^{\tau_2} \Vert u_2(t)\Vert^{2} \,dt\Big)^{2-\alpha} \tau_2^{\alpha-1}\\
&& \quad \quad +C^2 \sup_{t \in [0,\tau_2]} \lvert u_2(t)\rvert^{2\lambda_2}
\Big( \int_0^{\tau_2} \Vert u_2(t)\Vert^{2} \,dt\Big)^{1-\lambda_1} \tau_2^{\lambda_2} \\
&\leq & C^2  \tau_2^{\alpha-1} \Vert u_2 \Vert_{X_{\tau_2}}^{2+\alpha} + C^2 \tau_2^{\lambda_2} \lve u_2\rve_{X_{\tau_2}}^{1+\lambda_2},\\
&\leq&   C^2  (2n)^{2+\alpha}[\tau_2^{\alpha-1}+\tau_2^{\lambda_2}]
\end{eqnarray*}
Therefore,
\begin{eqnarray*}
A&\leq & C^2   (2n)^{[2+\alpha]/2}(\tau_2^{\alpha-1}+\tau_2^{\lambda_2})^\frac12 \Vert  u_1 -u_2
\Vert_{X_T} .
\end{eqnarray*}

Also, because $ \theta_n( \Vert u_1 \Vert_{X_t}) =0$ for  $ t \geq
\tau_1$, and $\tau_1 \leq \tau_2$, we have
\begin{eqnarray*}
B&=& \Big[ \int_0^{\tau_2} \Vert   \theta_n( \Vert u_1 \Vert_{X_t})\big[  F(u_1(t))- F(u_2(t)) \big] \Vert_\ast^2\,dt \Big]^{1/2}\\
&=& \Big[ \int_0^{\tau_1} \Vert   \theta_n( \Vert u_1 \Vert_{X_t})\big[  F(u_1(t))- F(u_2(t)) \big] \Vert_\ast^2\,dt\Big]^{1/2}\\
&&\mbox{because } \theta_n( \Vert u_1 \Vert_{X_t}) \leq 1 \mbox{ for } t\in [0,\tau_1) \\
&\leq& \Big[ \int_0^{\tau_1} \Vert     F(u_1(t))- F(u_2(t))  \Vert_\ast^2\,dt\Big]^{1/2}\\
&\leq & C \Big[ \int_0^{\tau_1}    \vert u_1-u_2\vert^{2\lambda_1}  \Vert u_1-u_2\Vert^{2(1-\lambda_1)} \vert
u_1\vert^{2\lambda_2} \Vert u_1\Vert^{2(1-\lambda_2)} ds \Big]^\frac12 \\ &+& C \Big[\int_0^{\tau_1} \vert u_1-u_2\vert^{2\lambda_2}  \Vert u_1-u_2\Vert^{2(1-\lambda_2)} \vert
u_2\vert^{2\lambda_1} \Vert u_2\Vert^{2(1-\lambda_1)} \Big]^\frac12 \\
&&=: B_1+B_2.
\end{eqnarray*}
We first deal with $B_1$.
\begin{eqnarray*}
B_1^2 &\leq& C \sup_{t\in [0,\tau_1]} \vert u_1(t)-u_2(t)\vert^{2\lambda_1}\sup_{t\in [0,\tau_1]}  \vert
u_1(t)\vert^{2\lambda_2} \int_0^{\tau_1} \Vert u_1-u_2\Vert^{2(1-\lambda_1)}  \Vert u_1\Vert^{2(1-\lambda_2)} ds,\\
&\leq &  C \biggl[\sup_{t\in [0,\tau_1]} \vert u_1(t)-u_2(t)\vert^{2\lambda_1}\sup_{t\in [0,\tau_1]}  \vert
u_1(t)\vert^{2\lambda_2}\biggr] \\ & &  \quad \times \Big[\int_0^{\tau_1} \Vert u_1-u_2\Vert^{2} ds\Big]^{1-\lambda_1}
\Big[\int_0^{\tau_1}  \Vert u_1\Vert^{\frac{2(1-\lambda_2)}{\lambda_1}} ds\Big]^{\lambda_1},\\
&\leq& C \lve u_1-u_2\lve^2_{X_T} \lve u_1\lve^{2\lambda_2}_{X_{\tau_1}}\tau_1^{\alpha-1}\Big[\int_0^{\tau_1} \lVert u_1(s)\rVert^2 ds\Big]^{1-\lambda_2},\\
&\leq & C \lve u_1-u_2\lve^2_{X_T} \lve u_1\lve^{2}_{X_{\tau_1}}\tau_1^{\alpha-1}.
\end{eqnarray*}
Similar argument is used to show that
\begin{equation}
 B_2^2\leq  C \lve u_1-u_2\lve^2_{X_T} \lve u_2\lve^{2}_{X_{\tau_1}}\tau_1^{\alpha-1}.
\end{equation}
Therefore
\begin{equation}
 B^2\le  C \lve u_1-u_2\lve^2_{X_T} \tau_1^{\alpha-1}[\lve u_2\lve^{2}_{X_{\tau_1}}+\lve u_1\lve^{2}_{X_{\tau_1}}].
\end{equation}
Since $\Vert u_1 \Vert_{X_{\tau_1}} \leq 2n$ and $\Vert u_2 \Vert_{X_{\tau_1}} \leq \Vert u_2 \Vert_{X_{\tau_2}}\leq 2n$ we obtain
\begin{equation}
  B\le  C (2n)^{2} \tau_1^{(\alpha-1)/2}
\Vert u_1-u_2\Vert_{X_T}
\end{equation}
\del{
Similar argument is used to show that
\begin{eqnarray*}
&+& C \sup_{t\in [0,\tau_1]} \Vert u_1(t)-u_2(t)\Vert^{1-\alpha}
\Vert u_2(t)\Vert^p \Big[ \int_0^{\tau_1} \Vert
u_1(t)-u_2(t)\Vert^{2\alpha} \,dt\Big]^{1/2}
\\
&\leq&  C \sup_{t\in [0,T]} \Vert u_1(t)-u_2(t)\Vert \sup_{t\in
[0,\tau_1]} \Vert u_1(t)\Vert^{p-\alpha} \Big[ \int_0^{\tau_1}
\Vert u_1(t)\Vert^{2}   \,dt\Big]^{\alpha/2}
\tau_1^{(1-\alpha)/2}
\\
&+& C \sup_{t\in [0,T]} \Vert u_1(t)-u_2(t)\Vert^{1-\alpha}
\sup_{t\in [0,\tau_1]}  \Vert u_2(t)\Vert^p \Big[ \int_0^{\tau_1}
\Vert u_1(t)-u_2(t)\Vert^{2}   \,dt\Big]^{\alpha/2}
\tau_1^{(1-\alpha)/2}
\\
&\leq&  C \Vert u_1-u_2\Vert_{X_T}  \Vert u_1 \Vert_{X_{\tau_1}}^p \tau_1^{(1-\alpha)/2} +C \Vert u_1-u_2\Vert_{X_T}  \Vert u_2 \Vert_{X_{\tau_1}}^p \tau_1^{(1-\alpha)/2}\\
&&\mbox{ because} \Vert u_1 \Vert_{X_{\tau_1}} \leq 2n \mbox{ and } \Vert u_2 \Vert_{X_{\tau_1}} \leq \Vert u_2 \Vert_{X_{\tau_2}}\leq 2n\\
&\leq&   C \tau_1^{(1-\alpha)/2} \Vert u_1-u_2\Vert_{X_T} \Big[
\Vert u_1 \Vert_{X_{\tau_1}}^p  +   \Vert u_2
\Vert_{X_{\tau_1}}^p\Big] \leq C (2n)^{p+1} \tau_1^{(1-\alpha)/2}
\Vert u_1-u_2\Vert_{X_T}.
\end{eqnarray*}}}
Summing up, we proved
\begin{eqnarray*}
\Vert B_n^T(u_1)-B_n^T(u_2)\Vert_{L^2(0,T;\h)} &\leq &\Big[ C^2
\tau_2^{(1-\alpha)/2}(2n)^{p}
+C (2n)^{p+1} \tau_1^{(1-\alpha)/2}\Big] \Vert u_1-u_2\Vert_{X_T}\\
&=& C (2n)^{p}\big[   2n C +1\Big]  \tau_2^{(1-\alpha)/2} \Vert
u_1 -u_2 \Vert_{X_T}
\end{eqnarray*}
The proof is complete.

\end{proof}
\del{
\begin{prop}\label{assum-G}
Let $G$ be a nonlinear mapping satisfying Assumption
\ref{assum-G}. Define a map $\Phi_{G}^n$ by
\begin{equation}\label{eqn-Phi_G}
\Phi_{G}=\Phi_{G}^n=\Phi_{G,T}^n: X_T \ni u \mapsto \theta_n(
\vert u \vert_{X_\cdot}) G(u) \in L^2(0,T;\mathcal{J}_2(K,H)).
\end{equation}
Then $\Phi_{G,T}^n$ is globally Lipschitz and moreover, for any
$u_1,u_2 \in X_T$,
\begin{equation}\label{eqn-global LipschitzG}
\Vert \Phi_{G,T}^n(u_1)-\Phi_{G,T}^n(u_2)\Vert_{L^2(0,T;\mathcal{J}_2)} \leq
 C_G (2n)^{p+1}\big[   2n C_G +1\big]  T^{(1-\beta)/2} \Vert  u_1
-u_2 \Vert_{X_T},
\end{equation}
where we have set $\mathcal{J}_2:=\mathcal{J}_2(K,H)$ to simplify notation.
\end{prop}

\begin{proof}
The proof is quite similar to the previous proposition, but we give the details for sake of completeness.
Note that $\Phi_G(0)=0$. Assume that $u_1,u_2 \in X_T$. Denote,
for $i=1,2$,
\[
\tau_i= \inf\{t \in [0,T]: \lve u_i \lve_{X_t} \geq 2n\}.
\]
Note that by definition, if the set on the RHS above is empty,
then $\tau_i=T$.

Without loss of generality we can assume that $\tau_1 \leq
\tau_2$. We have the following chain of inequalities/equalities
\begin{eqnarray*}
\lve \Phi_G(u_1)-\Phi_G(u_2)\lve_{L^2(0,T;\mathcal{J}_2)} &=& \Big[ \int_0^T \lve  \theta_n( \lve u_1 \lve_{X_t}) G(u_1(t))-\theta_n( \lve u_2 \lve_{X_t}) G(u_2(t))
\lve_{\mathcal{J}_2}^2\,dt\Big]^{1/2}\\
&&\mbox{ because for } i=1,2, \;\; \theta_n( \lve u_i
\lve_{X_t}) =0 \mbox{ for } t \geq \tau_2
\\
&=& \Big[ \int_0^{\tau_2} \lve  \theta_n( \lve u_1 \lve_{X_t})
G(u_1(t))-\theta_n( \lve u_2 \lve_{X_t})
G(u_2(t))\lve_{\mathcal{J}_2}^2\,dt\Big]^{1/2}
\\
&&\hspace{-5truecm}\lefteqn{= \Big[ \int_0^{\tau_2} \lve \big[
\theta_n( \lve u_1 \lve_{X_t}) - \theta_n( \lve u_2
\lve_{X_t}) \big] G(u_2(t))+ \theta_n( \lve u_1
\lve_{X_t})\big[  G(u_1(t))- G(u_2(t)) \Big]
\lve_{\mathcal{J}_2}^2\,dt\Big]^{1/2} }
\\
& &\hspace{-3truecm}\lefteqn{ \leq
\Big[ \int_0^{\tau_2} \lve  \big[ \theta_n( \lve u_1 \lve_{X_t}) - \theta_n( \lve u_2 \lve_{X_t}) \big] G(u_2(t))\lve_{\mathcal{J}_2}^2 \,dt \Big]^{1/2} }\\
&&\hspace{-2truecm}\lefteqn{+ \Big[ \int_0^{\tau_2} \lve
\theta_n( \lve u_1 \lve_{X_t})\big[  G(u_1(t))- G(u_2(t)) \big]
\lve_{\mathcal{J}_2}^2\,dt\Big]^{1/2} =:A+B}
\end{eqnarray*}
Next, since $\theta_n$ is Lipschitz with Lipschitz constant $2n$
we have
\begin{eqnarray*}
A^2&=& \int_0^{\tau_2} \lve \big[ \theta_n( \lve u_1 \lve_{X_t}) - \theta_n( \lve u_2 \lve_{X_t}) \big] G(u_2(t))\lve_{\mathcal{J}_2}^2 \,dt\\
&\leq& 4n^2 C_G^2 \int_0^{\tau_2} \big[ \lvert\,\,\,
\lve u_1 \lve_{X_t} -  \lve u_2 \lve_{X_t}\rvert \big]^2 \lve G(u_2(t))\lve_{\mathcal{J}_2}^2 \,dt\\
&&\mbox{by Minkowski inequality}\\
&\leq& 4n^2 C_G^2 \int_0^{\tau_2}  \lve
 u_1 -u_2 \lve_{X_t}^2 \lve G(u_2(t))\lve_{\mathcal{J}_2}^2 \,dt \leq  4n^2 C_G^2 \int_0^{\tau_2}  \lve
 u_1 -u_2 \lve_{X_T}^2 \lve G(u_2(t))\lve_{\mathcal{J}_2}^2 \,dt \\
&\leq & 4n^2 C_G^2 \lve
 u_1 -u_2 \lve_{X_T}^2 \int_0^{\tau_2}   \lve G(u_2(t))\lve_{\mathcal{J}_2}^2 \,dt
\end{eqnarray*}
Next, by assumptions
\begin{eqnarray*}
\int_0^{\tau_2}   \lve G(u_2(t))\lve_{\mathcal{J}_2}^2 \,dt  &\leq & C_G^2  \int_0^{\tau_2}  \lvert  \bu(t)\lvert^{2p+2-2\beta}  \lve  \bu(t)\rve^{2\beta} \,dt\\
&\leq & C_G^2 \sup_{t \in [0,\tau_2]} \lvert \bu(t)\lvert^{2p+2-2\beta}
\big( \int_0^{\tau_2} \lve \bu(t)\rve^{2} \,dt\big)^\beta
\tau_2^{1-\beta}
\\ &\leq & C_G^2  \tau_2^{1-\beta} \lve u \lve_{X_{\tau_2}}^{2p+2} \leq  C_G^2  \tau_2^{1-\beta}(2n)^{2p+2}
\end{eqnarray*}
Therefore,
\begin{eqnarray*}
A&\leq & C_G   \tau_2^{(1-\beta)/2}(2n)^{p+2} \lve  u_1 -u_2
\lve_{X_T} .
\end{eqnarray*}

Also, because $ \theta_n( \lve u_1 \lve_{X_t}) =0$ for  $ t \geq
\tau_1$, and $\tau_1 \leq \tau_2$, we have
\begin{eqnarray*}
B&=& \Big[ \int_0^{\tau_2} \lve   \theta_n( \lve u_1 \lve_{X_t})\big[  G(u_1(t))- G(u_2(t)) \big] \lve_{\mathcal{J}_2}^2\,dt \Big]^{1/2}\\
&=& \Big[ \int_0^{\tau_1} \lve   \theta_n( \lve u_1 \lve_{X_t})\big[  G(u_1(t))- G(u_2(t)) \big] \lve_{\mathcal{J}_2}^2\,dt\Big]^{1/2}\\
&&\mbox{because } \theta_n( \lve u_1 \lve_{X_t}) \leq 1 \mbox{ for } t\in [0,\tau_1) \\
&\leq& \Big[ \int_0^{\tau_1} \lve    G(u_1(t))- G(u_2(t))  \lve_{\mathcal{J}_2}^2\,dt\Big]^{1/2}\\
&\leq & C_G \Big[ \int_0^{\tau_1}     \lvert u_1(t)-u_2(t)\lvert^2 \lvert u_1(t)\lvert^{2p-2\beta} \lve u_1(t)\rve^{2\beta}   \,dt\Big]^{1/2}\\
&+& C_G \Big[ \int_0^{\tau_1}  \lve u_1(t)-u_2(t)\rve^{2\beta} \lvert u_1(t)-u_2(t)\lvert^{2-2\beta} \lvert u_2(t)\lvert^{2p} \,dt\Big]^{1/2} \\
&\leq& C_G \sup_{t\in [0,\tau_1]} \lvert u_1(t)-u_2(t)\lvert \lvert
u_1(t)\lvert^{p-\beta} \Big[ \int_0^{\tau_1} \lve
u_1(t)\rve^{2\beta}   \,dt\Big]^{1/2}
\end{eqnarray*}
\begin{eqnarray*}
&+& C_G \sup_{t\in [0,\tau_1]} \lvert u_1(t)-u_2(t)\lvert^{1-\beta}
\lvert u_2(t)\lvert^p \Big[ \int_0^{\tau_1} \lve
u_1(t)-u_2(t)\rve^{2\beta} \,dt\Big]^{1/2}
\\
&\leq&  C_G \sup_{t\in [0,T]} \lvert u_1(t)-u_2(t)\lvert \sup_{t\in
[0,\tau_1]} \lvert u_1(t)\lvert^{p-\beta} \Big[ \int_0^{\tau_1}
\lve u_1(t)\rve^{2}   \,dt\Big]^{\beta/2}
\tau_1^{(1-\beta)/2}
\\
&+& C_G \sup_{t\in [0,T]} \lvert u_1(t)-u_2(t)\lvert^{1-\beta}
\sup_{t\in [0,\tau_1]}  \lvert u_2(t)\lvert^p \Big[ \int_0^{\tau_1}
\lve u_1(t)-u_2(t)\rve^{2}   \,dt\Big]^{\beta/2}
\tau_1^{(1-\beta)/2}
\\
&\leq&  C_G \lve u_1-u_2\lve_{X_T}  \lve u_1 \lve_{X_{\tau_1}}^p \tau_1^{(1-\beta)/2} +C_G \lve u_1-u_2\lve_{X_T}  \lve u_2 \lve_{X_{\tau_1}}^p \tau_1^{(1-\beta)/2}\\
&&\mbox{ because} \lve u_1 \lve_{X_{\tau_1}} \leq 2n \mbox{ and } \lve u_2 \lve_{X_{\tau_1}} \leq \lve u_2 \lve_{X_{\tau_2}}\leq 2n\\
&\leq&   C_G \tau_1^{(1-\beta)/2} \lve u_1-u_2\lve_{X_T} \Big[
\lve u_1 \lve_{X_{\tau_1}}^p  +   \lve u_2
\lve_{X_{\tau_1}}^p\Big] \leq C_G (2n)^{p+1} \tau_1^{(1-\beta)/2}
\lve u_1-u_2\lve_{X_T}
\end{eqnarray*}
Summing up, we proved

\begin{equation*}
\begin{split}
\lve \Phi_G(u_1)-\Phi_G(u_2)\lve_{L^2(0,T;\mathcal{J}_2)} \leq  C_G^2
\tau_2^{(1-\beta)/2}(2n)^{p+2} \lve  u_1 -u_2 \lve_{X_T}
 \\ +C_G (2n)^{p+1} \tau_1^{(1-\beta)/2} \lve u_1-u_2\lve_{X_T},\\
=C_G (2n)^{p+1}\big[   2n C_G +1\Big]  \tau_2^{(1-\beta)/2} \lve
u_1 -u_2 \lve_{X_T}.
\end{split}
\end{equation*}
The proof is complete.
\end{proof}}

\section{Existence of Maximal local and Global solution of Eq. \eqref{Full-EQ} }\label{MAX-GLO}
This section is devoted to the solvability of \eqref{Full-EQ}. We will mainly show that under  Assumption \ref{lin-terms}-\ref{assum-G},  Eq. \eqref{Full-EQ} admits a unique maximal local solution.
 Under additional conditions on $F$ and $G$ we prove that this maximal local solution turns out to be a global solution.
 The results are obtained by use of cut-off and fixed point arguments.

 \subsection{Global solution of a truncated equation}
For simplicity we set $B_n^T(u)(s)=B_n^T(u(s))$ for any $u\in X_T$ and $s\ge 0$. Let
\begin{equation}\label{Truncated-Eq}
  \bu_n(t)+\int_0^t [A\bu_n(s)+B_n^T(\bu_n(s))]ds=\bu_0 +\int_0^t \int_Z G(z,\bu_n(s))\wie(dz,ds), \quad t\in [0,T],
 \end{equation}
which is understood as
\begin{equation}\label{Truncated-Eq-2}
  \langle \bu_n(t), v\rangle +\int_0^t \langle A\bu_n(s)+B_n^T(\bu_n(s)), v\rangle ds=\langle \bu_0, v \rangle +\int_0^t \int_Z \langle G(z,\bu_n(s)), v\rangle
  \wie(dz,ds), \quad t\in [0,T],
 \end{equation}
for any $v\in \h$. Here,  we previously set $$B_n^T(u(t))=\theta_n(\lVert u\rVert_{X_t})F(u(t)),$$ for any $u\in X_t$ and $t\ge0$.
For $n \in \mathbb{N}$ we also set
\begin{equation}\label{phin}
 \phi(n)=C^2 (2n)^{2p}\Big[   2n C +1\Big]^2 .
\end{equation}
Now, let $\bv \in M^2(X_T)$, $n>0$ and let us consider the linear stochastic evolution equation
\begin{equation}\label{lin-eq-1}
\begin{cases}
 d\bu_n(t) + A\bu_n(t)dt=-B_n^t(\bv(t))dt+\int_Z G(z,\bv(t)) \wie(dz,dt),\\
 \bu_n(0)=u_0.
\end{cases}
\end{equation}
Thanks to Theorem \ref{Appendix-LIN} for each  $\bv \in M^2(X_T)$ and $n\ge 1$, there exists a unique $\ve$-valued progressively measurable  process
$\bu_n$ solving \eqref{lin-eq-1}. Moreover,
$\bu^n \in \mathbf{D}(0,T;\ve)\cap
L^2(0,T; \be)$ with probability 1.
\begin{lem}\label{Lambda-ITS}
For each $n\ge 1$ let $\Lambda_n$ be the mapping defined by $$\Lambda_n: M^2(X_T)\ni \bv
\mapsto \bu_n=\Lambda_n(\bv),$$ where $\bu_n$ is the unique solution to \eqref{lin-eq-1}.
 For any $\bv\in M^2(X_T)$, the stochastic process $\bu_n$ belongs to $M^2(X_T)$.
\end{lem}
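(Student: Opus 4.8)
The plan is to apply the It\^o formula to the functional $t\mapsto\langle N\bu_n(t),\bu_n(t)\rangle$ --- the natural energy functional here, since by Assumption \ref{lin-terms} it is comparable to $\lve\bu_n(t)\rve^2$ --- and to recover $\int_0^T\lve\bu_n(s)\rve_\ast^2\,ds$ from the coercivity $\langle A\bu_n,N\bu_n\rangle\ge C_A\lve\bu_n\rve_\ast^2$ after absorbing half of it into the left-hand side. First I would check that the two forcing terms of \eqref{lin-eq-1} lie in the right spaces. Because $\theta_n(\lVert\bv\rVert_{X_t})=0$ whenever $\lVert\bv\rVert_{X_t}\ge 2n$, exactly the computation carried out in the proof of Proposition \ref{prop-global Lipschitz-F} (using $F(0)=0$ together with \eqref{eqn-local Lipschitz-F}) yields the \emph{deterministic} bound
\[
 \int_0^T|B_n^s(\bv(s))|^2\,ds\ \le\ C^2\,(2n)^{2p+2}\,T^{1-\alpha}\ =:\ \kappa_n,\qquad\text{a.s., for every }\bv\in M^2(X_T),
\]
while \eqref{Lipschitz-G-2} with $p=1$ gives $\EE\int_0^T\lVert G(\bv(s))\rVert^2_{L^2(Z,\nu,\ve)}\,ds\le\tilde{\ell}_1\,T\,(1+\lVert\bv\rVert^2_{M^2(X_T)})<\infty$, so $G(\cdot,\bv(\cdot))\in\mathcal{M}^2(0,T,L^2(Z,\nu,\ve))$.

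Next I would write down the energy identity. By the regularity $\bu_n\in\mathbf{D}(0,T;\ve)\cap L^2(0,T;\be)$ a.s.\ provided by Theorem \ref{Appendix-LIN}, the It\^o formula applies to $\langle N\bu_n,\bu_n\rangle$ (cf.\ the appendix and \cite{Brz+Haus_2009}); using that $N$ is self-adjoint, so that at a jump time $s$ this functional jumps by $2\langle N\bu_n(s^-),\Delta\bu_n(s)\rangle+\langle N\Delta\bu_n(s),\Delta\bu_n(s)\rangle$ with $\Delta\bu_n(s)$ the jump of $\bu_n$, one obtains
\[
 \langle N\bu_n(t),\bu_n(t)\rangle+2\!\int_0^t\!\langle A\bu_n,N\bu_n\rangle\,ds
 =\langle Nu_0,u_0\rangle-2\!\int_0^t\!\langle B_n^s(\bv),N\bu_n\rangle\,ds+\int_0^t\!\!\int_Z\!\langle NG(z,\bv(s)),G(z,\bv(s))\rangle\,\nu(dz)\,ds+M(t),
\]
where $M$ is a purely discontinuous local martingale with $M(0)=0$. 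Estimating $|\langle B_n^s(\bv),N\bu_n\rangle|\le\noe\,|B_n^s(\bv)|\,\lve\bu_n\rve_\ast\le\tfrac{C_A}{2}\lve\bu_n\rve_\ast^2+\tfrac{\noe^2}{2C_A}|B_n^s(\bv)|^2$ and $\langle NG,G\rangle\le\nov\lVert G\rVert_{\ve}^2$, and using $\langle A\bu_n,N\bu_n\rangle\ge C_A\lve\bu_n\rve_\ast^2$, $\langle Nu_0,u_0\rangle\le\nov\lve u_0\rve^2$ and $\langle N\bu_n,\bu_n\rangle\ge C_N\lve\bu_n\rve^2$, this reduces to
\[
 \langle N\bu_n(t),\bu_n(t)\rangle+C_A\!\int_0^t\!\lve\bu_n(s)\rve_\ast^2\,ds\ \le\ W+M(t),\qquad W:=\nov\lve u_0\rve^2+\tfrac{\noe^2}{C_A}\kappa_n+\nov\!\int_0^T\!\lVert G(\bv(s))\rVert^2_{L^2(Z,\nu,\ve)}\,ds,
\]
with $\EE W<\infty$ (recall $u_0\in L^2(\Omega,\mathcal{F}_0;\ve)$) by the first step.

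The main obstacle --- and essentially the only place where the jump structure matters --- is the control of $M$: the $L^2$ bound \eqref{ineq-2.03} cannot be used here, since the contribution of the quadratic term $\langle NG,G\rangle$ to $[M]$ is of order $\lVert G(\bv(s))\rVert^4_{L^4(Z,\nu,\ve)}$ and its expected time-integral would require a fourth moment of $\bv$ that $M^2(X_T)$ does not provide. I would circumvent this by a localisation combined with the Davis/Burkholder--Davis--Gundy inequality for purely discontinuous martingales and the elementary pointwise bound $\lVert\Delta\bu_n(s)\rVert_{\ve}\le 2\sup_{r\le T}\lve\bu_n(r)\rve$, which pulls one factor $\sup\lve\bu_n\rve$ out of the quadratic-variation sum so that only the (already controlled) $L^2$-growth of $G$ survives. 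Concretely, set $\tau_k=\inf\{t\in[0,T]:\lve\bu_n(t)\rve\ge k\}\wedge T$; then $\tau_k\uparrow T$ a.s., and optional stopping in the displayed inequality (along a reducing sequence for $M$, then Fatou) gives $\EE\langle N\bu_n(\tau_k),\bu_n(\tau_k)\rangle\le\EE W$, whence $\EE\sup_{t\le\tau_k}\langle N\bu_n(t),\bu_n(t)\rangle\le\nov k^2+\EE W<\infty$ for each fixed $k$. Writing $P_{\tau_k}:=\sum_{s\le\tau_k}\lVert\Delta\bu_n(s)\rVert^2_{\ve}=\int_0^{\tau_k}\!\int_Z\lVert G(z,\bv(s))\rVert^2_{\ve}\,\eta(dz,ds)$, so that $\EE P_{\tau_k}\le\EE\int_0^T\lVert G(\bv(s))\rVert^2_{L^2(Z,\nu,\ve)}\,ds<\infty$, the jump bound gives $[M]_{\tau_k}\le 16\,\nov^2\,(\sup_{r\le\tau_k}\lve\bu_n(r)\rve^2)\,P_{\tau_k}$, and hence, after a Young inequality and $\lve\bu_n(r)\rve^2\le C_N^{-1}\langle N\bu_n(r),\bu_n(r)\rangle$,
\[
 \EE\sup_{t\le\tau_k}|M(t)|\ \le\ C\,\EE\big([M]_{\tau_k}\big)^{1/2}\ \le\ \tfrac12\,\EE\sup_{t\le\tau_k}\langle N\bu_n(t),\bu_n(t)\rangle+C'\,\EE P_{\tau_k}.
\]
Inserting this into $\sup_{t\le\tau_k}\langle N\bu_n,\bu_n\rangle\le W+\sup_{t\le\tau_k}|M(t)|$ and absorbing --- legitimate because the left-hand side is integrable for each fixed $k$ --- yields $\EE\sup_{t\le\tau_k}\langle N\bu_n(t),\bu_n(t)\rangle\le 2\,\EE W+2C'\,\EE\int_0^T\lVert G(\bv(s))\rVert^2_{L^2(Z,\nu,\ve)}\,ds$, a bound \emph{independent of $k$}; the coercivity part similarly gives $C_A\,\EE\int_0^{\tau_k}\lve\bu_n(s)\rve_\ast^2\,ds\le\EE W$. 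Letting $k\to\infty$ and using Fatou's lemma for the supremum and monotone convergence for the time integral, one gets $\EE\big[\sup_{t\le T}\lve\bu_n(t)\rve^2+\int_0^T\lve\bu_n(s)\rve_\ast^2\,ds\big]<\infty$; since $\bu_n$ is progressively measurable (Theorem \ref{Appendix-LIN}) and $\ve\subset\h$, this is precisely the claim $\bu_n\in M^2(X_T)$.
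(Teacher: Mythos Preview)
Your argument is correct, but it follows a genuinely different route from the paper's. The paper also applies It\^o's formula to $\Psi(\bu_n)=\langle N\bu_n,\bu_n\rangle$ and handles the $B_n^T$-term and the compensator $I_1$ exactly as you do; the difference lies in the treatment of the martingale. Rather than localising, the paper \emph{splits} the jump increment $\Psi(\bu_n(s-)+G)-\Psi(\bu_n(s-))=2\langle N\bu_n(s-),G\rangle+\langle NG,G\rangle$ into two stochastic integrals $I_{2,1}$ and $I_{2,2}$ and applies BDG to each separately. For $I_{2,1}$ one pulls out $\sup\lve\bu_n\rve$ and is left with the $L^2(Z,\nu,\ve)$-norm of $G$; for the quadratic piece $I_{2,2}$ one gets $\me\big[\int_0^t\lVert G(\bv(s))\rVert^4_{L^4(Z,\nu,\ve)}\,ds\big]^{1/2}$, and here the paper invokes the $p=2$ case of \eqref{Lipschitz-G-2}: thanks to the square root coming from BDG (as opposed to the isometry \eqref{ineq-2.03}), only $\me\sup_s\lve\bv(s)\rve^2$ is needed, so fourth moments of $\bv$ are \emph{not} required. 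Your remark that a direct estimate of $[M]$ would need fourth moments of $\bv$ is therefore accurate for \eqref{ineq-2.03} but not for BDG. In return, your pathwise bound $\lVert\Delta\bu_n(s)\rVert\le 2\sup_r\lve\bu_n(r)\rve$ combined with localisation is a neat device that dispenses with the $p=2$ hypothesis in Assumption~\ref{assum-G}(\ref{part-i}) altogether and uses only the $L^2$-growth of $G$; the price is the extra stopping-time layer and the Fatou/monotone-convergence passage to the limit. Both approaches are valid; the paper's is shorter under the standing hypotheses, yours is more economical in what it assumes on $G$.
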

\begin{proof}
\del{ To ease notation we will omit the dependence on $n$.}
Let $\Psi: \h \to \mathbb{R}$ be the
mapping defined by $$\Psi(u)=\langle u, Nu  \rangle,$$ for any $u\in \h$. This mapping is Fr\'echet differentiable with first derivative defined by
\begin{equation*}
\Psi^\prime(u)[h]=\langle h, N u\rangle+ \langle u,
Nh\rangle.
\end{equation*}
Since $N$ is self-adjoint we have
\begin{equation*}
\Psi^\prime(u)[h]=2\langle h, Nu\rangle.
\end{equation*}

Applying It\^o's formula (see, for instance, \cite[Appendix D]{Pes+Zab}) to $\Psi(\bu)$ with \eqref{lin-eq-1} we obtain
\begin{equation}\label{ITO-1}
\begin{split}
 & \Psi(\bu_n(t))-\Psi(\bu_0) +2\int_0^t\langle A\bu_n(s)+ B_n^T(\bv(s)), N\bu_n(s)\rangle ds\\
 & \quad \quad = \int_0^t \int_Z \biggl[\Psi(\bu_n(s-)+G(z,\bv(s)))-\Psi(\bu_n(s-))-\Psi^\prime(\bu_n(s-))[G(z,\bv(s))] \biggr]\nu(dz)ds\\
 & \quad \quad \quad + \int_0^t \int_Z\biggl[\Psi(\bu_n(s-)+G(z,\bv(s))-\Psi(\bu_n(s-))\biggr]\wie(dz,ds).
 \end{split}
\end{equation}
From the Cauchy-Schwarz inequality we derive that
\begin{align*}
 \biggl\lvert \int_0^t \langle B_n^T(\bv(s)), N\bu_n(s) \rangle ds\biggr\lvert \le & \int_0^t \lvert B_n^T(\bv(s))\rvert \lvert N\bu_n(s) \rvert ds, \\
 \le & \noe \int_0^t \lvert B_n^T(\bv(s))\rvert \lve \bu_n(s)\rve_\ast ds.
\end{align*}
From the last line along with Cauchy's inequality with $\varepsilon$ we deduce that
\begin{equation*}
 \me \biggl\lvert \int_0^t \langle B_n^T(\bv(s)), N\bu_n(s) \rangle ds\biggr\lvert \le \eps \me \int_0^t\Vert \bu_n(s)\Vert_\ast^2 ds+
 \frac{\noe^2}{4\eps} \me\int_0^t \lvert B_n^T(\bv(s))\rvert^2 ds.
\end{equation*}
Now invoking Eq. \eqref{eqn-global Lipschitz-F} from Proposition \ref{prop-global Lipschitz-F} we infer that
\begin{equation}\label{ineq-NL-te}
 \me \biggl\lvert \int_0^t \langle B_n^T(\bv(s)), N\bu_n(s) \rangle ds\biggr\lvert \le \eps \me \int_0^t\Vert \bu_n(s)\Vert^2_\ast ds+
 \frac{\noe^2 \phi(n)}{4\eps} t^{\alpha-1} \lve \bv\rve^2_{M^2(X_T)},
\end{equation}
where $\phi(n)$ is defined in \eqref{phin}.

Now, note that
$$ \Psi(u+h)-\Psi(u)-\Psi^\prime(u)[h]=\langle Nh, h \rangle.$$
Hence
\begin{align}
I_1:=  \me \biggl \lvert \int_0^t \int_Z \biggl[\Psi(\bu_n(s-)+G(z,\bv(s)))-\Psi(\bu_n(s-))-\Psi^\prime(\bu_n(s-))[G(z,\bv(s))]
\biggr]\nu(dz)ds\biggr\rvert
\nonumber \\
\le \nov \EE
\int_0^t \int_Z \lVert G(z,\bv(s))\rVert^2 \nu(dz) ds.
\end{align}
By making use of \eqref{Lipschitz-G-2} we easily derive from the last inequality that
\begin{equation}
 I_1 \le  t \nov \tilde{\ell}_1 (1+\lve \bv\rve^2_{M^2(X_T)}). \label{ineq-ST-co}
\end{equation}

Notice also that
$$\Psi(u+h)-\Psi(u)= 2\langle Nu, h\rangle + \langle Nh, h\rangle,$$
thus
\begin{equation*}
 \begin{split}
  I_2:= &\EE \sup_{s\in [0,t]}\biggl\lvert \int_0^s \int_Z\biggl[\Psi(\bu_n(r-)+G(z,\bv(r))-\Psi(\bu_n(r-))\biggr]\wie(dz,dr) \biggr\lvert
  \\
  & \le
  \EE \sup_{s\in [0,t]}\biggl\lvert \int_0^s \int_Z \langle N\bu_n(s-), G(z,\bv(s))\rangle \wie(dz,ds)\biggr\lvert\\
  & \quad \quad  + \EE \sup_{s\in [0,t]}\biggl\lvert \int_0^s \int_Z \langle NG(z,\bv(s)), G(z,\bv(s))\rangle \wie(dz,ds)\biggr\lvert\\
  & \le I_{2,1}+I_{2,2}.
 \end{split}
\end{equation*}
Owing to the BDG inequality (see, for instance, \cite[Theorem 48]{Protter}) we infer that
\begin{align}
 I_{2,1}:=& \EE \sup_{s\in [0,t]}\biggl\lvert \int_0^s \int_Z \langle N\bu_n(r-), G(z,\bv(r))\rangle \wie(dz,dr)\biggr\lvert\nonumber \\
 & \le
  C \me \biggl[\int_0^t \int_Z \langle N G(z,\bv(s)), \bu_n(s-) \rangle^2 ds \biggr]^\frac12,\nonumber\\
 & \le  C \nov \me\biggl[\sup_{s\in [0,t]}\lve \bu_n(s)\rve \biggl(\int_0^t \int_Z \lve G (z,\bv(s))\rve ^2 \nu(dz) ds\biggr)^\frac12\biggr]\nonumber\\
 & \mbox{(by the Young inequality with $\delta>0$ arbitrary) }\nonumber\\
 \le & \delta \me\biggl[\sup_{s\in [0,t]}\lve\bu_n(s)\rve^2\biggr]+\frac{C^2 \nov^2 }{4\delta}
 \me \int_0^t \int_Z \lve G (z,\bv(s))\rve^2 \nu(dz) ds\nonumber\\
 & \mbox{(by the inequality \eqref{Lipschitz-G-2})}\nonumber\\
 I_{2,1}\le & \delta \me\biggl[\sup_{s\in [0,t]}\lve \bu_n(s)\rve^2\biggr]+\frac{C^2 \nov^2 \tilde{\ell}_1 t}{4\delta}(1+ \lve \bv\rve^2_{M^2(X_T)}).\label{ineq-ST-te}
\end{align}
Using again the BDG inequality yields
\begin{align}
 I_{2,2}& := \EE \sup_{s\in [0,t]}\biggl\lvert \int_0^s \int_Z \langle NG(z,\bv(s)), G(z,\bv(s))\rangle \wie(dz,ds)\biggr\lvert\nonumber\\
 & \le C  \me \biggl[\int_0^t \int_Z [\langle N G(z,\bv(s)), G(z,\bv(s)\rangle]^2\nu(dz) ds \biggr]^\frac12\nonumber\\
 & \le C \nov \Lve \me\biggl[\int_0^t\int_Z \lve G(z,\bv(s))\rve^4 \nu(dz) ds\Biggr]^\frac12\nonumber \\
 & \mbox{(by the inequality \eqref{Lipschitz-G-2})}\nonumber\\
 & \le C \nov \tilde{\ell}_2 t (1+\me\sup_{s\in [0,t]}\lve \bv(s)\rve^2),\nonumber\\
 I_{2,2} &\le C \nov \tilde{\ell}_2 t (1+\Vert \bv\Vert^2_{M^2(X_T)}).\label{ineq-ST-te-2}
\end{align}
Now it follows from Eqs. \eqref{ITO-1}, \eqref{ineq-NL-te}, \eqref{ineq-ST-co}, \eqref{ineq-ST-te} and \eqref{ineq-ST-te-2} that
\begin{equation*}
\begin{split}
  \me \sup_{s\in [0,t]} \Psi(\bu_n(s))-\Psi(\bu_0) +2\me \int_0^t \langle A \bu_n(s), N \bu_n(s)\rangle ds\le
 2C(\Lve N\Lve,\delta,\eps ,n, t) (1+\lve \bv\rve^2_{M^2(X_T)})\\
 + \eps \me \int_0^t \lve \bu_n(s)\rve_\ast^2 ds +\delta \me\biggl[\sup_{s\in [0,t]}\lve \bu_n(s)\rve^2\biggr],
\end{split}
\end{equation*}
where
\begin{align*}
\Lve N\Lve&:=\max\left(\noe, \nov\right),\\
C(\Lve N\Lve,\eps, \delta,n, t)&:= \biggl( \frac{\Lve N\Lve \phi(n) [t^{\alpha-1}}{4\eps} +t \left[\frac{C^2 \Lve N \Lve \tilde{\ell}_1]}{4\delta}+\tilde{\ell}_1
 +C \tilde{\ell}_2\right]  \biggr)\Lve N\Lve .
 \end{align*}
Since $\langle u, N u\rangle \ge C_N \lve u \rve^2$ and $\langle A u, Nu \rangle \ge C_A\lve u \rve_\ast^2$, it follows that
\begin{equation*}
\begin{split}
 (C_N-\delta) \me\biggl[\sup_{s\in [0,t]}\lve \bu_n(s)\rve^2\biggr]+(2C_A-\eps) \me \int_0^t \lve \bu_n(s)\rve_\ast^2 ds\le
 C(\Lve N\Lve,\eps,\delta, n, t) (1+\lve \bv\rve^2_{M^2(X_T)})
 \\ + \Psi(\bu_0).
 \end{split}
\end{equation*}
Choosing $\eps=C_A$ and $\delta=C_N/2$, we derive from the last inequality that
\begin{equation*}
 \me\biggl[\sup_{s\in [0,t]}\lve \bu_n(s)\rve^2\biggr]+\me \int_0^t \lve \bu_n(s)\rve_\ast^2 ds \le  \frac{\Psi(\bu_0)}{\min(C_N/2, C_A )}
+ \frac{C(\Lve N\Lve,C_A,C_N,n, t)}{\min(C_N/2, C_A )} (1+\lve \bv\rve^2_{M^2(X_T)}).
\end{equation*}
With this last inequality  we easily conclude the proof of the claim.
\end{proof}
\begin{lem}\label{LEM-contraction}
Let $\Lambda_n$ be the mapping defined in Lemma \ref{Lambda-ITS} and  $$\Lve N\Lve:=\max\left(\noe, \nov\right).$$ Then, there exists a constant $\kappa>0$ depending only on $\Lve N\Lve$, $n$ and the constants in Assumptions \ref{lin-terms}-\ref{assum-G} such that
 \begin{equation*}
 \lve \Lambda_n(\bv_1)-\Lambda_n(\bv_2)\rve^2_{M^2(X_T)}\le \kappa \big[T^{\alpha-1}\vee T\big]\lve \bv_1-\v_2 \rve^2_{M^2(X_T)},
\end{equation*}
for any $\bv_1,\bv_2\in M^2(X_T)$.
\end{lem}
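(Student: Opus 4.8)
The plan is to exploit the linearity of \eqref{lin-eq-1}. Put $\bu_n^i:=\Lambda_n(\bv_i)$, $i=1,2$; by Lemma \ref{Lambda-ITS} both lie in $M^2(X_T)$ and, by Theorem \ref{Appendix-LIN}, have trajectories in $\mathbf{D}(0,T;\ve)\cap L^2(0,T;\be)$ a.s. Subtracting the two copies of \eqref{lin-eq-1}, and using that $A$ is linear and that $\bu_n^1,\bu_n^2$ share the initial datum $u_0$, the difference $\w:=\bu_n^1-\bu_n^2$ has vanishing initial value and solves
\begin{multline*}
 d\w(t)+A\w(t)\,dt=-\bigl[B_n^T(\bv_1(t))-B_n^T(\bv_2(t))\bigr]\,dt\\+\int_Z\bigl[G(z,\bv_1(t))-G(z,\bv_2(t))\bigr]\wie(dz,dt).
\end{multline*}
Since $\w$ inherits the regularity $\mathbf{D}(0,T;\ve)\cap L^2(0,T;\be)$, one may apply It\^o's formula to $\Psi(\w)=\langle\w,N\w\rangle$ exactly in the form \eqref{ITO-1}; the decisive simplification over Lemma \ref{Lambda-ITS} is that the boundary term, which was $\Psi(\bu_0)$ there, is here $\Psi(\w(0))=\Psi(0)=0$, so the estimates below will carry $\lve\bv_1-\bv_2\rve^2_{M^2(X_T)}$ rather than $1+\lve\bv\rve^2_{M^2(X_T)}$.

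From here I would transcribe the proof of Lemma \ref{Lambda-ITS}, replacing $\bu_n$ by $\w$, $B_n^T(\bv)$ by $B_n^T(\bv_1)-B_n^T(\bv_2)$ and $G(z,\bv)$ by $G(z,\bv_1)-G(z,\bv_2)$, and using the \emph{Lipschitz} bounds in place of the sublinear ones. For the drift term, Cauchy--Schwarz, Young's inequality with parameter $\eps$, and Proposition \ref{prop-global Lipschitz-F} applied on $[0,t]$ give
\begin{multline*}
 \me\Bigl|\int_0^t\langle B_n^T(\bv_1(s))-B_n^T(\bv_2(s)),N\w(s)\rangle\,ds\Bigr|\\\le\eps\,\me\int_0^t\lve\w(s)\rve_\ast^2\,ds+\frac{\Lve N\Lve^2\,\phi(n)}{4\eps}\,t^{1-\alpha}\,\lve\bv_1-\bv_2\rve^2_{M^2(X_T)},
\end{multline*}
with $\phi(n)$ as in \eqref{phin}. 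For the compensator term and for the cross term produced by the Burkholder--Davis--Gundy inequality I would invoke \eqref{eqn-local Lipschitz-G} with $p=1$, together with $\langle Nh,h\rangle\le\Lve N\Lve\,\lve h\rve^2$ and $\langle NG,\w\rangle\le\Lve N\Lve\,\lve G\rve\,\lve\w\rve$; after a further Young inequality with parameter $\delta$ this contributes at most $\delta\,\me\sup_{s\le t}\lve\w(s)\rve^2$ plus a term of the form $C\,\ell_1\,t\,\lve\bv_1-\bv_2\rve^2_{M^2(X_T)}$. For the remaining, purely quadratic-in-$G$, piece of the Burkholder--Davis--Gundy estimate I would use \eqref{eqn-local Lipschitz-G} with $p=2$, obtaining at most $C\,\ell_2\,t^{1/2}\,\lve\bv_1-\bv_2\rve^2_{M^2(X_T)}$; in both cases the constants $C$ depend only on $\Lve N\Lve$ and $\delta$.

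Inserting these bounds into the It\^o identity for $\Psi(\w)$ and using the coercivity relations $\langle Au,Nu\rangle\ge C_A\lve u\rve_\ast^2$ and $\langle u,Nu\rangle\ge C_N\lve u\rve^2$ of Assumption \ref{lin-terms}, I would choose $\eps=C_A$ and $\delta=C_N/2$ so as to absorb $\eps\,\me\int_0^t\lve\w\rve_\ast^2\,ds$ and $\delta\,\me\sup_{s\le t}\lve\w\rve^2$ into the left-hand side. Evaluating at $t=T$, dividing by $\min(C_N/2,C_A)$, and bounding the time factors $T^{1-\alpha}$, $T$, $T^{1/2}$ (each of which vanishes as the time tends to $0$) by a constant multiple of $T^{1-\alpha}\vee T$, one arrives at
\begin{equation*}
 \lve\Lambda_n(\bv_1)-\Lambda_n(\bv_2)\rve^2_{M^2(X_T)}\le\kappa\,\bigl(T^{1-\alpha}\vee T\bigr)\,\lve\bv_1-\bv_2\rve^2_{M^2(X_T)},
\end{equation*}
with $\kappa$ depending only on $\Lve N\Lve$, $n$ and the constants appearing in Assumptions \ref{lin-terms}--\ref{assum-G}.

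The only step that is not a mechanical repetition of Lemma \ref{Lambda-ITS} is the treatment of the second-order term of the It\^o formula for the Poisson integral and of its compensated martingale counterpart: after Burkholder--Davis--Gundy the latter involves $\lVert G(\bv_1)-G(\bv_2)\rVert^4_{L^4(Z,\nu,\ve)}$, which is exactly why part~\ref{part-i} of Assumption \ref{assum-G} is imposed for $p=2$ and not only for $p=1$. I expect this to be the main (and only mildly delicate) point; everything else goes through precisely because the homogeneous initial condition $\w(0)=0$ turns the affine estimates of Lemma \ref{Lambda-ITS} into the genuinely Lipschitz bound asserted here.
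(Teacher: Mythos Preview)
Your approach is correct and coincides with the paper's: subtract the two copies of \eqref{lin-eq-1}, apply It\^o's formula to $\langle N\w,\w\rangle$, bound the drift via Proposition~\ref{prop-global Lipschitz-F}, the compensator and cross term via BDG with \eqref{eqn-local Lipschitz-G} for $p=1$, the quadratic jump term via \eqref{eqn-local Lipschitz-G} for $p=2$, and absorb with the same choices $\eps=C_A$, $\delta=C_N/2$. One minor slip: for $\alpha<\tfrac12$ and small $T$ the factor $T^{1/2}$ is \emph{not} dominated by $T^{1-\alpha}\vee T$, so your final majorization of the three time factors is not quite right; this is harmless for the subsequent contraction argument (all factors vanish as $T\to0$), and the paper is equally loose at the corresponding step.
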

\begin{proof}
 Let $\bv_i$, $i=1,2$, be two elements of $M^2(X_T)$. To each $\bv_i$ one can associate a unique element $\bu_i\in M^2(X_T)$  which is a solution to Eq.
 \eqref{lin-eq-1} with the stochastic perturbation
  $B^t_n(\bv_i(t)) dt+ \int_Z G(z,\bv_i(t)) \wie(dz,dt)$ and initial condition $\bu_0$. In this proof we suppress the dependence on $n$ of the solution to
  \eqref{lin-eq-1}. The difference $\bu=\bu_1-\bu_2$ solves the linear equation
  \begin{equation}\label{difference}
  \begin{cases}
   d\bu(t) +A\bu(t) dt =[B_n^t(\bv_2(t))-B_n^t(\bv_1(t))]dt+\int_Z [G (z,z,\bv_1(t))-G(z,\bv_2(t))]\wie(dz,dt),\\
   \bu(0)=0.
   \end{cases}
  \end{equation}
  To simplify our notation we also set $\bv=\bv_1-\bv_2$.

As before we apply It\^o's formula (see, for instance, \cite[Appendix D]{Pes+Zab}) to $\Psi(u)=\langle Nu, u\rangle$ with \eqref{difference}. We then obtain
\begin{equation}\label{ITO-2}
\begin{split}
 \Psi(\bu(t))+2\int_0^t \langle A\bu(s), N\bu(s)\rangle ds\le 2 \int_0^t \lvert B_n^T(\bv_1(s))-B_n^T(\bv_2(s))\rvert \lvert N\bu(s)\rvert ds \\
 +\int_0^t \int_Z f(z,s,\bv_1,\bv_2) \nu(dz) ds\\+\int_0^t \int_Z g(z,s,\bv_1,\bv_2) \wie(dz,ds),
\end{split}
\end{equation}
with
\begin{equation*}
\begin{split}
 g(z,s,\bv_1,\bv_2):=\langle N[G (z,\bv_1(s))-G (z,\bv_2(s))], G (z,\bv_1(s))-G (z,\bv_2(s))\rangle\\
 +2 \langle [G (z,\bv_1(s))-G (z,\bv_2(s))], N\bu(s-)\rangle,
\end{split}
\end{equation*}
and
\begin{equation*}
 f(z,s,\bv_1,\bv_2):= \langle N[G (z,\bv_1(s))-G (z,\bv_2(s))], G (z,\bv_1(s))-G (z,\bv_2(s))\rangle.
\end{equation*}
Arguing as in the proofs of Eq. \eqref{ineq-NL-te}, \eqref{ineq-ST-co}, \eqref{ineq-ST-te} and \eqref{ineq-ST-te-2}, respectively, we obtain the following inequalities
\begin{equation*}
\begin{split}
\me  \int_0^t\lvert B_n^T(\bv_1(s))-B_n^T(\bv_2(s))\rvert \lvert N\bu(s)\rvert ds \le
\frac{\Lve N\Lve^2\phi(n)}{4\eps} t^{\alpha-1} \lve \bv\rve^2_{M^2(X_T)}\\
+\eps \me \int_0^t \lVert \bu(s)\rVert_\ast^2 ds,
\end{split}
\end{equation*}
\begin{equation*}
 \begin{split}
  \me \sup_{s\in [0,t]}\biggl\lvert \int_0^t \int_Z g(z,s,\bv_1,\bv_2) \wie(dz,ds)\biggr\rvert\le
   \Big[\frac{C^2 \Lve N \Lve^2 {\ell}_1}{4 \delta}+ \Lve N\Lve \ell_2 \Big] t \lve \bv\rve^2_{M^2(X_T)}\\
  +\delta \me\biggl[\sup_{s\in [0,t]}\lve \bu(s)\rve^2\biggr],
 \end{split}
\end{equation*}
\begin{equation*}
 \me \int_0^t \int_Z f(z,s,\bv_1,\bv_2) \nu(dz) ds \le \Lve N\Lve \ell_1 t \lve \bv\rve^2_{M^2(X_T)},
\end{equation*}
where $\eps, \delta$ are arbitrary positive numbers. By setting $T^\ast=
T \vee T^{\alpha-1} $ and
$$ \tilde{\kappa}:= \biggl(\Lve N\Lve \big[\frac{\phi(n)}{4\eps} + \frac{C^2 \ell_1}{4\delta}\big] +\ell_1
 +C \ell_2\biggr)\Lve N\Lve,$$  it follows from these inequalities and Eq. \eqref{ITO-2} that
\begin{equation*}
\begin{split}
 (C_N-\delta) \me\biggl[\sup_{s\in [0,t]}\lve \bu(s)\rve^2\biggr]+(2C_A-\eps) \me \int_0^t \lve \bu(s)\rve_\ast^2 ds\le
 \tilde{\kappa} T^\ast \lve \bv\rve^2_{M^2(X_T)},
 \end{split}
\end{equation*}
where we have used the fact that $\langle u, N u\rangle \ge C_N \lve u \rve^2$ and $\langle A u, Nu \rangle \ge C_A\lve u \rve_\ast^2$.
By choosing $\delta= C_N/2$ and $\eps=C_A$ we get from the last estimate that
\begin{equation*}
 \me\biggl[\sup_{s\in [0,t]}\lve\bu(s)\rve^2\biggr]+\me \int_0^t \lve \bu(s)\rve_\ast^2 ds \le \kappa T^\ast \lve v\rve^2_{M^2(X_T)},
\end{equation*}
where $\kappa:=\tilde{\kappa}/\min(C_N/2, C_A )$.
The last estimate means that
\begin{equation*}
 \lve \Lambda_n(\bv_1)-\Lambda_n(\bv_2)\rve^2_{M^2(X_T)}\le \kappa T^\ast \lve \bv_1-\bv_2 \rve^2_{M^2(X_T)}.
\end{equation*}
This completes the proof of our lemma.
\end{proof}
Let $n$ be a fixed positive integer. It follows from Lemma \ref{Lambda-ITS} that $\Lambda^n_{T,\bu_0}:=\Lambda_n$ maps
$M^2(X_T)$ into itself. From the proof of Lemma  \ref{LEM-contraction} we deduce that $\Lambda^n_{T,\bu_0}$ is globally Lipschitz. Moreover it is a strict contraction for small $T$.
 Therefore we can find a time $\delta_n>0$ that is independent of the initial condition $\bu_0$ such that $\Lambda^n_{\delta,\bu_0}$ is $\frac 12 $-contraction.
 Hence it admits a unique fixed point $\bu_{n,\delta_n}\in M^2(X_{\delta_n})$
 which solves on the small interval $[0,\delta_n]$  the nonlinear stochastic evolution equation
 \begin{equation}\label{Truncated-Eq-1}
  \bu(t)+\int_0^t [A\bu(s)+B_n^T(\bu(s))]ds=\bu_0 +\int_0^t \int_Z G(z,\bu(s))\wie(dz,s), \quad t\in [0,\delta_n).
 \end{equation}
\del{which is understood as an equality of random variables with values in $\h$.}
\begin{lem}\label{Weak-continuity}
Let $\bu_{n,\delta_n}$ be a solution of \eqref{Truncated-Eq-1}. Then $\mathbb{P}$-almost surely $\bu_{n,\delta_n}: [0,\delta_n) \to \ve$ is \cadlag.
\end{lem}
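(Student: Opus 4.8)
The plan is not to argue directly but to identify $\bu:=\bu_{n,\delta_n}$ as the solution of a \emph{linear} stochastic evolution equation whose pathwise regularity is already available from the appendix. Since $\bu$ solves \eqref{Truncated-Eq-1}, it in particular solves the linear problem \eqref{lin-eq-1} with the datum $\bv$ taken to be $\bu$ itself, and by the uniqueness part of Theorem~\ref{Appendix-LIN} this means $\bu=\Lambda_n(\bu)$; written out,
\begin{equation*}
\begin{cases}
 d\bu(t)+A\bu(t)\,dt=-B_n^{\delta_n}(\bu(t))\,dt+\int_Z G(z,\bu(t))\,\wie(dz,dt),\quad t\in[0,\delta_n],\\
 \bu(0)=\bu_0,
\end{cases}
\end{equation*}
the right-hand side being now regarded as a prescribed pair of processes (depending, of course, on the already-constructed path of $\bu$). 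Everything therefore reduces to verifying that this pair is an admissible input for Theorem~\ref{Appendix-LIN} on $[0,\delta_n]$, exactly as in the discussion preceding Lemma~\ref{Lambda-ITS}, and then reading off the conclusion.

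For the admissibility check I would note that, because $\bu\in M^2(X_{\delta_n})$, almost surely $\bu\in X_{\delta_n}$, so that $\sup_{s\le\delta_n}\lve\bu(s)\rve<\infty$ and $\int_0^{\delta_n}\lve\bu(s)\rve_\ast^2\,ds<\infty$. For the drift, $\lvert B_n^{\delta_n}(\bu(s))\rvert=\theta_n(\Vert\bu\Vert_{X_s})\,\lvert F(\bu(s))\rvert\le\lvert F(\bu(s))\rvert$, and the interpolation bound already carried out in the proof of Proposition~\ref{prop-global Lipschitz-F} gives $\int_0^{\delta_n}\lvert F(\bu(s))\rvert^2\,ds\le C^2\,\delta_n^{1-\alpha}\,\Vert\bu\Vert_{X_{\delta_n}}^{2p+2}<\infty$ a.s., whence $s\mapsto B_n^{\delta_n}(\bu(s))\in L^2(0,\delta_n;\h)$ a.s. For the jump coefficient, $(s,z)\mapsto G(z,\bu(s))$ is progressively measurable and, by \eqref{Lipschitz-G-2},
\begin{equation*}
 \me\int_0^{\delta_n}\int_Z\lVert G(z,\bu(s))\rVert^2\,\nu(dz)\,ds\;\le\;\tilde{\ell}_1\,\me\int_0^{\delta_n}\bigl(1+\lve\bu(s)\rve^2\bigr)\,ds\;<\;\infty,
\end{equation*}
so it lies in $\mathcal{M}^2(0,\delta_n,L^2(Z,\nu,\ve))$. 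Hence the hypotheses of Theorem~\ref{Appendix-LIN} are met.

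Finally I would invoke Theorem~\ref{Appendix-LIN}: it gives that the solution of the linear equation above has trajectories in $\mathbf{D}(0,\delta_n;\ve)\cap L^2(0,\delta_n;\be)$ with probability one, and by uniqueness, together with the fact that two elements of $M^2(X_{\delta_n})$ at distance zero are indistinguishable, this solution coincides with $\bu_{n,\delta_n}$; restricting to $[0,\delta_n)$ yields the assertion. I do not anticipate a genuine obstacle here, since the substantive work is hidden inside Theorem~\ref{Appendix-LIN}: the $\ve$-valued \cadlag regularity is \emph{not} delivered by the fixed-point scheme in $M^2(X_T)$ (which only controls the $L^\infty(0,T;\ve)$-norm of the iterates), but rather by the parabolic smoothing encoded in the coercivity $\langle Au,Nu\rangle\ge C_A\lve u\rve_\ast^2$, combined with the fact that $G$ takes values in $L^2(Z,\nu,\ve)$ so that the stochastic integral term is an $\ve$-valued \cadlag martingale by Theorem~\ref{THM-ZB+EH}. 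The only thing requiring mild care is the admissibility check of the second paragraph, i.e.\ confirming that the path-dependent data are legitimate inputs for the linear theory.
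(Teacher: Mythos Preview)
Your approach is essentially the same as the paper's: both verify that the drift $B_n^{\delta_n}(\bu(\cdot))$ lies in the right $\h$-valued space and that the jump coefficient $G(\cdot,\bu(\cdot))$ is admissible, and then appeal to a regularity result that ultimately rests on \cite[Theorem 2]{Gyongy+Krylov}. The only cosmetic difference is that the paper cites Gy\"ongy--Krylov directly (after invoking Theorem~\ref{THM-ZB+EH} to see the stochastic integral is $\ve$-valued \cadlag), whereas you route through Theorem~\ref{Appendix-LIN}, which packages the same ingredients. One small point: Theorem~\ref{Appendix-LIN} asks for $\phi\in M^2(0,\delta_n;\h)$, not merely $\phi\in L^2(0,\delta_n;\h)$ a.s.; your bound $\int_0^{\delta_n}\lvert F(\bu(s))\rvert^2\,ds\le C\delta_n^{1-\alpha}\Vert\bu\Vert_{X_{\delta_n}}^{2p+2}$ only gives the latter, so you should instead invoke Proposition~\ref{prop-global Lipschitz-F} directly (with $u_2=0$), which exploits the truncation $\theta_n$ to get $\Vert B_n^{\delta_n}(\bu)\Vert_{L^2(0,\delta_n;\h)}\le C(n)\,\delta_n^{(1-\alpha)/2}\Vert\bu\Vert_{X_{\delta_n}}$ and hence $M^2$-membership after taking expectations.
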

\begin{proof}
For sake of simplicity we just write $\delta:=\delta_n$. Since the solution $\bu_{n,\delta}$ to the truncated equation \eqref{Truncated-Eq} belongs to $M^2(X_\delta)$, from Proposition
\ref{prop-global Lipschitz-F} and the fact that $A\in \mathcal{L}(\be, \h)$ we infer that $ A\bu_{n,\delta}(\cdot) + B_n^T(\bu_{n,\delta}(\cdot))$
is an element of $M^2(0,\delta; \h)$. From Theorem \ref{THM-ZB+EH} we derive that the process
 $\int_0^{\cdot} \int_Z G(\bu_{n,\delta}(s)) \wie(dz,s)$ belongs to $L^2(\Omega, \mathbf{D}(0,\delta;\ve))$ and define an $\mathbb{F}$-martingale.
Since $\mathbb{P}$-a.s
 \begin{equation*}
  \bu_{n,\delta}(t)+\int_0^t [A\bu_{n,\delta}(s)+B_n^T(\bu_{n,\delta}(s))]ds=u_0 +\int_0^t \int_Z G(z,\bu_{n,\delta}(s))\wie(dz,ds),
 \end{equation*}
 $t\in (0,\delta]$, it follows from the above remarks and \cite[Theorem 2]{Gyongy+Krylov} that $\mathbb{P}$-a.s. $\bu_{n,\delta}\in \mathbf{D}(0,\delta;
 \ve)$.
\end{proof}
Now, we are able to formulate the result about the global existence of solution to the truncated equation \eqref{Truncated-Eq}.
\begin{thm}\label{Global-Trunc}
Let Assumption \ref{lin-terms}, Assumption
 \ref{assum-F} and Assumption \ref{assum-G} hold. Then, for each $n\ge 1$ the truncated equation \eqref{Truncated-Eq} admits a unique global solution $\bu^n\in M^2(X_T)$ for any $T\in (0,\infty)$.
\end{thm}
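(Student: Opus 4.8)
The plan is to patch together local solutions. Local existence on a time interval of length $\delta_n$ depending only on $n$ and on the constants of Assumptions \ref{lin-terms}--\ref{assum-G} — and, crucially, neither on the initial datum nor on the starting time — has already been obtained above, by applying the Banach fixed point theorem to $\Lambda^n_{\delta_n,\bu_0}$ on the Banach space $M^2(X_{\delta_n})$, together with the \cadlag regularity supplied by Lemma \ref{Weak-continuity}. So it suffices to iterate this construction finitely many times.

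First I would fix $T\in(0,\infty)$, set $N_T:=\lceil T/\delta_n\rceil$ and $t_k:=(k\delta_n)\wedge T$ for $k=0,\dots,N_T$, and construct by induction on $k$ a progressively measurable process $\bu^n$ on $[0,t_k]$ that lies in $M^2(X_{t_k})$, is \cadlag with values in $\ve$, and solves \eqref{Truncated-Eq} on $[0,t_k]$. The case $k=1$ is the fixed point $\bu_{n,\delta_n}$ restricted to $[0,t_1]$. For the step from $k$ to $k+1$, on the interval $[t_k,t_{k+1}]$ I would run the analogue of \eqref{lin-eq-1}: to a process $\bv$ defined on $[t_k,t_{k+1}]$ associate the concatenation $\bu^n\diamond\bv$ (equal to $\bu^n$ on $[0,t_k]$ and to $\bv$ on $[t_k,t_{k+1}]$), and let $\Lambda(\bv)$ be the solution on $[t_k,t_{k+1}]$ of the linear equation analogous to \eqref{lin-eq-1} with cut-off drift $\theta_n(\Vert\bu^n\diamond\bv\Vert_{X_s})F((\bu^n\diamond\bv)(s))$, noise term $\int_Z G(z,(\bu^n\diamond\bv)(s))\wie(dz,ds)$, and value $\bu^n(t_k)$ at time $t_k$; existence and uniqueness of $\Lambda(\bv)$ come from Theorem \ref{Appendix-LIN} and the time-homogeneity of $\eta$. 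Here $M^2(X_{t_k,t_{k+1}})$ denotes the obvious analogue of $M^2(X_T)$ with $[0,T]$ replaced by $[t_k,t_{k+1}]$.

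The hard part — and the only genuinely new point beyond reassembling earlier lemmas — is checking that the memory built into the cut-off $\theta_n(\Vert\cdot\Vert_{X_s})$ does not destroy the contraction when we restart at $t_k$. For this I would use that $\bu^n\diamond\bv_1$ and $\bu^n\diamond\bv_2$ coincide on $[0,t_k]$ and that $\Vert\cdot\Vert_{X_s}$ is a genuine norm, so by the triangle inequality $\bigl|\Vert\bu^n\diamond\bv_1\Vert_{X_s}-\Vert\bu^n\diamond\bv_2\Vert_{X_s}\bigr|\le\Vert\bv_1-\bv_2\Vert_{X_{t_k,s}}$ for $s\in[t_k,t_{k+1}]$. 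Consequently the Lipschitz bound of Proposition \ref{prop-global Lipschitz-F} and the It\^o/BDG estimates behind Lemmas \ref{Lambda-ITS} and \ref{LEM-contraction} go through verbatim on $[t_k,t_{k+1}]$, with interval length $t_{k+1}-t_k\le\delta_n$, so $\Lambda$ is a $\tfrac12$-contraction on $M^2(X_{t_k,t_{k+1}})$ and has a unique fixed point $\bv^\star$, which is square-integrable and \cadlag on $[t_k,t_{k+1}]$ by Lemmas \ref{Lambda-ITS} and \ref{Weak-continuity}. Splitting the stochastic integral over $[0,t_k]\cup[t_k,t_{k+1}]$ then shows that $\bu^n:=\bu^n\diamond\bv^\star$ solves \eqref{Truncated-Eq} on $[0,t_{k+1}]$ and belongs to $M^2(X_{t_{k+1}})$, since its norm there is bounded by the finite norms of the two pieces. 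Taking $k=N_T$ produces the desired global solution on $[0,T]$.

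Finally, for uniqueness I would argue the same way: two solutions in $M^2(X_T)$ are, on $[0,\delta_n]$, both fixed points of the $\tfrac12$-contraction $\Lambda^n_{\delta_n,\bu_0}$, hence equal there; the same concatenation/triangle-inequality argument (now comparing two solutions that share the same history on $[0,t_k]$) propagates the equality to $[\delta_n,2\delta_n]$ and so on up to $T$. I expect no obstacle here beyond the cut-off bookkeeping already isolated above.
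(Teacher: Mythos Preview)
Your proposal is correct and follows essentially the same route as the paper: obtain a $\tfrac12$-contraction on intervals of fixed length $\delta_n$ independent of the initial data, then concatenate finitely many local solutions and argue uniqueness interval by interval. In fact you handle one point more carefully than the paper does---the paper simply restarts the map $\Lambda^n_{\delta,\cdot}$ at time $t_k$ with new initial value $\bu^{[n,k]}(t_k)$ without addressing the history built into the cut-off $\theta_n(\Vert\cdot\Vert_{X_s})$, whereas your concatenation-plus-triangle-inequality argument makes explicit why the Lipschitz estimate of Proposition~\ref{prop-global Lipschitz-F}, and hence the contraction, survives the restart.
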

\begin{proof}
Let $n$ be a positive integer and  $\delta_n>0$ such that $\Lambda^n_{\delta_n, \bu_0}$ is a $\frac12$-contraction. To keep the notation simple we just write
$\delta:=\delta_n.$ For $k\in \mathbb{N}$ let $(t_k)_{k\in \mathbb{N}}$
  be a sequence of times defined by $t_k=k \delta$. By the $\frac12$-contraction property of $\Lambda^n_{\delta,u_0}$ we can find $\bu^{[n,1]}\in M^2(X_\delta)$ such that
  $\bu^{[n,1]}=\Lambda^n_{\delta,u_0}(\bu^{[n,1]})$. Since $\bu^{[n,1]}\in M^2(X_\delta)$ it follows from Lemma \ref{Weak-continuity} that $\bu^{[n,1]}$ is $\mathcal{F}_t$-measurable  and
  $\bu^{[n,1]}(t)\in L^2(\Omega, \mathbb{P};\ve)$ for any $t\in [0,\delta]$. Thus replacing $\bu_0$ with $\bu^{[n,1]}(\delta)$ where
  $$\bu^{[n,1]}(\delta):=\bu^{[n,1]}(\delta-)+\int_Z G(z,\bu_{n,\delta}(\delta-))\wie(dz,\{\delta\}),$$ and using the same argument as
   above we can find
  $\bu^{[n,2]}\in M^2(X_{t_1, t_2})$ such that $\bu^{[n,2]}=\Lambda^n_{\delta, \bu^{[n,1]}(\delta)}(\bu^{[n,2]})$.
  By induction we can construct a sequence $\bu^{[n,k]}\subset M^2(X_{t_{k-1},t_{k}})$ such that $\bu^{[n,k]}=\Lambda_{\delta, \bu^{[n,k-1]}}(\bu^{[n,k]})$.
  Now let  $\bu^n$ be the process defined by $\bu^n(t)=\bu^{[n,1]}(t)$, $t \in [0,\delta)$, and for $k=[\frac T \delta]+1$ and $0\le t<\delta$,
  let $\bu^{n}(t+k\delta)=\bu^{[n,k]}(t)$. By construction $\bu^n\in M^2(X_T)$ and $\bu^n= \Lambda^n_{T,u_0}(\bu^n)$, consequently $\bu^n$ is a global solution to
  the truncated equation \eqref{Truncated-Eq}.

  Now let $(\bv,\tau)$ be a another local solution of Eq. \eqref{Truncated-Eq}, we shall show that $\bu^n(t)=\bv(t)$, for all $t\in [0,\tau)$ almost surely. For this purpose let
  $t_1=\tau\wedge \delta$ and $t_k=\tau\wedge (k\delta)$ where $k$ and $\delta$ are as above; note that as $k\rightarrow \lfloor \frac T \delta\rfloor$ we have $t_k\uparrow \tau$ almost surely. With the same contraction principle used above we infer that
  $1_{[0,\tau \wedge\delta)}\bu^n(.)=1_{[0,\tau\wedge  \delta)} \bv(.)$ and $1_{[0,t_k)}\bu^n(.)=1_{[0,t_k)} \bv(.)$ almost surely. By letting $k\rightarrow \infty$ we infer that
  $\bu^n(t)=\bv(t)$, for all $t\in [0,\tau)$ almost surely.
\end{proof}

 \subsection{Existence and uniqueness of maximal/global solution to Eq. \eqref{Full-EQ}}
 In this subsection we will use what we have learnt from the solvability of the truncated equation \eqref{Truncated-Eq} to construct a unique maximal local and global solution to
 the original problem \eqref{Full-EQ}.

 We start with the existence and uniqueness of maximal local solution.
\begin{thm}\label{MAX-FULL-EQ}
 Let Assumption \ref{lin-terms}-\ref{assum-G} be satisfied, then there exists a unique pair $(\bu,\tau_\infty)$ which is a maximal local solution to \eqref{Full-EQ}.
\end{thm}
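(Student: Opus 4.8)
The plan is to obtain the maximal solution by patching together the global solutions of the truncated equations supplied by Theorem~\ref{Global-Trunc}, in the spirit of \cite{Brz+Millet_2012, SLC-JAP}, the new difficulty being the interplay of this patching with the jumps. For each $n\ge 1$ let $\bu^n\in M^2(X_T)$ be the unique global solution of \eqref{Truncated-Eq}, and set
\[
\tau_n:=\inf\{t\in[0,T]:\ \lVert\bu^n\rVert_{X_t}\ge n\}\wedge T .
\]
A preliminary remark I would record first: for a \cadlag $\ve$-valued path $u$, the map $t\mapsto\lVert u\rVert_{X_t}$ is non-decreasing and right-continuous (the running-supremum part is right-continuous because $u(s)\to u(t)$ as $s\downarrow t$, and $\int_0^\cdot\lVert u(s)\rVert_\ast^2\,ds$ is continuous). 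Hence, using Lemma~\ref{Weak-continuity}, each $\tau_n$ is an $\FF$-stopping time, and on $\{\tau_n<T\}$ one has $\lVert\bu^n\rVert_{X_{\tau_n}}\ge n$. The hard part of the whole argument will be the bookkeeping at these random times $\tau_n$: identifying the jumps of $\bu^n$ and $\bu^{n+1}$ there (even though $\theta_n(\lVert\bu^n\rVert_{X_{\tau_n}})$ may be strictly less than $1$) and ruling out that the sequence $(\tau_n)$ stabilises strictly below $\tau_\infty$; both are automatic in the Wiener case but not here.

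\emph{Consistency and gluing.} Since $\theta_n(x)=\theta_{n+1}(x)=1$ for $x\le n$, on $[0,\sigma_n)$ with $\sigma_n:=\inf\{t:\lVert\bu^{n+1}\rVert_{X_t}\ge n\}$ the process $\bu^{n+1}$ solves the $n$-truncated equation, so the uniqueness statement proved at the end of Theorem~\ref{Global-Trunc} (applied to the local solution $(\bu^{n+1},\sigma_n)$) yields $\bu^{n+1}=\bu^n$ on $[0,\sigma_n)$. Comparing the two $X_t$-norms and matching the jumps at $\sigma_n$ — both equal $\int_Z G(z,\cdot(\sigma_n-))\wie(dz,\{\sigma_n\})$, as the truncation affects only the $F$-term — one gets $\sigma_n=\tau_n$, hence $\tau_n\le\tau_{n+1}$ and $\bu^{n+1}=\bu^n$ on $[0,\tau_n)$ (with the common value $\bu(\tau_n):=\bu^n(\tau_n)=\bu^{n+1}(\tau_n)$). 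Setting $\tau_\infty:=\lim_n\tau_n$ and $\bu(t):=\bu^n(t)$ for $t\in[0,\tau_n)$ gives a well-defined progressively measurable process with $\bu\in X_t$ a.s. for every $t<\tau_\infty$. To check \eqref{Full-EQ-2} on $[0,t\wedge\tau_n]$: on $[0,\tau_n)$ we have $\theta_n(\lVert\bu^n\rVert_{X_s})=1$, so $B_n^T(\bu^n(s))=F(\bu^n(s))$ for a.e. $s\le\tau_n$, which makes the drift integral coincide with that of \eqref{Full-EQ-2}; the stochastic integral is unchanged because the left limits of $\bu$ and $\bu^n$ agree up to $\tau_n$. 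Thus $(\bu,\tau_\infty)$ is a local solution in the sense of Definition~\ref{Def-Loc-SOL}.

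\emph{Maximality.} Since $t\mapsto\lVert\bu\rVert_{X_t}$ is non-decreasing, $\lim_{t\uparrow\tau_\infty}\lVert\bu\rVert_{X_t}=\sup_{t<\tau_\infty}\lVert\bu\rVert_{X_t}$. On $\{\tau_\infty<T\}$ every $\tau_n\le\tau_\infty<T$, so $\lVert\bu^n\rVert_{X_{\tau_n}}\ge n$ for all $n$. If $(\tau_n)$ were eventually constant, say $\tau_n=\tau_\infty$ for $n\ge n_0$, then $\lVert\bu^{n_0}\rVert_{X_{\tau_\infty}}\ge n$ for all $n\ge n_0$, contradicting $\bu^{n_0}\in M^2(X_T)$; hence a.s. on $\{\tau_\infty<T\}$ one has $\tau_n<\tau_\infty$ for every $n$, and therefore
\[
\sup_{t<\tau_\infty}\lVert\bu\rVert_{X_t}\ \ge\ \sup_n\lVert\bu\rVert_{X_{\tau_n}}\ =\ \sup_n\lVert\bu^n\rVert_{X_{\tau_n}}\ \ge\ \sup_n n\ =\ \infty .
\]
So $(\bu,\tau_\infty)$ is a maximal local solution in the sense of Definition~\ref{Def-Max-SOL}.

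\emph{Uniqueness.} Let $(\bv,\sigma_\infty)$ be another maximal local solution and put $\rho_n:=\inf\{t<\sigma_\infty:\lVert\bv\rVert_{X_t}\ge n\}$. On $[0,\rho_n)$ we have $\theta_n(\lVert\bv\rVert_{X_s})=1$, so $(\bv,\rho_n)$ is a local solution of the $n$-truncated equation, whence $\bv=\bu^n=\bu$ on $[0,\rho_n)$ by the uniqueness in Theorem~\ref{Global-Trunc}; matching jumps as above forces $\rho_n=\tau_n$. The blow-up property of $(\bv,\sigma_\infty)$ gives $\rho_n\uparrow\sigma_\infty$, so letting $n\to\infty$ we conclude $\sigma_\infty=\tau_\infty$ and $\bu(t)=\bv(t)$ for all $t<\tau_\infty$. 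This completes the proof.
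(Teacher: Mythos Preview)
Your proof is correct and follows essentially the same route as the paper's: construct the solution by gluing the global solutions $\bu^n$ of the truncated equations along the stopping times $\tau_n$, verify consistency via the uniqueness in Theorem~\ref{Global-Trunc}, deduce maximality from $\lVert\bu^n\rVert_{X_{\tau_n}}\ge n$, and prove uniqueness by comparing any other maximal solution with the $\bu^n$'s. Your treatment is in fact somewhat more careful than the paper's on two points---the matching of jumps at $\tau_n$ (so that $\bu^{n+1}(\tau_n)=\bu^n(\tau_n)$) and the exclusion of the possibility that $(\tau_n)$ stabilises strictly below $T$---both of which the paper handles rather loosely.
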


\begin{proof}
We have seen that for each $n\in \mathbb{N}$ Eq. \eqref{Truncated-Eq} has an unique global strong solution $\bu^n$.
Let us construct a sequence of stopping times
$\{\tau_n, n\in \mathbb{N}\}$
as follows
\begin{equation*}
 \tau_n=\inf\{t \ge 0, \lve \bu^n\rve_{X_t}\ge n\}\wedge T, n\in \mathbb{N}.
\end{equation*}
 Now let $k>n$ and $\tau_{n,k}=\inf\{t\ge 0, \lve \bu^k\rve_{X_t}\ge n\}\wedge T $. Since $\tau_{n.k}\le \tau_k$ a.s., $(\bu^k, \tau_{n,k})$ is a local solution to
Eq. \eqref{Truncated-Eq} and $(\bu^n, \tau_n)$ is also a local solution to Eq. \eqref{Truncated-Eq}. Hence by the uniqueness we proved in Theorem \ref{Global-Trunc} we infer that $\bu^n(t)=\bu^k(t)$
 a.s for all $t\in [0, \tau_n\wedge \tau_{n,k})$ which implies that
 \begin{equation}\label{Uniq-local}
\bu^n(t)=\bu^k(t) \text{ a.s. for }  t\in [0,\tau_n).
 \end{equation}
This also proves that $\tau_n<\tau_k$ a.s. for all $n<k$,
 and the sequence  $\{\tau_n, n\in \mathbb{N}\}$ has a limit
 $\tau_\infty:= \lim_{n\uparrow \infty}\tau_n$ a.s..

 Now let $\{\bu(t), 0\le t <\tau_\infty\}$ be the stochastic process defined by
 \begin{equation}
  \bu(t)=\bu^n(t), \,\, t\in [\tau_{n-1}, \tau_n), \,\, n\ge 1,
 \end{equation}
 where $\tau_0=0$. Since by definition $\theta_n(\lve \bu^n(s)\rve_{X_s})=1$ for any $s\in [0,t\wedge \tau_n)$,
 it follows that $B_n^T(\bu^n(s))=F(\bu^n(s))$ for any
$s\in [0,t\wedge \tau_n)$. By \eqref{Uniq-local} we have $\bu(t\wedge \tau_n)=\bu^n(t\wedge \tau_n)$, thus we can derive that $\mathbb{P}$-a.s.
\begin{align}
 \bu(t\wedge \tau_n)=& \bu_0-\int_0^{t\wedge \tau_n} \Big[A\bu^n(s)+B_n^T(\bu^n(s))\Big]ds+\int_0^{t\wedge \tau_n} \int_Z G(z,\bu^n(s))\wie(dz,s),\nonumber\\
 =& \bu_0-\int_0^{t\wedge \tau_n} \Big[A\bu(s)+F(\bu(s))\Big]ds+\int_0^{t\wedge \tau_n}\int_Z G(z,\bu(s))\wie(dz,s),\nonumber
\end{align}
for any $t\in [0,T]$. This proves that $(\bu, \tau_n)$ is a local solution to \eqref{Full-EQ}.
On $\{\tau_\infty(\omega) <T\}$ we have
\begin{align*}
 \lim_{t\uparrow \tau_\infty} \lve \bu \rve_{X_t}\ge& \lim_{n\uparrow \infty}\lve \bu \rve_{X_{\tau_n}},\\
 \ge& \lim_{n\uparrow \infty} \lve \bu^n \rve_{X_{\tau_n}}=\infty.
\end{align*}
Therefore $(\bu,\tau_\infty)$ is a maximal local solution to Eq. \eqref{Full-EQ}.

We will prove that this maximal solution is unique. For this let $(\bv, \sigma_\infty)$ be another maximal local solution and $\{\sigma_n, n\ge 0\}$
a sequence of stopping times converging to
$\sigma_\infty$ defined by
\begin{equation*}
 \sigma_n=\inf\{t\ge 0, \lve \bv\rve_{X_t}\ge n\}\wedge \sigma_\infty  \wedge T.
\end{equation*}
Arguing as above we can prove that
$\bu(t)=\bv(t)$ for all $t\in [0,\tau_n \wedge \sigma_n]$ a.s. which, upon letting $n\uparrow \infty$, implies that
\begin{equation*}
 \bu(t)=\bv(t) \text{ for all } t\in [0,\tau_\infty \wedge \sigma_\infty] \text{ a.s.}.
\end{equation*}
From this last identity we can conclude that $\tau_\infty=\sigma_\infty$ almost surely. Indeed if the last conclusion were not true then we either have
\begin{align}
 \lim_{t\uparrow \sigma_\infty}\lve 1_{\{\sigma_\infty>\tau_\infty \}} \bv \rve_{X_{\sigma_t}}
 =& \lim_{n\uparrow }\lve 1_{\{\sigma_\infty>\tau_\infty \}} \bv \rve_{X_{\sigma_n}}\nonumber\\
 =& \lim_{n\uparrow }\lve 1_{\{\sigma_\infty>\tau_\infty \}} \bu \rve_{X_{\tau_n}}=\infty,\label{Uniq-MAX-1}
\end{align}
or
\begin{align}
 \lim_{t\uparrow \tau_\infty}\lve 1_{\{\sigma_\infty<\tau_\infty \}} \bu \rve_{X_t}
 =& \lim_{n\uparrow }\lve 1_{\{\sigma_\infty<\tau_\infty \}} \bu \rve_{X_{\tau_n}}\nonumber\\
 =& \lim_{n\uparrow }\lve 1_{\{\sigma_\infty<\tau_\infty \}} \bv \rve_{X_{\sigma_n}}=\infty\label{Uniq-MAX-2}.
\end{align}
The identity \eqref{Uniq-MAX-1} (resp. Eq. \eqref{Uniq-MAX-2}) contradicts the fact that $\v$ (resp. $\bu$) does not explode before time $\sigma_\infty$
(resp. $\tau_\infty$).
Therefore one must have $\tau_\infty=\sigma_\infty$ almost surely, which yields the uniqueness of the maximal local solution $(\bu, \tau_\infty)$.
\end{proof}

\begin{prop}\label{THM-EXIST}
 In addition to the assumptions of Theorem \ref{GLO-FULL-EQ} we assume that $\me \lvert \bu_0\rvert^4< \infty$ and there exists $\tilde{C}_A>0$ such that
 $\langle A u, u\rangle \ge \tilde{C}_A \lVert u\rVert^2$ for any $u\in \ve$. We also suppose that $F$
 \begin{equation}\label{FNU0}
  \langle F(u), u\rangle=0,
 \end{equation}
 for all $u\in \ve$.
 Let $\bu$ be the stochastic process we constructed in Theorem \ref{MAX-FULL-EQ}.
 Let $(\tau_n)_{n\ge 1}$ be a sequence of stopping times defined by
 \begin{equation*}
  \tau_n=\inf\{t\ge 0: \lve \bu \rve^2_{X_t} \ge n^2\}\wedge T.
 \end{equation*}
Then for  $r=1,2$, for any $t\ge 0$ there exists constant $\tilde{C}>0$ such that the local solution $ (\bu, \tau_n)$ to \eqref{Full-EQ} satisfies
 \begin{equation}\label{WEAK-EST-1}
  \me \sup_{s\in [0,t\wedge \tau_n)}\lvert \bu(s)\lvert^{2r} +\me \int_0^{t\wedge \tau_n} \lvert \bu(s) \rvert^{2r-2} \lve \bu(s)\rve^2 ds\le \tilde{C},
 \end{equation}
 and
 \begin{equation}\label{WEAK-EST-2}
  \me \biggl[\int_0^{t\wedge \tau_n} \lve \bu(s)\rve^2 ds \biggr]^2 \le \tilde{C},
 \end{equation}
 for any $n\ge 1$.
\end{prop}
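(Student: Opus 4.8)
The plan is to run It\^o's formula for the maps $u\mapsto\lvert u\rvert^{2}$ and $u\mapsto\lvert u\rvert^{4}$ on the local solution $(\bu,\tau_n)$ of \eqref{Full-EQ}, exploiting the two additional structural hypotheses: the orthogonality \eqref{FNU0} makes the nonlinearity $F$ disappear from the drift, while the coercivity $\langle Au,u\rangle\ge\tilde C_A\lve u\rve^{2}$ supplies the dissipative terms on the left-hand side of \eqref{WEAK-EST-1}. The first observation is that on $[0,\tau_n)$ one has $\theta_n(\lve\bu\rve_{X_s})=1$, so there $\bu=\bu^{n}$ coincides with the truncated global solution of \eqref{Truncated-Eq} and $B_n^{T}(\bu^{n}(s))=F(\bu^{n}(s))$; moreover $\sup_{s<\tau_n}\lvert\bu(s)\rvert\le c\,\lve\bu\rve_{X_{\tau_n}}\le cn$, so by Assumption \ref{assum-G} every stopped compensated stochastic integral occurring below is a genuine square-integrable \cadlag martingale, in particular of zero expectation.

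\emph{Second and fourth moments without supremum.} Applying It\^o's formula to $\lvert\bu(t\wedge\tau_n)\rvert^{2}$, using $\langle F(\bu),\bu\rangle=0$, $\langle A\bu,\bu\rangle\ge\tilde C_A\lve\bu\rve^{2}$ and the identity $\lvert u+g\rvert^{2}-\lvert u\rvert^{2}-2\langle g,u\rangle=\lvert g\rvert^{2}$ for the compensator term, gives
\[
\lvert\bu(t\wedge\tau_n)\rvert^{2}+2\tilde C_A\int_0^{t\wedge\tau_n}\lve\bu(s)\rve^{2}\,ds\le\lvert\bu_0\rvert^{2}+\int_0^{t\wedge\tau_n}\!\!\int_Z\lvert G(z,\bu(s))\rvert^{2}\,\nu(dz)\,ds+M_1(t\wedge\tau_n),
\]
where $M_1$ is the $\wie$-integral of $\lvert\bu(s-)+G(z,\bu(s))\rvert^{2}-\lvert\bu(s-)\rvert^{2}$. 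Arguing as for \eqref{Lipschitz-G-2} but from \eqref{eqn-local Lipschitz-G-H} with $p=1$ gives $\int_Z\lvert G(z,\bu(s))\rvert^{2}\nu(dz)\le C(1+\lvert\bu(s)\rvert^{2})$; taking expectations (so $M_1$ drops out) and Gronwall's lemma then yield $\sup_{t\le T}\me\lvert\bu(t\wedge\tau_n)\rvert^{2}\le C_1$ and $\me\int_0^{t\wedge\tau_n}\lve\bu\rve^{2}\,ds\le C_1'$, both independent of $n$. The same computation with $\lvert\bu(t\wedge\tau_n)\rvert^{4}$ produces $4\tilde C_A\int_0^{t\wedge\tau_n}\lvert\bu\rvert^{2}\lve\bu\rve^{2}\,ds$ on the good side and a compensator contribution bounded by $C\int_0^{t\wedge\tau_n}\big(\lvert\bu\rvert^{2}\!\int_Z\lvert G\rvert^{2}\nu(dz)+\int_Z\lvert G\rvert^{4}\nu(dz)\big)\,ds\le C\int_0^{t\wedge\tau_n}(1+\lvert\bu\rvert^{4})\,ds$, using \eqref{eqn-local Lipschitz-G-H} for $p=1,2$; taking expectations kills the new martingale and Gronwall then gives $\sup_{t\le T}\me\lvert\bu(t\wedge\tau_n)\rvert^{4}\le C_2$ and $\me\int_0^{t\wedge\tau_n}\lvert\bu\rvert^{2}\lve\bu\rve^{2}\,ds\le C_2'$, uniformly in $n$. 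These two integral bounds are exactly the $r=1$ and $r=2$ integral terms in \eqref{WEAK-EST-1}.

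\emph{Supremum bounds and \eqref{WEAK-EST-2}.} For $r=1$ I take $\sup_{s<t\wedge\tau_n}$ in the displayed inequality above and then expectations; the only delicate term is $\me\sup_s\lvert M_1(s)\rvert$, estimated by Doob's $L^{2}$-inequality and then \eqref{ineq-2.03}:
\[
\me\sup_s\lvert M_1(s)\rvert\le\big(\me\sup_s\lvert M_1(s)\rvert^{2}\big)^{1/2}\le 2\big(\me\lvert M_1(t\wedge\tau_n)\rvert^{2}\big)^{1/2}\le C\Big(\me\int_0^{t\wedge\tau_n}\!\!\int_Z\big(\lvert G\rvert^{2}\lvert\bu\rvert^{2}+\lvert G\rvert^{4}\big)\,\nu(dz)\,ds\Big)^{1/2},
\]
which is finite and $n$-independent by the moment bounds above and \eqref{eqn-local Lipschitz-G-H}. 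For $r=2$, since $\sup_s\lvert\bu(s)\rvert^{4}=\big(\sup_s\lvert\bu(s)\rvert^{2}\big)^{2}$, I bound $\sup_s\lvert\bu(s)\rvert^{2}\le\lvert\bu_0\rvert^{2}+C\int_0^{t\wedge\tau_n}(1+\lvert\bu\rvert^{2})\,ds+\sup_s\lvert M_1(s)\rvert$, square, and take expectations; here $\me\big(\int_0^{t\wedge\tau_n}(1+\lvert\bu\rvert^{2})\,ds\big)^{2}\le 2T^{2}+2T\,\me\int_0^{t\wedge\tau_n}\lvert\bu\rvert^{4}\,ds$ is controlled by the fourth-moment bound, and $\me\sup_s\lvert M_1(s)\rvert^{2}$ by the Doob/\eqref{ineq-2.03} estimate. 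Finally \eqref{WEAK-EST-2} follows by squaring $2\tilde C_A\int_0^{t\wedge\tau_n}\lve\bu\rve^{2}\,ds\le\lvert\bu_0\rvert^{2}+C\int_0^{t\wedge\tau_n}(1+\lvert\bu\rvert^{2})\,ds+\sup_s\lvert M_1(s)\rvert$ and taking expectations, using the same ingredients.

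The step I expect to be the main obstacle is the martingale term in the $r=2$ supremum estimate: a direct Burkholder--Davis--Gundy bound for the It\^o integral built from $\lvert\bu+G\rvert^{4}-\lvert\bu\rvert^{4}$ would require control of $\int_Z\lvert G\rvert^{6}\,\nu(dz)$ and $\int_Z\lvert G\rvert^{8}\,\nu(dz)$, which Assumption \ref{assum-G} does not provide. Reducing everything, as above, to Doob's $L^{2}$-inequality for the single martingale $M_1$ coming from the $\lvert\cdot\rvert^{2}$-It\^o formula circumvents this, at the price of first establishing the non-supremum fourth-moment bound. It remains only to check, routinely (e.g.\ by exhausting $\tau_n$ from below by stopping times), that replacing $[0,t\wedge\tau_n)$ by the closed interval causes no harm.
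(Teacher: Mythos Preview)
Your argument is correct; it takes a somewhat different tactical route from the paper's.  The paper proceeds entirely via BDG: for $r=1$ it applies It\^o to $\lvert\bu\rvert^{2}$, takes the supremum, and closes directly with BDG (splitting the integrand $f=\lvert G\rvert^{2}+2\langle G,\bu\rangle$ and using Young so that only $\int_Z\lvert G\rvert^{2}\nu(dz)$ and $\big(\int_Z\lvert G\rvert^{4}\nu(dz)\big)^{1/2}\le C(1+\lvert\bu\rvert^{2})$ appear) and Gronwall; for $r=2$ it applies It\^o again to $y^{2}$ with $y=\lvert\bu\rvert^{2}$, so the martingale part is $\int\!\!\int(2y_-f+f^{2})\,\wie$, and handles $\int\!\!\int f^{2}\,\wie$ by the $L^{1}$ bound of \cite{Rud+Zig} and $\int\!\!\int 2y_-f\,\wie$ by BDG and Young.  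Your route instead first gets the \emph{non-supremum} bounds for $r=1,2$ by taking plain expectations (no martingale estimate needed), and then obtains \emph{all} the supremum bounds and \eqref{WEAK-EST-2} from Doob's $L^{2}$-inequality applied to the single martingale $M_{1}$ of the $\lvert\cdot\rvert^{2}$-formula, squaring the pathwise inequality for $r=2$.  Both routes avoid the trap you identify (the direct BDG on the $\lvert\cdot\rvert^{4}$-martingale would need sixth/eighth moments of $G$).  Your approach is a bit more elementary, at the price of a specific logical order (you need the non-supremum fourth-moment bound before the supremum second-moment bound); the paper's is self-contained step by step but uses BDG and the R\"udiger--Ziglio $L^{1}$ estimate.
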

\begin{proof}
Let $\Psi(\bu):=\lvert \bu\rvert^2$ and
\begin{equation*}
\begin{split}
g(s,z,\bu):=\langle G(z,\bu(s)),G(z,\bu(s))\rangle,\\
 f(z,s,\bu):=g(s,z,\bu) +2 \langle G(z,\bu(s)), \bu(s-)\rangle.\\
\end{split}
\end{equation*}
 Note that $(\bu,\tau_\infty)$, where $\tau_\infty=\lim_{n\uparrow \infty}\tau_n $ a.s., is the unique maximal solution to \eqref{Full-EQ}. Throughout let $n$ be a fixed positive integer.
   To shorten notation we define $t_n=t\wedge \tau_n$ for any $t\in [0,T]$.

 Estimates \eqref{WEAK-EST-1} can be proved by using the It\^o' formula to $[\Psi(\bu(t_n))]^{r}$, $r=1,2$, with $t_n=t\wedge \tau_n$ for every $t\in [0,T]$. For $r=1$ the same calculations
 with $N=Id$ as in proof of Theorem \ref{GLO-FULL-EQ} yields
 \begin{equation}\label{EST-GLO}
\begin{split}
 \me \sup_{s\in[0, t_n)} \Psi(\bu(s))+2\me \int_0^{t_n} \langle A\bu(s),  \bu(s)\rangle ds\le \me \Psi(\bu_0)
 + \bar{\ell}_1 \me \int_0^{t_n} \lvert \bu(s)\rvert^2 ds\\
 + \bar{\ell}_1 T +\me \sup_{s\in [0,t_n)}\biggl\vert  \int_0^{s}\int_Z f(z,s,\bu) \wie(dz,ds)\biggr\vert,
 \end{split}
\end{equation}
for any $t\in [0,T]$ and $n\ge 1$.
Arguing as in the proofs of Eq. \eqref{ineq-ST-te} and \eqref{ineq-ST-te-2} we obtain the following inequality
\begin{equation*}
 \begin{split}
  \me \sup_{s\in [0,t_n]}\biggl\lvert \int_0^s \int_Z f(z, s ,\bu)\wie(dz,ds)\biggr\rvert\le
   \Big[\frac{C^2  \bar{\ell}_1}{4 \eps}+  \bar{\ell}_2 \Big] \me \int_0^{t_n} \lvert \bu(s) \rvert^2 ds \\
  +\eps \me\biggl[\sup_{s\in [0,t_n]}\lvert \bu(s)\rvert^2\biggr],
 \end{split}
\end{equation*}
which along with \eqref{EST-GLO} implies that
\begin{equation*}
\begin{split}
 \me \sup_{s\in[0, t_n)} \Psi(\bu(s))+2\me \int_0^{t_n} \langle A\bu(s),  \bu(s)\rangle ds\le
 \me \Psi(\bu_0)+\eps \me\biggl[\sup_{s\in [0,t_n]}\lvert \bu(s)\rvert^2\biggr] \\
 +
  \Big[\bar{\ell}_1 \Big[\frac{C^2   \bar{\ell}_1}{4 \eps}+1\Big]+ \bar{\ell}_2 \Big]\me \int_0^{t_n} \lvert \bu(s) \rvert^2 ds.
 \end{split}
\end{equation*}
Using Assumption \eqref{lin-terms}, choosing $\eps=1/2$ and invoking the Gronwall lemma yield
\begin{equation}\label{EST-GLO-1}
 \me \sup_{s\in [0,t_n)}\lvert \bu(s)\rvert^2 + \me \int_0^{t_n} \lve \bu(s)\rve^2 ds\le \frac{\me \Psi(\bu_0)}{\min(\frac{1}{2}, 2\tilde{C}_A)}[e^{\ell T} +1],
\end{equation}
where $$\ell = \frac{}{\min(\frac{1}{2}, 2\tilde{C}_A)} \Big[\bar{\ell}_1 \Big[\frac{C^2   \bar{\ell}_1}{2 1}+1\Big]+ \bar{\ell}_2 \Big].$$
This completes the proof of the theorem for $r=1$.

Now, for $t\ge 0$ let $y(t):=\langle \bu(t),  \bu(t) \rangle$ and $\Psi^\prime(\bu(t))[h]=2 \langle  \bu(t), h \rangle$ for any $h\in \h$.

 First we should notice that by It\^o's formula and the assumption about $F$ in Proposition \ref{THM-EXIST} we have
 \begin{equation}\label{EST-GLO-2}
 \begin{split}
  y(t_n) +\int_0^{t_n} \Psi^\prime(\bu(s))[A\bu(s)] ds= y(0) +\int_0^{t_n} \int_Z g(s,z,\bu) \nu(dz) ds
  +\int_0^{t_n }\int_Z f(s,z,\bu) \wie(dz,ds).
  \end{split}
 \end{equation}
 By applying It\^o's formula to $[y(t)]^2=:z(t)$ we obtain
 \begin{equation}\label{EST-GLO-3}
 \begin{split}
  \me \sup_{r\in [0,t_n)}\biggl[z(r) +2 \int_0^{r} y(s) \Psi^\prime(\bu(s))[A\bu(s)] ds-2 \int_0^{r} \int_Z y(s) g(s,z,\bu) \nu(dz) ds\biggr]\\
  = \me \sup_{r\in [0,t_n)}\biggl[z(0)  +\int_0^{r}\int_Z [f(s,z,\bu)]^2  \nu(dz)ds\biggr]\\
  + \me \sup_{r\in [0,t_n)} \biggl[\int_0^{r}\int_Z \Big( [f(s,z,\bu)]^2 +2 y(s-)  f(s,z,\bu)\Big)\wie(dz,ds)\biggr].
  \end{split}
 \end{equation}
 By performing elementary calculation and using part \eqref{part-ii} of  Assumption \ref{assum-G} one can show that
 \begin{align}
   \me \int_0^{t_n }\int_Z [f(s,z,\bu)]^2  \nu(dz)ds \le & 2 \me \int_0^{t_n} \int_Z [\langle  G (z,\bu(s)), G(z,\bu(s))\rangle]^2 \nu(dz) ds\nonumber\\
   & \quad \quad + 2\me
\int_0^{t_n} \int_Z [\langle  \bu(s-), G(z,\bu(s))\rangle]^2 \nu(dz) ds,\nonumber \\
\le & 2 [\bar{\ell}_1+\bar{\ell}_2^2]\Big(T+ \me \int_0^{t_n} \lvert \bu(s) \rvert^4 ds\Big).\label{EST-GLO-4}
 \end{align}
 Similarly,
 \begin{equation}\label{EST-GLO-4-b}
  2\me \int_0^{t_n} \int_Z y(s) g(s,z,\bu) \nu(dz) ds\le  \bar{\ell}_1 \Big(T +\me \int_0^{t_n} \lvert \bu(s)\rvert^4 ds\Big)
 \end{equation}
Since $[\psi(s,z,\bu)]^2 >0 $, by using \cite[Theorem 3.10, Eq. (3.10)]{Rud+Zig} we derive that
 \begin{equation*}
 \begin{split}
  \me \sup_{r\in[0,t_n) } \biggl \lvert \int_0^{r}\int_Z  [f(s,z,\bu)]^2 \wie(dz,ds)\biggr\lvert \le \me \int_0^{t_n}\int_Z [f(s,z,\bu)]^2 \nu(dz)ds
 \end{split}
 \end{equation*}
and by arguing as above we infer that
\begin{align}
  \me \int_0^{t_n}\int_Z [f(s,z,\bu)]^2 \wie(dz,ds)\le
 2 ^2 [\bar{\ell}_1+\bar{\ell}_2^2]\Big(T+ \me \int_0^{t_n} \lvert \bu(s) \rvert^4 ds\Big)\label{EST-GLO-5}.
 \end{align}
 By using the BDG inequality and Cauchy inequality with epsilon we obtain
  \begin{align}
 \me \sup_{r\in[0,t_n) } \biggl \lvert \int_0^{r}\int_Z 2 y(s-)  f(s,z,\bu)\wie(dz,ds)\Biggr\lvert\le & 4K \me \biggl[\int_0^{t_n} \int_Z   [y(s)]^2
  \lvert f(s,z,\bu)\rvert^2 \Big)\nu(dz)ds \biggr]^\frac12,\nonumber \\
  \le & \frac{16 K^2 }{4\eps} \me \int_0^{t_n} \int_Z   \lvert f(s,z,\bu)\rvert^2 \Big)\nu(dz)ds\nonumber\\
 & \quad \quad \quad +\eps \me \sup_{s\in [0,t_n)}\lvert \bu(s)\rvert^4.
  \end{align}
And arguing as in \eqref{EST-GLO-5} we derive that
 \begin{equation}\label{EST-GLO-5-b}
  \begin{split}
   \me \sup_{r\in[0,t_n) } \biggl \lvert \int_0^{r}\int_Z 2 y(s-)  f(s,z,\bu)\wie(dz,ds)\Biggr\lvert-\eps \me \sup_{s\in [0,t_n)}\lvert \bu(s)\rvert^4
   \\ \le \frac{32K^2 (1+\bar{\ell}_1+\bar{\ell}_2^2)}{4\eps} \Big(T+\me \int_0^{t_n}  \lvert\bu(s)\rvert^4 ds\Big).
  \end{split}
 \end{equation}

 Plugging \eqref{EST-GLO-4}, \eqref{EST-GLO-4-b}, \eqref{EST-GLO-5} and \eqref{EST-GLO-5-b} in \eqref{EST-GLO-3}, using Assumption \ref{lin-terms}
 and choosing $\eps=1/2$ yield the existence of positive constants $\tilde{L}$, $\bar{\ell}$ such that
 \begin{equation*}
  \begin{split}
   \me \sup_{s\in[0,t_n)}\lvert \bu(s)\rvert^4 +\int_0^{t_n} \lvert \bu(s)\rvert^2 \lve \bu(s)\rve^2 ds\le
   \tilde{L} T+ \tilde{L} \me \int_0^{t_n} \lvert \bu(s) \rvert^4 ds
   +\bar{\ell} \me[\Psi(\bu_0)]^2.
  \end{split}
 \end{equation*}
Thanks to the Gronwall lemma we infer that
\begin{equation}
 \me \sup_{s\in[0,t_n)}\lvert \bu(s)\rvert^4 +\int_0^{t_n} \lvert \bu(s)\rvert^2\lve \bu(s)\rve^2 ds\le \big(\tilde{L}T+ \bar{\ell} \me [\Psi(\bu_0]^2\big)
 \big[e^{\tilde{L} T} +1\big].
\end{equation}
The above inequality completes the proof of \eqref{WEAK-EST-1} for $r=2$, and hence the first part of our theorem.

To prove the second part we will use \eqref{EST-GLO-1}. In fact, from \eqref{EST-GLO-1} we derive that
\begin{equation}\label{EST-GLO-6-a}
\begin{split}
 \me \biggl[\int_0^{t_n} \Psi^\prime (\bu(s))[A\bu(s)] ds\biggr]\le C \me [y(0)]^2+C \me \biggl[\int_0^{t_n} \int_Z g(s,z, \bu) \nu(dz) ds\biggr ]^2 \\
 +C \me \biggl[\int_0^{t_n} \int_Z f(s,z,\bu) \tilde{\eta}(dz,ds) \biggr]^2.
 \end{split}
\end{equation}
Note that the stochastic integral in the last term of the RHS of the above estimate is real-valued, so from It\^o's isometry we infer that
\begin{align*}
 \me \biggl[\int_0^{t_n} \int_Z f(s,z,\bu) \tilde{\eta}(dz,ds) \biggr]^2=\me \int_0^{t_n} \int_Z [f(s,z,\bu)]^2 \nu(dz)ds,\nonumber
\end{align*}
from which altogether with \eqref{EST-GLO-5} and \eqref{WEAK-EST-1} we derive that for any $t\ge 0$ there exists a constant $\tilde{C}>0$ such that
\begin{equation}\label{EST-GLO_6}
 \me \biggl[\int_0^{t_n} \int_Z f(s,z,\bu) \tilde{\eta}(dz,ds) \biggr]^2\le \tilde{C}
\end{equation}
for any $n\ge 1$. By imitating the proof of \eqref{EST-GLO-4} we infer that there exists $C>0$ such that
\begin{equation}
 \me \biggl[\int_0^{t_n} \int_Z g(s,z, \bu) \nu(dz) ds\biggr ]^2 \le C \Big(t+ \me \int_0^{t_n} \lvert \bu(s) \rvert^4 ds\Big),
\end{equation}
from which and \eqref{WEAK-EST-1} we deduce that for any $t\ge 0$ there exists $\tilde{C}>0$ such that
\begin{equation}\label{EST-GLO-7}
  \me \biggl[\int_0^{t_n} \int_Z g(s,z, \bu) \nu(dz) ds\biggr ]^2 \le \tilde{C},
\end{equation}
for any $n\ge 1$.
Taking \eqref{EST-GLO_6} and \eqref{EST-GLO-7} into \eqref{EST-GLO-6-a} implies that
\begin{equation}
\begin{split}
 \me \biggl[\int_0^{t_n} \Psi^\prime (\bu(s))[A\bu(s)] ds\biggr]\le C \me [y(0)]^2+2 \tilde{C}.
 \end{split}
\end{equation}
Thanks to this last estimate and the fact that $\langle A \bu, \bu \rangle \ge \tilde{C}_A \lVert \bu\rVert^2$ we easily derive that for any $t\ge 0$ there exists
$\tilde{C}>0$ such that for any $n\ge 1$
\begin{equation*}
 \me \biggl[\int_0^{t_n} \lVert \bu(s) \rVert^2 ds\biggr]\le  \tilde{C}.
\end{equation*}
This completes the proof of \eqref{WEAK-EST-2}, and hence the whole Proposition.
\end{proof}

Now we turn our attention to the existence and uniqueness of global solution.
\begin{thm}\label{GLO-FULL-EQ}
Assume that $F$ satisfies the assumptions of Proposition \ref{THM-EXIST} with $p=1$ and $\alpha\in [0, \frac12]$. Moreover, we suppose that there exists $\tilde{c}>0$ such that
\begin{equation}\label{Cond-GLO}
\begin{split}
 \lvert F(u)-F(v)\rvert \le \tilde{c}\biggl[\lvert u\rvert^{1-\alpha} \lve u\rve^\alpha \lve u-v\rve^{1-\alpha} \lve u-v\rve_\ast^\alpha
 + \lvert u-v\rvert^{1-\alpha} \lve u-v\rve^\alpha \lve v\rve^{1-\alpha} \lve v\rve_\ast^\alpha\biggr]
\end{split}
\end{equation}
for any $u,v\in \be$.\del{
 \begin{equation}\label{FNU0}
  \langle F(u), N u\rangle=0,
 \end{equation}
 for all $u\in H$.} Then Problem \eqref{Full-EQ} has a unique global solution.
\end{thm}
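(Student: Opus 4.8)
The plan is to show that the maximal local solution $(\bu,\tau_\infty)$ produced by Theorem \ref{MAX-FULL-EQ} does not explode before time $T$, i.e. that $\tau_\infty=T$ almost surely; uniqueness of the resulting global solution is then automatic from Theorem \ref{MAX-FULL-EQ}. Since, by Definition \ref{Def-Max-SOL}, $\lve\bu\rve_{X_t}\uparrow\infty$ as $t\uparrow\tau_\infty$ on the event $\{\tau_\infty<T\}$, everything reduces to the a priori bound $\sup_{t<T\wedge\tau_\infty}\lve\bu\rve_{X_t}<\infty$ a.s. To obtain it I would run an energy estimate at the level of the operator $N$ and control the nonlinearity with the sharpened condition \eqref{Cond-GLO}. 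As a preliminary, I note that the hypotheses of Theorem \ref{GLO-FULL-EQ} contain all those of Proposition \ref{THM-EXIST} (the orthogonality \eqref{FNU0}, the coercivity $\langle Au,u\rangle\ge\tilde{C}_A\lve u\rve^2$, and $\me\lvert\bu_0\rvert^4<\infty$); hence \eqref{WEAK-EST-1}--\eqref{WEAK-EST-2} hold for the stopping times $\tau_n=\inf\{t:\lve\bu\rve_{X_t}^2\ge n^2\}\wedge T$ with constants independent of $n$, and letting $n\uparrow\infty$, then $t\uparrow T$, and using monotone convergence I get, $\PP$-a.s.,
\[\sup_{s<T\wedge\tau_\infty}\lvert\bu(s)\rvert^2<\infty,\qquad \int_0^{T\wedge\tau_\infty}\lvert\bu(s)\rvert^2\lve\bu(s)\rve^2\,ds<\infty,\qquad \int_0^{T\wedge\tau_\infty}\lve\bu(s)\rve^2\,ds<\infty.\]

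Next I would apply It\^o's formula to $\Psi(\bu)=\langle N\bu,\bu\rangle$ on $[0,t\wedge\tau_n]$ exactly as in the proof of Lemma \ref{Lambda-ITS}, using from Assumption \ref{lin-terms} that $\Psi(\bu)\ge C_N\lve\bu\rve^2$ and $\langle A\bu,N\bu\rangle\ge C_A\lve\bu\rve_\ast^2$. The only dangerous term is $\int_0^{t\wedge\tau_n}\langle F(\bu),N\bu\rangle\,ds$, which I would estimate by taking $v=0$ in \eqref{Cond-GLO} (so, since $F(0)=0$ and $p=1$, $\lvert F(\bu)\rvert\le\tilde{c}\,\lvert\bu\rvert^{1-\alpha}\lve\bu\rve\,\lve\bu\rve_\ast^{\alpha}$), combining it with $\lvert N\bu\rvert\le\noe\lve\bu\rve_\ast$, and applying Young's inequality with conjugate exponents $\tfrac2{1-\alpha},\tfrac2{1+\alpha}$ (admissible because $1+\alpha<2$). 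This gives
\[2\lvert\langle F(\bu),N\bu\rangle\rvert\le C_A\lve\bu\rve_\ast^2+\varphi(s)\,\Psi(\bu(s)),\qquad \varphi(s):=C\,\lvert\bu(s)\rvert^2\bigl(1+\lve\bu(s)\rve^2\bigr),\]
and it is precisely the assumption $\alpha\le\tfrac12$ (with $p=1$) that makes the surplus power $\tfrac{2}{1-\alpha}-2=\tfrac{2\alpha}{1-\alpha}$ of $\lve\bu\rve$ no larger than $2$; by the preliminary step, $\int_0^{T\wedge\tau_\infty}\varphi\,ds<\infty$ a.s. The terms coming from $G$ are treated as in Lemma \ref{Lambda-ITS}: the It\^o (jump) correction is bounded via \eqref{Lipschitz-G-2} by $\nov\int_0^{t\wedge\tau_n}\lVert G(\bu)\rVert_{L^2(Z,\nu,\ve)}^2\,ds\le C\bigl(1+\int_0^{t\wedge\tau_n}\lve\bu\rve^2\,ds\bigr)$, while the stochastic term is a martingale $M$ that is genuinely square integrable once stopped at $\tau_n$ (because $\bu=\bu^n\in M^2(X_T)$ on $[0,\tau_n]$). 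Absorbing $C_A\int\lve\bu\rve_\ast^2\,ds$ into the left side and writing $W_t:=\Psi(\bu(t))+C_A\int_0^t\lve\bu\rve_\ast^2\,ds$, these estimates combine into $W_t\le W_0+cT+\int_0^t\bar{\varphi}(s)\,W_s\,ds+M_t$ for $t<\tau_\infty$, with $\bar{\varphi}:=\varphi+\text{const}\ge0$ and $\int_0^{T\wedge\tau_\infty}\bar{\varphi}\,ds<\infty$ a.s.

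I would then close the scheme with a stochastic Gronwall (integrating--factor) argument applied to $V_t:=W_0+cT+\int_0^t\bar{\varphi}W_s\,ds+M_t\ge W_t$: since $d\bigl(V_te^{-\int_0^t\bar{\varphi}}\bigr)\le e^{-\int_0^t\bar{\varphi}}\,dM_t$, one gets $W_t\le e^{\int_0^t\bar{\varphi}}\bigl(W_0+cT+\widetilde{M}_t\bigr)$ for $t<\tau_\infty$, where $\widetilde{M}_t:=\int_0^te^{-\int_0^s\bar{\varphi}\,dr}\,dM_s$ is again a local martingale. Because $\int_0^{T\wedge\tau_\infty}\bar{\varphi}\,ds<\infty$ a.s. and, by localising $M$ along the $\tau_n$ (on each $[0,\tau_n]$ it is an $L^2$-martingale), $\sup_{t<T\wedge\tau_\infty}\lvert\widetilde{M}_t\rvert<\infty$ a.s., the right-hand side is a.s. finite; together with $W_t\ge\min(C_N,C_A)\bigl(\lve\bu(t)\rve^2+\int_0^t\lve\bu\rve_\ast^2\,ds\bigr)$ this yields $\sup_{t<T\wedge\tau_\infty}\lve\bu\rve_{X_t}^2<\infty$ a.s. This contradicts the explosion characterisation of $\tau_\infty$ on $\{\tau_\infty<T\}$, so $\PP(\tau_\infty<T)=0$, i.e. $\tau_\infty=T$ a.s., and $\bu$ is the unique global solution of \eqref{Full-EQ}.

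The step I expect to be the main obstacle is the last one. Because of the superlinear growth of $\langle F(\bu),N\bu\rangle$, the Gronwall coefficient $\bar{\varphi}$ is genuinely random and is only almost surely integrable, not bounded by a deterministic constant, so no deterministic Gronwall is available; the argument has to combine the pathwise finiteness of $\int_0^{T\wedge\tau_\infty}\bar{\varphi}\,ds$ (coming from Proposition \ref{THM-EXIST}) with a stochastic Gronwall, keeping all constants independent of the truncation level $n$ and verifying that $M$, and its exponential transform $\widetilde{M}$, have well-behaved trajectories up to the (a priori possibly finite) time $\tau_\infty$ --- a technical point handled by the localisation already used in Lemma \ref{Weak-continuity}. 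Everything else (the It\^o computation, the $G$-estimates, and the reduction to $\tau_\infty=T$) is routine once the a priori estimates of Proposition \ref{THM-EXIST} are in hand.
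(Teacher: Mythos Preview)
Your strategy is genuinely different from the paper's, and the energy computation is set up correctly (the Young inequality with exponents $\tfrac{2}{1+\alpha},\tfrac{2}{1-\alpha}$ and the absorption into $\varphi(s)\Psi(\bu(s))$ are exactly right). The gap is precisely where you anticipate it: the claim ``by localising $M$ along the $\tau_n$, $\sup_{t<T\wedge\tau_\infty}\lvert\widetilde{M}_t\rvert<\infty$ a.s.'' is \emph{not} justified by localisation alone. Knowing that each $\widetilde{M}^{\tau_n}$ is an $L^2$-martingale only gives $\sup_{[0,\tau_n]}\lvert\widetilde{M}\rvert<\infty$ a.s.\ for each fixed $n$, with no uniformity in $n$; and controlling the predictable bracket $\langle\widetilde{M}\rangle_{\tau_\infty-}$ directly would require $\int_0^{\tau_\infty}\lve\bu(s)\rve^4\,ds<\infty$, which is circular since it involves $\sup_s\lve\bu(s)\rve^2$, the very quantity you are trying to bound. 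Your argument \emph{can} be repaired, but not by localisation: observe that your integrating-factor identity forces $\widetilde{M}_t\ge -(W_0+cT)$ for all $t<\tau_\infty$, so $\widetilde{M}+W_0+cT$ is a nonnegative local martingale, hence a supermartingale, and the supermartingale convergence theorem then yields the missing pathwise bound.

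The paper sidesteps this issue entirely by a different device. Instead of a pathwise stochastic Gronwall with random coefficient, it introduces a second family of stopping times
\[
\sigma_m=\inf\Bigl\{t:\int_0^t\lvert\bu(s)\rvert^2\lve\bu(s)\rve^{\frac{2\alpha}{1-\alpha}}\,ds\ge m\Bigr\},
\]
i.e.\ it stops the process when your $\int_0^t\varphi\,ds$ reaches level $m$. On $[0,\sigma_m\wedge\tau_n]$ the Young-inequality bound for $\langle F(\bu),N\bu\rangle$ becomes $\le C_A\lve\bu\rve_\ast^2+C\,m$, a \emph{deterministic} absorption; after taking expectation (which kills the martingale term outright) one applies ordinary Gronwall and obtains $\me\bigl[\lve\bu(t\wedge\sigma_m\wedge\tau_n)\rve^2+\int\lve\bu\rve_\ast^2\,ds\bigr]\le C_m$. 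Splitting $\mathbb{P}(\tau_n<t)$ over $\{\sigma_m\ge T\}$ and its complement, Chebyshev gives $\mathbb{P}(\tau_n<t)\le C_m/n^2+\mathbb{P}(\sigma_m<T)$; one lets $n\to\infty$ first, and then Proposition~\ref{THM-EXIST} bounds $\mathbb{P}(\sigma_m<T)\le\tilde C/m\to0$. What this buys over your route is that no pathwise control of the stochastic integral is ever needed; what your route buys (once the supermartingale fix is supplied) is a genuinely pathwise bound on $\lve\bu\rve_{X_t}$ rather than merely $\mathbb{P}(\tau_\infty<T)=0$.
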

\begin{proof}
Let $\bu$ be the stochastic process we constructed in Theorem
\ref{MAX-FULL-EQ} and $$\Lve N\Lve:=\max\left(\noe, \nov\right).$$
 Let $(\tau_n)_{n\ge 1}$ be a sequence of stopping times defined by
 \begin{equation*}
  \tau_n=\inf\{t\in [0,T]: \lve \bu(t) \lve^2 +\int_0^t \lve \bu(s) \rve_\ast^2 ds \ge n^2\}.
 \end{equation*}
 Note that $(\bu,\tau_\infty)$, where $\tau_\infty=\lim_{n\uparrow \infty}\tau_n $ a.s., is the unique maximal solution to \eqref{Full-EQ}.
 To deal with the structure of the nonlinearity $F$ (see Eq. \eqref{Cond-GLO}) we introduce another sequence of stopping times $(\sigma_m)_{m\ge 1}$ defined by
 \begin{equation*}
  \sigma_m=\inf \biggl\{t\in [0,T]:\int_0^t \lvert \bu(s) \rvert^2 \lVert \bu(s)\rVert^{\frac{2\alpha}{1-\alpha}} ds\ge m \biggr\}, \text{ for any } m\ge 1.
 \end{equation*}

 To shorten notation we define $t_{m,n}=t\wedge (\sigma_m \wedge \tau_n)$ for any $t\in [0,T]$, $n\ge 1$ and $m\ge 1$.
 Let
\begin{equation*}
\begin{split}
 f(z,s,\bu):=\langle N G(z,\bu(s)),G(z,\bu(s))\rangle\\
 +2 \langle G(z,\bu(s)), N\bu(s-)\rangle,
\end{split}
\end{equation*}
and
\begin{equation*}
 g(z,s,\bu):= \langle N G(z,\bu(s)), G(z,\bu(s))\rangle.
\end{equation*}
Applying It\^o's formula to $\Psi(u)=\langle u, Nu\rangle$ we obtain
\begin{equation*}
\begin{split}
 \Psi(\bu(t_{m,n}))=\Psi(\bu_0)-2\int_0^{t_{m,n}} \Big[\langle A\bu(s)+F(\bu(s)), N \bu(s) \rangle \Big] ds+
 \int_0^{t_{m,n}}\int_Z g(z,s, \bu) \nu(dz) ds \\
 +\int_0^{t_{m,n}}\int_Z f(z,s,\bu) \wie(dz,ds),
 \end{split}
\end{equation*}
for any $t\in [0,T]$.
For any $\delta>0$ and $p,q\ge 1$ with $p^{-1}+q^{-1}=1$ let $C(\delta,p,q)$ be the constant from the Young inequality  $$ab\le C(\delta,p,q)a^p+\delta
b^q.$$ From Eq. \eqref{Cond-GLO} and the above Young inequality with $p=\frac{2}{1+\alpha}$,
$q=\frac{2}{1-\alpha}$, and $\delta=C_A$ we obtain
\begin{equation*}
 \lvert 2\langle F(\bu(s)), N\bu(s)\rangle \rvert \le C(C_A,p,q) [2 \tilde{c} \Lve N\Lve ]^q  \lvert \bu(s) \rvert^2 \lVert \bu(s)\rVert^{\frac{2\alpha}{1-\alpha}  }
 +C_A
 \lve \bu(s)\rve_\ast^2.
\end{equation*}
By making use of the definition of $\sigma_m$ we get that
\begin{equation}\label{Cond-GLO-2}
 2 \lvert  \int_0^{t_{m,n}} \langle F(\bu(s)), N\bu(s)\rangle ds \rvert \le
 C(C_A,p,q) [2 \tilde{c} \Lve N\Lve ]^q  m T
 +C_A\int_0^{t_{m,n}}
 \lve \bu(s)\rve_\ast^2.
\end{equation}
From the assumption on $G$ we derive that
\begin{equation}\label{Cond-GLO-3}
 \int_0^{t_{m,n}}\int_Z g(z,s, \bu) \nu(dz) ds\le \Lve N\Lve \tilde{\ell}_1 T+\Lve N\Lve \tilde{\ell}_1 \int_0^{t_{m,n}} \lve \bu(s)\rve^2 ds.
\end{equation}

By taking the mathematical expectation to both sides of this estimate and by using Assumption \ref{lin-terms} altogther with Eqs.
\eqref{Cond-GLO-2}, \eqref{Cond-GLO-3}
 we infer that
\begin{equation*}
\begin{split}
 \me [\lve \bu(t_{m,n})\rve^2 ] +\me \int_0^{t_{m,n}} \lve \bu(s)\rve_\ast^2 ds\le \tilde{L}^{-1} \Lve N\Lve \tilde{\ell}_1
 \me \int_0^{t_{m,n}} \lve \bu(s)\rve^2 ds
 \\
 +\tilde{L}^{-1} [\me \Psi(\bu_0)+ \Lve N\Lve \tilde{\ell}_1 T+C_{mA}T],
 \end{split}
\end{equation*}
where $\tilde{L}=\min(C_N, C_A)$ and $C_{mA}:=C(C_A,p,q) [2 \tilde{c} \Lve N\Lve ]^q  m $.
From the Gronwall's lemma we infer that
\begin{equation}\label{ST15}
 \me [ \lve\bu(t_{m,n})\rve^2 ] +\me \int_0^{t_{m,n}}  \lve \bu(s)\rve_\ast^2 ds\le \tilde{L}^{-1} [\me \Psi(\bu_0)+ \Lve N\Lve \tilde{\ell}_1 T+C_{mA}T]
 e^{\tilde{L}^{-1} \Lve N\Lve \tilde{\ell}_1 t_{m,n}}[1+\Lve N\Lve \tilde{\ell}_1 T].
\end{equation}
Next, note that
\begin{align*}
 \mathbb{P}(\tau_n<t)=& \mathbb{P}(\{\tau_n<t\}\cap (\Omega_m\cup \Omega_m^c)),\\
 =& \mathbb{P}(\{\tau<t_n \}\cap \Omega_m)+ \mathbb{P}(\{\tau_n<t\}\cap \Omega_m^c),
\end{align*}
where $\Omega_m=\{\sigma_m\ge T\}$, $m\ge 1$. Now, by arguing as
in \cite[pp. 123]{ZB et al 2005} we have
\begin{equation*}
\begin{split}
\mathbb{P}\left(\tau_n<t\right)&\le \frac 1 {n^2} \mathbb{E}\biggl(1_{\{\tau_n<t\}\cap \Omega_m}
\biggl[\lve \bu({t_{m,n}}) \lve^2 +\int_0^{t_{m,n}} \lve \bu(s) \rve_\ast^2 ds
\biggr]\biggr)
+\mathbb{P}[\Omega_m^c],\\
&\le \frac 1 {n^2} \mathbb{E}\biggl[\lve \bu({t_{m,n}}) \lve^2 +\int_0^{t_{m,n}} \lve \bu(s) \rve_\ast^2 ds\biggr] +
\frac1m \me \int_0^{t_{m,n}} \lvert \bu(s) \rvert^2 \lve\bu(s)\rve^\frac{2\alpha}{1-\alpha} ds.
\end{split}
\end{equation*}
Thanks to Eq.
\eqref{ST15}
\begin{equation*}
 \begin{split}
  \mathbb{P}\left(\tau_n<t\right)&\le  \frac 1 {n^2} \tilde{L}^{-1} [\me \Psi(\bu_0)+ \Lve N\Lve \tilde{\ell}_1 T+C_{mA}T]
 e^{\tilde{L}^{-1} \Lve N\Lve \tilde{\ell}_1 T} +
\frac1m \me \int_0^{t_{m,n}} \lvert \bu(s) \rvert^2 \lve\bu(s)\rve^\frac{2\alpha}{1-\alpha} ds,
 \end{split}
\end{equation*}
from which we derive that
\begin{equation*}
 \begin{split}
\lim_{n\toup \infty }\mathbb{P}\left(\tau_n<t\right)\le\frac1m \biggl\{\me \big[\sup_{s\in [0,t_{m,n}]} \lvert \bu(s) \rvert^4 \big]
 +\Big(\me \Big[\int_0^{t_{m,n}}  \lVert \bu(s)\rVert^2 ds\Big]^2\Big)^\frac{2\alpha}{1-\alpha}\biggr\}.
 \end{split}
\end{equation*}
Since $\alpha\in [0,\frac12]$ it follows from Proposition \ref{THM-EXIST} (see \eqref{WEAK-EST-1}-\eqref{WEAK-EST-2}) that the solution $\bu$
satisfies
\begin{equation*}
\me [\sup_{s\in [0,t_{m,n}]} \lve \bu(s) \rve^4 ] +\biggl(\me \biggl[\int_0^{t_{m,n}}  \lVert \bu(s)\rVert^2 ds\biggr]^2\biggr)^\frac{2\alpha}{1-\alpha} \le \tilde{C}.
\end{equation*}
Hence, combining this latter equation with the former one yields that
$$\lim_{n\rightarrow \infty}\mathbb{P}\left(\tau_n<t\right)=0,$$
from which we derive that $\mathbb{P}\Big(\tau_\infty<T\Big)=0 $ for any $T>0$. This implies that $\bu$ is a global solution.
\del{
From this inequality, \eqref{FNU0} and the assumption on $G$ we derive that
\begin{equation}\label{GLO-1}
\begin{split}
 \Psi(\bu(t_{m,n}))+2\int_0^{t_{m,n}} \langle A\bu(s), N \bu(s)\rangle ds\le \Psi(\bu_0)
 +\Lve N\Lve \tilde{\ell}_1 \int_0^{t_{m,n}} \lvert \bu(s)\rvert^2 ds\\
 +\Lve N\Lve \tilde{\ell}_1 T +\int_0^{t_{m,n}}\int_Z \psi(z,s,\bu) \wie(dz,ds),
 \end{split}
\end{equation}
for any $t\in [0,T]$ and $n\ge 1$. Since the last term of the RHS of \eqref{GLO-1} defines a sequence of \cadlag martingales with zero mean,
by taking the mathematical expectation to both sides of \eqref{GLO-1} we obtain that
\begin{equation*}
 \EE \Psi(\bu(t_{m,n}))+2\me \int_0^{t_{m,n}}\langle \bu(s), N \bu(s)\rangle ds\le \Psi(\bu_0)+\Lve N\Lve \tilde{\ell}_1 T +\Lve N\Lve \tilde{\ell}_1 \int_0^{t_{m,n}}\lvert \bu(s)\rvert^2 ds.
\end{equation*}
Owing to Assumption \ref{lin-terms} we infer from the last inequality that
\begin{equation}\label{GLO-2}
 C_N \me\lvert \bu(t_{m,n})\rvert^2 +2 C_A \int_0^{t_{m,n}} \lve \bu(s)\rve^2 ds\le  \Psi(\bu_0)+\Lve N\Lve \tilde{\ell}_1 T+\Lve N\Lve \tilde{\ell}_1 \int_0^{t_{m,n}}\lvert \bu(s)\rvert^2 ds.
\end{equation}
The Gronwall inequality all together with \eqref{GLO-2} yields
\begin{equation}\label{GLO-3}
 \me \lvert \bu(t_{m,n})\rvert^2 + \frac{2 C_A}{C_N}\int_0^{t_{m,n}} \lve \bu(s) \rve^2 ds\le \frac{\Psi(\bu_0)+\Lve N\Lve \tilde{\ell}_1 T}{C_N}
 \biggl(1+ e^{\frac{\Lve N\Lve \tilde{\ell}_1}{C_N}T }\biggr),
\end{equation}
for any $t\in [0,T]$ and $n\ge 1$.

By arguing as in \cite[pp. 123]{ZB et al 2005} we have
\begin{equation*}
\begin{split}
\mathbb{P}\left(\tau_n<t\right)&\le \mathbb{E}\biggl(1_{\{\tau_n<t\}} \Big[\lvert \bu(t_{m,n})\rvert^2 + \frac{2 C_A}{C_N}\int_0^{t_{m,n}} \lve \bu(s) \rve^2 ds\Big] \biggr),\\
\le & \frac1{n^2} \EE \Big( \lvert \bu(t_{m,n})\rvert^2 + \frac{2 C_A}{C_N}\int_0^{t_{m,n}} \lve \bu(s) \rve^2 ds\Big).
\end{split}
\end{equation*}
Thanks to \eqref{GLO-3}
\begin{equation*}
 \begin{split}
  \mathbb{P}\left(\tau_n<t\right)&\le  \frac 1{n^2} \frac{\Psi(\bu_0)+\Lve N\Lve \tilde{\ell}_1 T}{C_N}\Big(1+e^{\frac{\Lve N\Lve \tilde{\ell}_1}{C_N}T }\Big).
 \end{split}
\end{equation*}
This inequality implies that
$$\lim_{n\uparrow \infty}\mathbb{P}\left(\tau_n<t\right)=0,$$
from which $\mathbb{P}\Big(\tau_\infty<T\Big)=0 $ follows.}
\end{proof}

\begin{Rem}\label{REM-GLO}
All of our results in this section remain true if we replace $F(u)$ by $B(u)+R(u)$ with $R\in \mathcal{L}(H,H)$ and $B$ satisfying the assumptions of Theorem
\ref{MAX-FULL-EQ} and Theorem \ref{GLO-FULL-EQ}.
\end{Rem}

\section{Examples}\label{EXAM}
The examples, notations and references used in this section are
taken from \cite{Chueshov}.
\subsection{Notations}\label{notation}
 Let $n\in \{2,3\}$ and assume that $\MO \subset \mathbb{R}^n$ is a Poincar\'e's domain (its definition is given below) with
 boundary $\partial \MO$ of class $\mathcal{C}^\infty$.
 For any $p\in [1,\infty)$ and $k\in \mathbb{N}$,  $\el^p(\MO)$ and
$\mathbb{W}^{k,p}(\MO)$ are the well-known Lebesgue and Sobolev
spaces, respectively, of $\mathbb{R}^n$-valued functions. The
corresponding spaces of scalar functions we will denote by
standard letter, e.g. ${W}^{k,p}(\MO)$.

A domain $\CO\subset \RR^d$ is called a Poincar\'e's domains if following Poincar\'e's inequality holds
\begin{equation}\label{Poincare}
 \lvert \bu \rvert \le c \lvert \nabla \bu \rvert, \text{ for all } \bu \in H^1(\CO).
\end{equation}

 For $p=2$ we denote $\mathbb{W}^{k,2}(\MO)=\bh^k$ and its norm are denoted by $\lve \bu
 \rve_k$. By $\bh^1_0$ we mean the space of functions in $\bh^1$
 that vanish on the boundary on $\MO$; $\bh^1_0$ is a Hilbert space when endowed with the scalar product induced by that of $\bh^1$.
The usual scalar product on $\el^2$ is denoted by $\langle
u,v\rangle$ for $u,v\in \el^2$. Its associated norm is $\lvert
u\rvert$, $u\in \el^2$. We also introduce the following spaces
 \begin{align*}
\mathcal{V}_1& =\left\{ \bu\in [\mathcal{C}_{c}^{\infty }(\MO,\mathbb{R}^n)]\,\,\text{such that}%
\,\,\nabla \cdot \bu=0\right\} \\
\mathbf{V}_1& =\,\,\text{closure of $\mathcal{V}$ in }\,\,\mathbb{H}_0^{1}(\MO) \\
\mathbf{H}_1& =\,\,\text{closure of $\mathcal{V}$ in
}\,\,\mathbb{L}^{2}(\MO).
\end{align*}
We also consider the Hilbert spaces  $\h_{2}=\h_{1}$ and
$\bve_2= \bh^1 \cap \h_{2}$.

Let $(e_1, e_2)$ be the standard basis in
 $\RR^2$  and $x=(x^1,x^2)$ an element of $\RR^2$.
When $\CO =(0, l) \times (0, 1)$ is a rectangular   domain  in the
vertical  plane we consider the following spaces
\begin{align*}
\h_{3} =  & \left\{u\in \mathbb{L}^2,\; \text{div} u=0, \;
u^2 |_{x^2=0}=u^2 |_{x^2=1}=0,\; u^1 |_{x^1=0}=u^1 |_{x^1=l}
  \right\}    \\
\end{align*}
and $\h_{4}=    L^2(\CO)$.
We also denote
 \begin{align*}
\bve_3 =  & \left\{u\in \h_3\cap\bh^1,\;  u |_{x^2=0}=u |_{x^2=1}=0,\;
 u \; \mbox{is $l$-periodic in}\; x^1  \right\},    \\
\bve_4=  & \left\{\theta \in H^1(\CO),\;  \; \theta|_{x^2=0}=\theta|_{x^2=1}=0,\;
\theta \; \mbox{is $l$-periodic in}\; x^1
\right\},\\
\h_5= &\h_3,\\
\bve_5= &\h_5 \cap \bh^1.
\end{align*}
Let $\Pi_i: \el^2 \rightarrow \h_i$ be the projection
from $\el^2$ onto $\h_i$, $i=1,2,3,4,5$. We denote by $\rA_i$ the
Stokes operator defined by
\begin{equation}\label{STOKES}
\begin{cases}
 D(\rA_i)=\{ u\in \h_i, \; \Delta u \in \h_i\},\\
 \rA_i u =-\Pi_i\Delta u, \; u\in D(\rA_i),
\end{cases}
\end{equation}
$i=1, \ldots, 5$. In all cases the $\rA_i$-s are self-adjoint, positive linear operators on $\h_i$.
Finally we set $\bbe_i= D(\rA_i)$, $i \in \{1,2,3,4, 5\}$. Note that $\bbe_i\subset \bh^2 \cap \bve_i$, $i=1,2,3,5$ and $\bbe_4\subset H^2 \cap \bve_4.$

We endow the spaces $\h_i$, $i \in \{1,2,3,4, 5\}$, with the scalar product and norm of $\el^2$. We equip the space $\bve_i$, $i \in \{1,2,3,4, 5\}$,
with the scalar product
$\langle \rA_i^\frac 12 \bu, \rA_i^\frac 12  \bv\rangle$ which is equivalent to the
$\bh^1(\MO)$-scalar product on $\bve_i$. The spaces $\bbe_i$, $i \in \{1,2,3,4, 5\}$ are equipped with the norm $\lvert \rA_i \bu \rvert$ which is equivalent to the $\bh^2$-norm on
$\bbe_i$.
\begin{Rem}
 In the case of an general unbounded domain we equip the space $\bve_i$, $i \in \{1,2,3,4, 5\}$,
with the scalar product
$\langle (\Id+\rA_i)^\frac 12 \bu, (\Id+\rA_i)^\frac 12  \bv\rangle$. The spaces $\bbe_i$, $i \in \{1,2,3,4, 5\}$ are equipped with the norm $\lvert (\Id+\rA_i) \bu \rvert$ which is equivalent to the $\bh^2$-norm on
$\bbe_i$.
\end{Rem}


Next we define two trilinear forms $b_1(\cdot, \cdot, \cdot)$ and $b_2(\cdot, \cdot, \cdot)$ by setting
\begin{align}
 b_1(\bu, \bv, \bw)=\sum_{i,j=1}^n \int_O \bu^i(x) \frac{\partial}{\partial x_i}\bv^j(x) \bw^j(x) dx, \text{ for any }
 (\bu, \bv, \bw) \in \el^4\times \mathbb{W}^{1,4} \times \el^2,\label{TRI-B1}\\
 b_2(\bu, \th_2, \th_3)=\sum_{i=1}^n \int_O \bu^i(x) \frac{\partial}{\partial x_i}\th_2(x) \th_3(x) dx, \text{ for any }
 (\bu, \th_2, \th_3) \in \el^4\times {W}^{1,4} \times L^2.\label{TRI-B2}
\end{align}
%
Recall that for $\alpha=\frac{n}{4}$, the following estimate, valid for all $\bu\in \bh^1$ (or $\bu \in H^1$), is a special case of Gagliardo-Nirenberg's inequalities:
\begin{align}
\lve \bu\rve_{\el^4}\le \lvert \bu\rvert^{1-\alpha} \lvert \nabla \bu
\rvert^\alpha. \label{GAG-l4}
\end{align}
The inequality \eqref{GAG-l4} can be written in the spirit of
the continuous embedding
\begin{equation}\label{SOB-EM}
\bh^1\subset \el^4.
\end{equation}
Using Cauchy-Schwarz inequality, \eqref{GAG-l4} and \eqref{SOB-EM} in \eqref{TRI-B1}-\eqref{TRI-B2} we derive that
for any $(\bu, \bv, \bw) \in \bh^1 \times \bh^2 \times \el^2$
\begin{align}
 \lvert b_1(\bu, \bv, \bw)\rvert \le c \lve \bu \rve_{\bh^1}\,\, \lvert \nabla \bv \rvert^{1-\alpha} \,\, \lvert D^2 \bv \rvert^\alpha \,\, \lvert \bw \rvert
 \text{ for any $(\bu, \bv, \bw) \in \bh^1 \times \bh^2 \times \el^2$,}\label{BIL-B1} \\
 \lvert b_2(\bu, \th_2, \th_3) \rvert \le c \lve \bu \lve_{\bh^1}\,\, \lvert \nabla \th_2 \rvert^{1-\alpha}\,\,
 \lvert D^2 \th_2 \rvert^\alpha \,\, \lvert \th_3 \rvert
 \text{ for any $(\bu, \th_2, \th_3) \in \bh^1 \times H^2 \times L^2$.} \label{BIL-B2}
\end{align}
From Eq. \eqref{BIL-B1} (resp., Eq. \eqref{BIL-B2}) we infer that there exists a bilinear map $B_1(\cdot, \cdot)$ (resp.,  $B_2(\cdot, \cdot)$)
defined on $\bve_i \times \bbe_i$ and taking values in $\h_i$, for appropriate values of $i$. Moreover, there exist $c>0$ such that
\begin{align}
 \lvert B_1(\bu, \bv)\lvert \le c \lve \bu \rve_{\bh^1} \lve\,\, \lve \bv \lve_{\bh^1}^{1-\alpha} \lve \bv \lve_{\bh^2}^\alpha, \text{ for any }
 (\bu, \bv) \in \bve_i\times \bbe_i, \label{BIL-B1-A}\\
 \lvert B_2(\bu, \th_2)\lvert \le c \lve \bu \rve_{\bh^1} \lve\,\, \lve \th_2 \lve_{\bh^1}^{1-\alpha} \lve \th_2 \lve_{\bh^2}^\alpha, \text{ for any }
 (\bu, \bv) \in \bve_i\times \bbe_i, \label{BIL-B1-B}
\end{align}
for appropriate values of $i$. Note that using Cauchy-Schwarz inequality, \eqref{GAG-l4} and \eqref{SOB-EM} in \eqref{TRI-B1}-\eqref{TRI-B2} we also derive that
\begin{align}
 \lvert B_1(\bu, \bv)\lvert \le c \lvert \bu \rvert^{1-\alpha} \lve \bu \rve^\alpha_{\bh^1} \lve\,\, \lve \bv \lve_{\bh^1}^{1-\alpha} \lve \bv \lve_{\bh^2}^\alpha, \text{ for any }
 (\bu, \bv) \in \bve_i\times \bbe_i, \label{BIL-B1-C}\\
 \lvert B_2(\bu, \th_2)\lvert \le c \lvert \bu \rvert^{1-\alpha} \lve \bu \rve^\alpha_{\bh^1}\,\, \lve \th_2 \lve_{\bh^1}^{1-\alpha} \lve \th_2 \lve_{\bh^2}^\alpha, \text{ for any }
 (\bu, \bv) \in \bve_i\times \bbe_i, \label{BIL-B1-D}
\end{align}
for appropriate values of $i$.
%
\subsection{Stochastic hydrodynamical systems}
 In this subsection we use exactly the same notations as used in
 \cite{Chueshov}.
\subsubsection{Stochastic Navier-Stokes Equations}
Let  $\CO$ be a bounded, open and simply connected domain  of $\RR^n$, $n=2,3$. The boundary $\partial \CO$ of $\CO$ is assumed to be smooth. Let $(Z, \CZ, \nu)$ be a measure space where the $\nu$ is a $\sigma$-finite, positive measure
and $\tilde{\eta}$ be a
 compensated Poisson random measure having intensity measure $\nu$ defined on filtered complete probability space $\mathfrak{P}=(\Omega,\mathcal{F},\FF,\mathbb{P})$,
 where the filtration $\FF=(\mathcal{F}_t)_{t\geq 0}$ satisfies the usual conditions.
We consider the Navier-Stokes equation with
the Dirichlet (no-slip) boundary conditions:
\begin{equation} \label{1.1.0}
du +\Big[-\kappa \Delta u + u\nabla u + \nabla p \Big]dt=\int_Z \tilde{G}(t,\bu(t),z) \tilde{\eta}(dz,dt) , \quad \mbox{\rm div}\, u=0
~~\mbox{ in }~~D,\qquad u=0\quad\mbox{on}\quad \partial \CO,
\end{equation}
where $u= (u^1(x,t), u^2(x,t))$ is the velocity of a fluid,   $p(x,t)$ is the pressure,
 $\kappa$ the kinematic viscosity. Here $\int_Z \tilde{G}(t,u(t),z) \tilde{\eta}(dz,dt) $ represents a state-dependent random external forcing of jump type.

 Let $\h=\h_1$, $\bve=\bve_1$ and $\bbe=\bbe_1$ where the hilbert spaces $\h_i$, $\bve_i$ and $\bbe_i$ are defined as in Eq. \eqref{STOKES} of Subsection \ref{notation}.
The norms of $\h$, $\ve$ and $\bbe$ are denoted by $\lvert\cdot\rvert$, $\lve \cdot \rve$, $\lve \cdot \rve_\ast$, respectively.

 Let $A=\rA_1$ and $B=B_1$ be the linear and bilinear maps
 defined in Subsection \ref{notation}. We also set $N=A$. Note that in this case  $N$ is self-adjoint and $N\in \mathcal{L}(\bbe, \h)\cap \mathcal{L}(\bve, \bve^\ast)$.
%
%
We suppose that
 $\tilde{G}$ satisfies the following sets of conditions.
 \begin{assum}\label{assum-G-Benard}
 We assume that $\tilde{G}$ maps $\ve$ into $L^{2p}(Z,\nu, \ve)$ and there exists a constant $\ell_p>0$ such that
\begin{equation}\label{eqn-local Lipschitz-G-BEN}
\lVert \tilde{G}(x)-\tilde{G}(y)\rVert^{2p}_{L^{2p}(Z,\nu, \ve)}\le \ell^p_p \rve x-y\rve^{2p},
\end{equation}
for any $x, y\in \ve$ and $p=1,2$.

Note that this implies in particular that there exists a constant $\tilde{\ell}_p>0$ such that
\begin{equation}\label{Lipschitz-G-2-BEN}
\lVert \tilde{G}(x)\rVert^{2p}_{L^{2p}(Z,\nu, \h)}\le \tilde{\ell}^p_p(1+ \rve x\rve^{2p}),
\end{equation}
for any $x\in \ve$ and $p=1,2$ .
\end{assum}

 By setting $R\equiv 0$ and projecting on the space of divergence free vector fields the system \eqref{1.1.0} can be rewritten in the following abstract
 form
 \begin{equation}
 \begin{split}\label{Hydro-1}
  & d \bu+ [A\bu+B(\bu,\bu) +R(\bu)] dt = \int_Z \tilde{G}(t,\bu(t),z) \tilde{\eta}(dz,dt),\\
  & \bu(0)=\xi,
 \end{split}
 \end{equation}

\begin{thm}\label{Thm-NS}
 The stochastic Navier-Stokes problem \eqref{Hydro-1} admits a local maximal strong solution which is global if $n=2$.
\end{thm}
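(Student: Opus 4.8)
The plan is to recognise that, with $R\equiv 0$, the abstract equation \eqref{Full-EQ} specialises to \eqref{Hydro-1} under the identifications $\h=\h_1$, $\ve=\bve_1$, $\be=\bbe_1$, $A=\rA_1$, $N=A$, $F=B=B_1(\cdot,\cdot)$ and $G=\tilde G$, and then to invoke the abstract results of Section \ref{MAX-GLO}: Theorem \ref{MAX-FULL-EQ} for the existence and uniqueness of a maximal local solution, and Theorem \ref{GLO-FULL-EQ} together with Proposition \ref{THM-EXIST} for globality when $n=2$. The embeddings $\be\subset\ve\subset\h\subset\ve^\ast\subset\be^\ast$ are the ones recorded in Subsection \ref{notation} and they are continuous and dense, so the whole proof reduces to checking that Assumptions \ref{lin-terms}--\ref{assum-G} (and, for $n=2$, the stronger hypotheses of Proposition \ref{THM-EXIST}) are satisfied by this data, and then reading off the conclusion.

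First I would verify Assumption \ref{lin-terms}. The Stokes operator $\rA_1$ is self-adjoint and positive on $\h_1$, hence so is $N=A$; with the equivalent norms fixed in Subsection \ref{notation} one has $N\in\mathcal L(\be,\h)\cap\mathcal L(\ve,\ve^\ast)$, and $\langle Au,Nu\rangle=|\rA_1 u|^2\ge C_A\|u\|_\ast^2$, $\langle Nu,u\rangle=|\rA_1^{1/2}u|^2\ge C_N\|u\|^2$, which is exactly the required coercivity. Next, Assumption \ref{assum-F} for $F=B$: one has $B(0,0)=0$, and writing $B(y,y)-B(x,x)=B(y-x,y)+B(x,y-x)$ and applying the bilinear estimate \eqref{BIL-B1-A} to each summand (keeping the $\bh^2$-factor on the second argument, which lies in $\bbe_1$) yields \eqref{eqn-local Lipschitz-F} with $p=1$ and $\alpha=\tfrac n4\in[0,1)$. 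Assumption \ref{assum-G} for $G=\tilde G$ is precisely the content of Assumption \ref{assum-G-Benard} together with the continuity of $\ve\hookrightarrow\h$: part \eqref{part-i} is \eqref{eqn-local Lipschitz-G-BEN}, the growth bound \eqref{Lipschitz-G-2} follows as in the abstract setting, and part \eqref{part-ii} follows from \eqref{eqn-local Lipschitz-G-BEN}--\eqref{Lipschitz-G-2-BEN}. With these checks in place, Theorem \ref{MAX-FULL-EQ} delivers a unique maximal local solution $(\bu,\tau_\infty)$ in both dimensions $n=2$ and $n=3$, which is the first half of the statement.

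To upgrade this to a global solution when $n=2$ I would verify the extra requirements of Proposition \ref{THM-EXIST} and Theorem \ref{GLO-FULL-EQ}. The divergence-free structure gives the classical cancellation $\langle B(u,u),u\rangle=b_1(u,u,u)=0$, i.e.\ \eqref{FNU0}, while $\langle Au,u\rangle=|\rA_1^{1/2}u|^2\ge\tilde C_A\|u\|^2$; I would also impose $\me|\bu_0|^4<\infty$ on the initial datum. Since $n=2$ forces $\alpha=\tfrac n4=\tfrac12\in[0,\tfrac12]$ and $p=1$, it only remains to establish \eqref{Cond-GLO}: decomposing $B(u,u)-B(v,v)=B(u,u-v)+B(u-v,v)$ and estimating the two terms by \eqref{BIL-B1-C} produces exactly the right-hand side of \eqref{Cond-GLO}. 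Then Theorem \ref{GLO-FULL-EQ} gives $\mathbb P(\tau_\infty<T)=0$ for every $T>0$, i.e.\ the maximal solution is global. In three dimensions this last step is the one that fails, because $\alpha=\tfrac34>\tfrac12$; this is exactly why globality is asserted only for $n=2$.

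The only substantive point in this programme is the verification that the Navier--Stokes nonlinearity $B_1$ obeys the local-Lipschitz structure \eqref{eqn-local Lipschitz-F} (and, in the 2D case, \eqref{Cond-GLO}); everything else is bookkeeping with the norm equivalences of Subsection \ref{notation} and with the standard properties of the Stokes operator. That verification rests on the Gagliardo--Nirenberg inequality \eqref{GAG-l4}, equivalently the embedding $\bh^1\subset\el^4$, and the consequent bilinear bounds \eqref{BIL-B1-A} and \eqref{BIL-B1-C}; matching the exponents pins $\alpha$ to the value $\tfrac n4$, and it is precisely the constraint $\alpha\le\tfrac12$ imposed in Theorem \ref{GLO-FULL-EQ} that confines the global statement to dimension two.
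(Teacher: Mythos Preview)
Your proposal is correct and follows essentially the same route as the paper: verify Assumption~\ref{assum-F} for $F=B$ via the bilinear estimate \eqref{BIL-B1-A} with $p=1$, $\alpha=\tfrac n4$ to invoke Theorem~\ref{MAX-FULL-EQ}, and for $n=2$ use the cancellation $\langle B(u,u),u\rangle=0$ together with \eqref{BIL-B1-C} to obtain \eqref{Cond-GLO} and conclude globality from Theorem~\ref{GLO-FULL-EQ}. Your write-up is in fact more explicit than the paper's in spelling out the verification of Assumption~\ref{lin-terms} and the decomposition $B(y,y)-B(x,x)=B(y-x,y)+B(x,y-x)$, but the substance is identical.
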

\begin{Rem}
 This theorem remains true in the case $\CO$ being a general unbounded domain. For the proof it is sufficient to take $A=A_1+\Id$, $R(\bu):=-\bu$ and argue
 as in the case of
 bounded domain.
 \end{Rem}

\begin{proof}
The existence and uniqueness of a maximal local solution will follow from  Theorem \ref{MAX-FULL-EQ} if we are able to prove that $F(\bu)=B(\bu,\bu)$ satisfies \eqref{eqn-local Lipschitz-F}.
But from \eqref{BIL-B1-A} we deduce that there exists
 $C>0$
such that for
\begin{equation*}
\lvert B(y)-B(x) \lvert \leq C \Big[ \lve y-x\lve \lve
y\lve^{1-\frac n4} \Vert y\Vert_\ast^\frac n4 + \Vert y-x\Vert_\ast^\frac n4
\lve y-x\lve^{1-\frac n4} \lve x\lve\Big],
\end{equation*}
for any $x, y\in \be$. This means that $B$ satisfies \eqref{eqn-local Lipschitz-F} with $p=1$ and $\alpha=\frac n4$.
Since $\alpha=\frac 34 \notin [0,\frac12]$ for $n=3$, the solution is only maximal. For $n=2$ we have $\alpha=\frac 12$ and $\langle B(\bu,\bu), \bu\rangle=0$. So thanks to Remark \ref{REM-GLO}, we only need
to check that \eqref{Cond-GLO} is verified by $B$. But this will follow from \eqref{BIL-B1-C}.
\end{proof}
 \subsubsection{Magnetohydrodynamic equations}\label{MHD}
Let $\CO\subset \RR^n$, $n=2,3$ be a simply connected, possibly unbounded domain. As above we assume that $\CO$ has a smooth boundary $\partial \CO$. Let $(Z_i, \CZ_i, \nu_i)$, $i=1,2$ be two measure spaces where the measures $\nu_i$ are $\sigma$-finite and positive.
We consider two mutually independent compensated Poisson random measures $\tilde{\eta}_i$ with
intensity measure $\nu_i$ defined on a complete filtered probability space $\mathfrak{P}=(\Omega,\mathcal{F},\FF,\mathbb{P})$.
We consider the magneto-hydrodynamic (MHD)  equations
  \begin{equation}\label{1.1u}
   du+[-\Delta u+
   u \nabla u]dt= [-\nabla \left(p+\frac{1}2 |b|^2\right)
   + b \nabla b]dt+
   \int_{Z_1} \tilde{f}(t, u(t), b(t),z_1)  \tilde{\eta}_1(dz_1, dt),
\end{equation}
\begin{equation}\label{1.1b}
   db+[-\nu_2\Delta b+
   u \nabla b]dt=
   [b\nabla u] dt +
   \int_{Z_2} \tilde{g}(t, u(t), b(t),z_2)  \tilde{\eta}_2(dz_2, dt),
\end{equation}
\begin{equation}\label{1.2}
   \di u=0, \quad  \di b =0 \quad
\end{equation}
where
$u=(u^{(1)}(x,t),u^{(2)}(x,t), u^{(3)}(x,t))$  and $b=(b^{(1)}(x,t),b^{(2)}(x,t), b^{(3)}(x,t))$
denote  velocity and magnetic fields,
$p(x,t)$ is a scalar pressure.
We consider the following
boundary conditions
\begin{equation}\label{1.1bc}
u=0, \quad   b\, \cdot\, n=0, \quad \text{curl }b\times n=0
\quad {\rm on}~~ \partial \CO
\end{equation}
The terms $\int_{Z_1} \tilde{f}(t, u(t), b(t),z_1)\tilde{\eta}_1(dz_1, dt)$ and
 $\int_{Z_2} \tilde{g}(t, u(t), b(t), z_2) \tilde{\eta}_2(dz_2, dt),$ represent random external volume
forces  and  the curl of random external current applied to the fluid.
We refer to \cite{LadSol-mhd60}, \cite{DL-mhd72} and \cite{SeTe}
for the mathematical theory for the MHD equations.

Let $\h=\h_1\times \h_2$, $\ve=\ve_1\times \ve_2$ and $\bbe=\bbe_1\times \bbe_2$. We define a bilinear map $B(\cdot, \cdot)$ on $\bve\times \bbe$
by \begin{eqnarray*}
\langle B(z_1,z_2), z_3\rangle & = &\langle B_1(u_1,u_2), u_3\rangle
-\langle B_1(b_1,b_2), u_3\rangle
\\ & &
+\, \langle B_1(u_1,b_2), b_3\rangle-
\langle B_1(b_1,u_2), b_3\rangle,
\end{eqnarray*}
for $z_1=(u_1, b_1)\in \bve$, $z_2=(u_2,b_2)\in \bbe$ and $z_3=(u_3,b_3)\in \h$.
We also set
\begin{equation*}
 A z= \begin{pmatrix}
  \Id+A_1 & 0\\
  0 & \Id+A_2
 \end{pmatrix}
 \begin{pmatrix}
  u\\  b
 \end{pmatrix}
\end{equation*}
for $z=(u, b)\in \bbe .$

We set $\bu:=(u, b)$ and
\begin{align*}
\int_Z \tilde{G}(t,\bu(t),z) \tilde{\eta}(dz,dt):=
\begin{pmatrix}
   \int_{Z_1} \tilde{f}(t, \bu(t),z_1) \tilde{\eta}_1(dz_1, dt) \\
  \int_{Z_2} \tilde{g}(t,\bu(t)), z_2) \tilde{\eta}_2(dz_2, dt)
 \end{pmatrix}.
\end{align*}
 We assume that $\tilde{f}$, $\tilde{g} $ are chosen in such a way that $\tilde{G}$ maps $\ve$ into $L^{2p}(Z,\nu, \ve)$
 and satisfies Assumption \ref{assum-G-Benard}.

 By setting $R\equiv -\Id$ and projecting on $\h$ we can see that \eqref{1.1u}-\eqref{1.1b} can be rewritten in the form
 \eqref{Hydro-1}. Now, by choosing $N=A$ we can show by arguing as in Theorem \ref{Thm-NS} that the stochastic Magnetohydrodynamic equations \eqref{1.1u}-\eqref{1.1b} has a local
 maximal solution which is global if the dimension $n=2$.

 \subsubsection{Magnetic B\'{e}nard problem.}
Let $\CO=(0, l) \times (0, 1)$ be a rectangular   domain  in the
vertical plane,  $(e_1, e_2)$ the standard basis in
 $\RR^2$. Let $(Z_i, \CZ_i, \nu_i)$, $i=1,2,3$ be three measure spaces where the measures $\nu_i$ are $\sigma$-finite and positive.
We consider three mutually independent compensated Poisson random measures $\tilde{\eta}_i$ with
intensity measure $\nu_i$ defined on a complete filtered probability space $\mathfrak{P}=(\Omega,\mathcal{F},\FF,\mathbb{P})$.

 We consider the equations
\begin{eqnarray*}
d u + [u \dn u-\kappa_1 \D u + \nabla \left(p+\frac{s}2 |b|^2\right)
   -s b \nabla b]dt &=&  \th e_2  dt + \int_{Z_1} \tilde{f}(t, u(t), \th(t),b(t),z_1)  \tilde{\eta}_1(dz_1, dt),\\
\quad \di u  & =&  0, 
\\
d \th +[u \dn \th -u^{(2)} -\k \D \th ]dt&=&  \;\int_{Z_2} \tilde{g}(t, u(t),\th(t), b(t),z_2) \tilde{\eta}_2(dz_2, dt), 
\\
  d b+[-\kappa_2\Delta b+
   u \nabla b -    b\nabla u]dt &=&
   \int_{Z_3} \tilde{h}(t, u(t),\th(t), b(t),z_3) \tilde{\eta}_2(dz_3, dt),\\ \di b   &=& 0, 
\end{eqnarray*}
with boundary conditions
\begin{eqnarray*}
u =0, \;\; \th=0, \;\; b^{(2)}=0,\;\; \partial_2 b^{(1)}=0 \;\;  \mbox{on}\;\;x^{(2)}=0\; \mbox{and}\;x^{(2)}=1,   \\
u, p, \th, b, u_{x^{(1)}}, \th_{x^{(1)}}, b_{x^{(1)}} \; \text{ are periodic in}\; x^{(1)} \;
\mbox{with period}\; l.
\end{eqnarray*}
This is  the  Boussinesq model coupled with magnetic field (see \cite{GaPa}) with stochastic perturbations.
Throughout we assume that $\kappa_1=\kappa_2=s=1$.
Let $\h= \h_3\times \h_4\times \h_5$, $\bve=\bve_3 \times \bve_4\times \bve_5$ and $\bbe=\bbe_3\times \bbe_4\times \bbe_5$.

We define a bilinear map $B(\cdot, \cdot)$ on $\bve\times \bbe$
by \begin{eqnarray*}
\langle B(z_1,z_2), z_3\rangle & = &\langle B_1(u_1,u_2), u_3\rangle
-\langle B_1(b_1,b_2), u_3\rangle
\\ & &
+\, \langle B_1(u_1,b_2), b_3\rangle-
\langle B_1(b_1,u_2), b_3\rangle
+ \langle B_2(u_1, \th_2,), \th_3 \rangle,
\end{eqnarray*}
for $z_1=(u_1,\th_1,b_1)\in \bve$, $z_2=(u_2,\th_2,b_2)\in \bbe$ and $z_3=(u_3,\th_3,b_3)\in \h$.
Using the notations in \eqref{STOKES}, we set
\begin{equation*}
 A z= \begin{pmatrix}
  A_3 & 0& 0\\
  0 & A_4 & 0\\
  0& 0& A_5
 \end{pmatrix}
 \begin{pmatrix}
  u\\ \theta\\ b
 \end{pmatrix}
\end{equation*}
for $z=(u,\th, b)\in E .$

We also set $R(u,\theta, b)= - (\theta e_2\, ,\,
u^{(2)}, 0)$ and $N=A$. Note that in this case $R \in \mathcal{L}(\h, \h)$ and $N\in \mathcal{L}(\bbe, \h)\cap \mathcal{L}(\bve, \bve^\ast)$.

We set $\bu:=(u, \th, b)$ and
\begin{align*}
 \int_Z \tilde{G}(t,\bu(t),z) \tilde{\eta}(dz,dt):=
\begin{pmatrix}
  \int_Z \tilde{f}(t, \bu(t),z_1) \tilde{\eta}_1(dz_1, dt) \\
  \int_Z \tilde{g}(t, \bu(t),z_2) \tilde{\eta}_2(dz_2, dt)\\
  \int_Z \tilde{h}(t, \bu(t),z_3) \tilde{\eta}_3(dz_3, dt).
 \end{pmatrix}
\end{align*}
We assume that $\tilde{f}, \tilde{g}, \tilde{h}$ are chosen such that $\tilde{G}$ verifies
 Assumption \ref{assum-G-Benard}.
 With these notations we can put the stochastic Magnetic B\'{e}nard problem into the abstract stochastic evolution equation
  \eqref{Hydro-1}.
%
\begin{thm}\label{Thm-Benard}
 The stochastic Magnetic B\'{e}nard problem \eqref{Hydro-1} admits a unique global strong solution.
\end{thm}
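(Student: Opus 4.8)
The plan is to recast \eqref{Hydro-1} for the magnetic B\'enard problem in the abstract framework of Section \ref{MAX-GLO} and then to argue exactly as in the proof of Theorem \ref{Thm-NS}. One takes $\h=\h_3\times\h_4\times\h_5$, $\bve=\bve_3\times\bve_4\times\bve_5$, $\bbe=\bbe_3\times\bbe_4\times\bbe_5$, the operator $A=\mathrm{diag}(A_3,A_4,A_5)$, the nonlinearity $F(\bu)=B(\bu,\bu)$ with $\bu=(u,\theta,b)$, the bounded linear perturbation $R(u,\theta,b)=-(\theta e_2,u^{(2)},0)$, and $N=A$. Since the underlying domain $\CO=(0,l)\times(0,1)$ is two--dimensional, the exponent produced by the bilinear estimates will be $\alpha=\tfrac n4=\tfrac12\in[0,\tfrac12]$, so that not only Theorem \ref{MAX-FULL-EQ} but also Theorem \ref{GLO-FULL-EQ} will be applicable and the maximal solution will in fact be global.

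First I would verify Assumption \ref{lin-terms}. The operators $A_i$ are self--adjoint and positive, hence so are $A$ and $N=A$; and by the very definition of the norms on $\bve$ and $\bbe$ one has $\langle A\bu,N\bu\rangle=|A\bu|^2=\lve\bu\rve_\ast^2$ and $\langle N\bu,\bu\rangle=|A^{1/2}\bu|^2=\lve\bu\rve^2$, so Assumption \ref{lin-terms} holds with $C_A=C_N=1$, together with $A\in\mathcal L(\bbe,\h)$ and $N\in\mathcal L(\bve,\bve^\ast)$. Next, writing $B(\bu,\bu)-B(\bv,\bv)=B(\bu-\bv,\bu)+B(\bv,\bu-\bv)$ and bounding each of the trilinear terms $b_1$ and $b_2$ entering the definition of $B$ by means of \eqref{BIL-B1-A} and \eqref{BIL-B1-B}, I would check that $F=B$ satisfies \eqref{eqn-local Lipschitz-F} with $p=1$ and $\alpha=\tfrac n4=\tfrac12$; since $R\in\mathcal L(\h,\h)$, Remark \ref{REM-GLO} allows the drift to be $B+R$. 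The assumptions on $\tilde f,\tilde g,\tilde h$ ensure that $\tilde G$ obeys Assumption \ref{assum-G-Benard}, which, together with the continuity of the embeddings $\bve\subset\h$, gives Assumption \ref{assum-G} for $G=\tilde G$. Theorem \ref{MAX-FULL-EQ} then yields a unique maximal local solution $(\bu,\tau_\infty)$ of \eqref{Hydro-1}.

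To upgrade this to a global solution I would invoke Theorem \ref{GLO-FULL-EQ} and Proposition \ref{THM-EXIST}, whose remaining hypotheses I would check as follows: $\langle A\bu,\bu\rangle=|A^{1/2}\bu|^2=\lve\bu\rve^2$, so $\tilde C_A=1$ works; condition \eqref{Cond-GLO} with $\alpha=\tfrac12$ follows from the sharper bilinear bounds \eqref{BIL-B1-C}--\eqref{BIL-B1-D} applied term by term; and one assumes $\me|\bu_0|^4<\infty$ on the initial datum. The decisive remaining point is the energy identity \eqref{FNU0} for the drift, namely $\langle B(\bu,\bu),\bu\rangle=0$ for every $\bu=(u,\theta,b)\in\bve$. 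Unfolding the definition of $B$,
\[
\langle B(\bu,\bu),\bu\rangle=b_1(u,u,u)-b_1(b,b,u)+b_1(u,b,b)-b_1(b,u,b)+b_2(u,\theta,\theta),
\]
the terms $b_1(u,u,u)$, $b_1(u,b,b)$ and $b_2(u,\theta,\theta)$ vanish by the usual integration--by--parts identity based on $\di u=0$, whereas
\[
b_1(b,b,u)+b_1(b,u,b)=\sum_{i,j}\int_\CO b^i\,\partial_i\!\big(b^j u^j\big)\,dx=-\int_\CO(\di b)\,(b\cdot u)\,dx=0
\]
by $\di b=0$. I expect the one real subtlety here to be confirming that all the boundary integrals produced by these integrations by parts vanish; this is exactly where the prescribed boundary conditions ($u=0$, $\theta=0$, $b^{(2)}=0$, $\partial_2 b^{(1)}=0$ on $x^{(2)}\in\{0,1\}$, and $l$--periodicity in $x^{(1)}$) are used. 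Once this cancellation — the only place where the particular structure of the magnetic B\'enard coupling intervenes — is established, Theorem \ref{GLO-FULL-EQ} gives $\mathbb P(\tau_\infty<T)=0$ for every $T>0$, which is the asserted unique global strong solution.
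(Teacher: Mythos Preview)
Your proposal is correct and follows essentially the same approach as the paper: verify Assumption~\ref{assum-F} for $B$ via the bilinear estimates \eqref{BIL-B1-A}--\eqref{BIL-B1-B} with $p=1$ and $\alpha=\tfrac{n}{4}=\tfrac12$, handle $R$ through Remark~\ref{REM-GLO}, invoke Theorem~\ref{MAX-FULL-EQ} for the maximal local solution, and then use $\langle B(\bu,\bu),\bu\rangle=0$ together with \eqref{BIL-B1-C}--\eqref{BIL-B1-D} to obtain \eqref{Cond-GLO} and conclude via Theorem~\ref{GLO-FULL-EQ}. Your write-up is in fact more explicit than the paper's (verification of Assumption~\ref{lin-terms}, the detailed cancellation in the energy identity with attention to the boundary terms), but the route is the same.
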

\begin{proof}
The maximal local solution will follow from  Theorem \ref{MAX-FULL-EQ}
if we are able to prove that $F(\bu)=B(\bu,\bu)+R(\bu)$ satisfies \eqref{eqn-local Lipschitz-F}.
Since $R$ is a bounded linear map, it follows from Remark \ref{REM-GLO} that it is sufficient to check \eqref{eqn-local Lipschitz-F} for $B$.
But from \eqref{BIL-B1-A} and \eqref{BIL-B1-B} we deduce that there exists
 $C>0$
such that for
\begin{equation*}
\lvert B(y)-B(x) \lvert \leq C \Big[ \lve y-x\lve \lve
y\lve^{1-\frac n4} \Vert y\Vert_\ast^\frac n4 + \Vert y-x\Vert_\ast^\frac n4
\lve y-x\lve^{1-\frac n4} \lve x\lve\Big],
\end{equation*}
for any $x, y\in \be$. This means that $B$ satisfies \eqref{eqn-local Lipschitz-F} with $p=1$ and $\alpha=\frac n4$.
Since $n=2$ we have $\alpha=\frac 12$ and $\langle B(\bu,\bu), \bu\rangle=0$. So thanks to Remark \ref{REM-GLO}, we only need
to check that \eqref{Cond-GLO} is verified by $B$. But this will follow from \eqref{BIL-B1-C} and \eqref{BIL-B1-D}.
\end{proof}

 \subsubsection{Boussinesq model for the B\'enard convection}

Let $\CO$ be a (possibly) domain of $\mathbb{R}^n$, $n=2,3$,  $\{e_i, \ldots, e_n\}$ a standard basis in
 $\RR^n$  and $x=(x^{(1)},\ldots,x^{(n)})$ an element of $\RR^n$. We assume that $\CO$ has a smooth boundary $\partial \CO$. Let $(Z_i, \CZ_i, \nu_i)$, $i=1,2$ be two
 measure spaces where the measures $\nu_i$ are $\sigma$-finite and positive.
We consider two mutually independent compensated Poisson random measures $\tilde{\eta}_i$ with
intensity measure $\nu_i$ defined on a complete filtered probability space $\mathfrak{P}=(\Omega,\mathcal{F},\FF,\mathbb{P})$.

 Let us consider the B\'{e}nard convection problem
(see e.g. \cite{Foias} and the references therein) given by the following system
\begin{eqnarray}
d u + [u \nabla u- \D u + \nabla p ]dt&=&  \th e_n  dt+ \int_{Z_1} \tilde{f}(t, u(t), b(t),z_1) \tilde{\eta}_1(dz_1, dt),
\quad \di u   = 0, \label{1.1}\\
 d\th +[u \nabla \th -u^{(n)} -\D \th ]dt&=&
\int_{Z_2} \tilde{g}(t, u(t), \th(t),z_2) \tilde{\eta}_2(dz_2, dt),\label{eqn3}
\end{eqnarray}
with boundary conditions
\begin{eqnarray*}
u =0\;\;  \& \;\; \th=0 \;\; \mbox{on } \partial \CO.
\end{eqnarray*}
 Here $p(x,t)$ is the pressure field,   $\int_{Z_1} \tilde{f}(t, u(t), b(t), z_1) \tilde{\eta}_1(dz_1, dt)$,
 $\int_{Z_2} \tilde{g}(t, u(t), b(t), z_2) \tilde{\eta}_2(dz_2, dt)$   represent two random external forces,
$ u=(u^{(1)}(x,t),\ldots ,u^{(n)}(x,t))$ is the velocity field and  $\th=\th(x,t)$
is the temperature field.

We set $\h=\h_3\times \h_4$, $\ve=\ve_3\times \ve_4$, $\bbe=\bbe_3\times \bbe_4$. Following the notations given in \eqref{STOKES} we define
\begin{equation*}
 A z= \begin{pmatrix}
  A_3 & 0\\
  0 & A_4
 \end{pmatrix}
 \begin{pmatrix}
  u\\ \theta
 \end{pmatrix}
\end{equation*}
for $z=(u,\th)\in E .$ We define a bilinear map $B(\cdot, \cdot)$ on $\bve\times \bbe$
by \begin{eqnarray*}
\langle B(z_1,z_2), z_3\rangle & = &\langle B_1(u_1,u_2), u_3\rangle
+ \langle B_2(u_1, \th_2,), \th_3 \rangle,
\end{eqnarray*}
for $z_1=(u_1,\th_1)\in \bve$, $z_2=(u_2,\th_2)\in \bbe$ and $z_3=(u_3,\th_3)\in \h$.
We also put $R(u,\theta, b)= - (\theta e_2\, ,\,
u^{(n)})$ and $N=A$.

As before we set $\bu:=(u, \th)$ and
\begin{align*}
\int_Z \tilde{G}(t,\bu(t),z) \tilde{\eta}(dz,dt):=
\begin{pmatrix}
   \int_{Z_1} \tilde{f}(t, u(t), b(t), z_1)  \tilde{\eta}_1(dz_1, dt) \\
  \int_{Z_2} \tilde{g}(t, u(t), b(t), z_2)  \tilde{\eta}_2(dz_2, dt)
 \end{pmatrix}.
\end{align*}
 We assume that $\tilde{f}$, $\tilde{g}$ are chosen in such a way that $\tilde{G}$ maps $\ve$ into $L^{2p}(Z,\nu, \ve)$
 and satisfies Assumption \ref{assum-G-Benard}.

By Arguing as in the case of Navier-Stokes equations, Magnetic B\'{e}nard problem and MHD equations
we can show that if the random external force satisfies
 Assumption \ref{assum-G-Benard}, then the Boussinesq model for the B\'enard convection admits a
 unique maximal strong solution which is global is $n=2$.

\subsection{Shell models of turbulence}
Here, we use again the same notations as used in
 \cite{Chueshov}.
Let $H$ be a set of all sequences $u=(u_1, u_2,\ldots)$ of complex
numbers such that $\sum_n |u_n|^2<\infty$. We consider $H$ as a
\emph{real} Hilbert space endowed  with the inner product
$(\cdot,\cdot)$ and the norm $|\cdot|$ of the form
\[
(u,v)={\rm Re}\,\sum_{n=1}^\infty u_n v_n^*,\quad
|u|^2 =\sum_{n=1}^\infty |u_n|^2,
\]
where $v_n^*$ denotes the complex conjugate of $v_n$. In this space $H$ we
consider the evolution equation \eqref{Hydro-1} with $R=0$ and with linear operator $A$
and bilinear mapping $B$ defined by the formulas
\[
(Au)_n =\nu k_n^2 u_u,\quad n=1,2,\ldots,\qquad
D(A)=\left\{ u\in H\, :\; \sum_{n=1}^\infty k_n^4 |u_n|^2<\infty\right\},
\]
where $\nu>0$, $k_n=k_0\mu^n$ with $k_0>0$ and $\mu>1$, and
\[
\left[B(u,v)\right]_n=-i\left( a k_{n+1} u_{n+1}^* v_{n+2}^*
+b k_{n} u_{n-1}^* v_{n+1}^* -a k_{n-1} u_{n-1}^* v_{n-2}^*
-b k_{n-1} u_{n-2}^* v_{n-1}^*
\right)
\]
for $n=1,2,\ldots$, where $a$ and $b$ are real numbers (here above we also assume that
$ u_{-1}= u_{0}=v_{-1}= v_{0}=0$). This choice  of $A$ and $B$ corresponds to the
so-called GOY-model (see, e.g., \cite{OY89}).
If we take
\[
\left[B(u,v)\right]_n=-i\left( a k_{n+1} u_{n+1}^* v_{n+2}
+b k_{n} u_{n-1}^* v_{n+1} +a k_{n-1} u_{n-1} v_{n-2}
+b k_{n-1} u_{n-2} v_{n-1}
\right),
\]
then we obtain the Sabra shell model introduced in \cite{LPPPV98}.
\par
One can easily  show (see \cite{BBBF06} for the GOY model and \cite{CLT06}
for the Sabra model) that
the trilinear form
\[
\langle B(u,v), w\rangle\equiv {\rm Re}\, \sum_{n=1}^\infty [B(u,v)]_n\,  w_n^*
\]
 satisfies the inequality
\[
\left|\langle B(u,v), w\rangle\right|\le C |u||A^{1/2} v| |w|,\quad
\forall  u,w\in H, \quad \forall v\in D(A^{1/2}).
\]
Hence taking  $\h=H$, $(\ve, \lve\cdot \lve)=(D(A^\frac12), \lvert A^\frac12 \cdot \lvert)$ and
$(\bbe, \lve \cdot \rve_\ast):=(D(A), \lvert A\cdot \rvert)$ we infer that
the nonlinear term for the shell models satisfies Assumption \ref{assum-F} with $\alpha =0$ and $p=1$.
By Arguing as before we can show that if the random external force satisfies
 Assumption \ref{assum-G-Benard}, then stochastic shell models admits a
 unique global strong solution.
\medskip\par
Let us consider the following dyadic model (see, e.g., \cite{KP05}
and the references therein) \begin{equation}\label{dyadic-m}
\partial_t u_n+\nu\lambda^{2\alpha n} u_n-\lambda^{n} u^2_{n-1}+
\lambda^{n+1} u_n u_{n+1}=f_n, \quad n=1,2,\ldots,
\end{equation}
where  $\nu, \alpha>0$, $\lambda>1$, $u_0=0$. By setting
$[B(u,v)]_n=-\lambda^n u_{n-1} v_{n-1}+ \lambda^{n+1}\, u_n\, v_{n+1}$ and
$(Au)_n=\nu\, \lambda^{2\alpha n}\, u_n$, it is not difficult to show that the system \eqref{dyadic-m}
falls also in the framework of the shell models of turbulence provided that $\alpha\ge 1/2$.

\subsection{3D Leray $\alpha$-model for Navier-Stokes equations}
As in the previous subsections, we use the same notations as used
in
 \cite{Chueshov}.

In a bounded 3D domain $\CO$ we consider the following
equations:
\begin{align}
& \partial_t u - \Delta u + v\nabla u + \nabla p =f ,\label{1.1.0-3da}
\\
& (1-\alpha \Delta)v=u,\quad \mbox{\rm div}\, u=0,\quad \mbox{\rm div}\, v=0
~~\mbox{ in }~~\CO, \label{1.1.0-3db} \\
&v=u=0\quad\mbox{on}\quad \partial \CO. \label{1.1.0-3dc}
\end{align}
where $u= (u^{(1)}, u^{(2)}, u^{(3)})$ and $v= (v^{(1)}, v^{(2)}, v^{(3)})$  are unknown fields,
 $p(x,t)$ is the pressure. We refer to \cite{titi2,titi1} and references for results related to \eqref{1.1.0-3da}-\eqref{1.1.0-3dc}.

 Let $\h=\h_1$, $\ve=\ve_1$ and $\bbe=\bbe_1$ be the Hilbert spaces defined in Subsection \ref{notation}.
 Set $A=A_1$, $G_\alpha=\left(Id+\alpha A\right)^{-1}$ and define a bilineear mapping $B(\cdot, \cdot)$ on $\ve\times \bbe$ by setting
 $$B(u,v)=B_1(G_\alpha u, v),$$
 for any $u\in \ve$ and $v\in \bbe$.

 Arguing as in \cite[Subsubsection 2.1.5]{Chueshov} we can show that there exists $C>0$ such that
 \begin{equation}\label{LER-3D}
  \lvert B(u,v)\rvert \le C \lve u\rve_{L^3} \lve \nabla v\rve_{L^3},
 \end{equation}
for any $u\in \el^3$ and $v\in \mathbb{W}^{1,3}$. 
Recall that in three dimensional case we have the following Gagliardo-Nirenberg
inequality
\begin{equation}\label{GAG-NIR-3D}
 \lve u\rve_{L^3}\le c \lvert u\rvert^\frac12 \lve u \rve^\frac12_{\bh^1}, \quad u\in \bh^1.
\end{equation}
Now using this inequality and the continuous embedding $\bh^1\subset \el^3$

we infer from  \eqref{LER-3D} that
\begin{align}
 \lvert B(u,v)\rvert \le C \lve u\rve_\ve \lve v\rve_\ve^\frac12 \lve v\rve_\bbe,\label{LER-BIL-1}\\
 \lvert B(u,v)\rvert \le C \lvert u\rvert_\h^\frac12 \lve u\rve_\ve^\frac12 \lve v\rve_\ve^\frac12 \lve v\rve_\bbe, \label{LER-BIL-2}
\end{align}
for any $u\in \ve$, $v\in \bbe$.

Now we set $R\equiv 0$ and $N=A$. Thanks to \eqref{LER-BIL-1}-\eqref{LER-BIL-2} we see that the nonlinear term
for the 3D Leray $\alpha$-model for Navier-Stokes equations satisfies the assumptions of Theorem \ref{GLO-FULL-EQ} with $\alpha=\frac12$
and $p=1$. Therefore we can argue as in the case of 2D stochastic Navier-Stokes equations and show that the stochastic 3D Leray
$\alpha$-model for Navier-Stokes equations admits a global solution if the random external force satisfies
 Assumption \ref{assum-G-Benard}.

\appendix

\section{Existence of solution to the linear SPDE \eqref{lin-eq-1} }

Throughout this appendix we assume that the separable Hilbert spaces $E, V, H$ are defined as before.

 Let $(Z,\mathcal{Z})$ be a separable metric space and let
 $\nu$ be a $\sigma$-finite positive measure on it. Let $\eta: \Omega\times \mathcal{B}(\mathbb{R}_+)\times\mathcal{Z}\to
\bar{\mathbb{N}}$ is a time homogeneous Poisson random measure with
the intensity measure $\nu$. We will denote by $\tilde\eta=\eta-\gamma$
the compensated Poisson random measure associated to $\eta$ where
the compensator $\gamma$ is given by
$$
\mathcal{B}(\mathbb{R}_+)\times \mathcal{Z}\ni (A,I)\mapsto  \gamma(A,I)=\nu(A)\lambda(I)\in\mathbb{R} _ +.
$$
Let $\phi\in M^2(0,T;\h)$ and $\psi\in M^2(0,T;L^4(Z,\nu; \ve))$. We will show in the next theorem that
the following linear SPDEs ( which is \eqref{lin-eq-1}) has a unique solution
\begin{equation}\label{lin-eq-2}
 \begin{cases}
  d\bu(t)+[A\bu(t)+\phi(t)]dt=\int_Z \psi(t,z ) \wie(dz,dt), \,\, t\in[0,T],\\
  \bu(0)=\bu_0.
 \end{cases}
\end{equation}

\begin{thm}\label{Appendix-LIN}
Let $A$, $N$ be as in Assumption \ref{lin-terms}, $\phi\in M^2(0,T;\h)$, $\psi\in M^2(0,T;L^2(Z,\nu; \ve))$. Let $\bu_0$ be a $\ve$-valued $\mathcal{F}_0$-measurable
random variable satisfying $\mathbb{E}\lvert \bu_0\rvert^2<\infty$.
Then there exists a unique progressively measurable process $\bu$ taking values in $\ve$ such that almost surely
 \begin{equation}\label{lin-eq-3}
  \langle \bu(t), w\rangle +\int_0^t \langle A\bu(s)+\phi(s), w\rangle ds= \langle \bu_0, w\rangle +\int_0^t \int_Z \langle \psi(s,z ), w\rangle \wie(dz,ds),
 \end{equation}
 for all $t\in [0,T]$ and $w\in \h$. Moreover,
 $ \bu\in L^\infty(0,T;\ve)\cap L^2(0,T;\be)\cap \mathbf{D}(0,T;\ve) $ with probability 1.
\end{thm}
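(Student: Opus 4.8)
The plan is to build the solution by a Galerkin scheme adapted to $N$, to derive uniform energy estimates by applying It\^o's formula to $\Psi(u)=\langle Nu,u\rangle$ (as in the proof of Lemma~\ref{Lambda-ITS}), to pass to the limit by weak compactness -- which is legitimate because the equation is \emph{linear} -- and finally to upgrade the path regularity; uniqueness is obtained separately from the same energy identity. Concretely, I would fix an orthonormal basis $\{e_k\}_{k\ge 1}$ of $\h$ consisting of eigenvectors of the self-adjoint operator $N$, so that $e_k\in\be$ and the $\h$-orthogonal projection $\pi_m$ onto $H_m:=\mathrm{span}\{e_1,\ldots,e_m\}$ commutes with $N$ (the coercivity $\langle Nu,u\rangle\ge C_N\lve u\rve^2$ makes $N$ positive and such a basis total; this is available in all the examples of Section~\ref{EXAM}, where $N$ has compact resolvent). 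For each $m$ the finite-dimensional linear equation
\begin{equation*}
  \bu_m(t)+\int_0^t\pi_m\big[A\bu_m(s)+\phi(s)\big]\,ds=\pi_m\bu_0+\int_0^t\int_Z\pi_m\psi(s,z)\,\wie(dz,ds)
\end{equation*}
has a unique \cadlag solution $\bu_m$ by the standard theory of stochastic differential equations with Lipschitz (here linear) coefficients driven by a compensated Poisson random measure.

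\textbf{A priori estimates and passage to the limit.} Applying It\^o's formula to $\Psi(\bu_m)$ and using $N\pi_m=\pi_m N$, the drift contributes $-2\int_0^t\langle A\bu_m,N\bu_m\rangle\,ds-2\int_0^t\langle\phi,N\bu_m\rangle\,ds$, the first term being $\le -2C_A\int_0^t\lve\bu_m\rve_\ast^2\,ds$ by Assumption~\ref{lin-terms} and the second absorbed by Cauchy's inequality with $\eps$ together with $N\in\mathcal{L}(\ve,\ve^\ast)$; the compensator term equals $\int_0^t\int_Z\langle N\pi_m\psi,\pi_m\psi\rangle\,\nu(dz)\,ds$, which is bounded by $C\,\nov\,\me\int_0^T\int_Z\lve\psi(s,z)\rve^2\,\nu(dz)\,ds$; and the purely discontinuous martingale is controlled by the BDG and Young inequalities exactly as in Lemma~\ref{Lambda-ITS}. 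Choosing $\eps$ small, invoking $\Psi(\bu_m)\ge C_N\lve\bu_m\rve^2$ and Gronwall's lemma yields a bound on $\me\sup_{t\le T}\lve\bu_m(t)\rve^2+\me\int_0^T\lve\bu_m(s)\rve_\ast^2\,ds$ uniform in $m$ (after a routine stopping-time localization to accommodate the $\ve$-norm of $\bu_0$, which is only assumed $\h$-square-integrable). Hence, along a subsequence, $\bu_m\rightharpoonup\bu$ weakly-$\ast$ in $L^2(\Omega;L^\infty(0,T;\ve))$ and weakly in $L^2(\Omega\times(0,T);\be)$. Since $A\in\mathcal{L}(\be,\h)$ and every term of the equation is linear in the unknown, one passes to the limit in the tested formulation \eqref{lin-eq-3}: the stochastic integral converges because $\pi_m\psi\to\psi$ in $M^2(0,T;L^2(Z,\nu;\h))$ and by the continuity estimate \eqref{ineq-2.03} of Theorem~\ref{THM-ZB+EH}. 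Weak lower semicontinuity of the norms then gives $\bu\in L^\infty(0,T;\ve)\cap L^2(0,T;\be)$ almost surely.

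\textbf{Path regularity and uniqueness.} Since $A\bu+\phi\in M^2(0,T;\h)$, the Lebesgue integral $t\mapsto\int_0^t(A\bu(s)+\phi(s))\,ds$ is continuous with values in $\h$, while $t\mapsto\int_0^t\int_Z\psi(s,z)\,\wie(dz,ds)$ is an $\ve$-valued \cadlag martingale by Theorem~\ref{THM-ZB+EH}; combining these with the $L^\infty(0,T;\ve)\cap L^2(0,T;\be)$ regularity and invoking \cite[Theorem~2]{Gyongy+Krylov}, precisely as in Lemma~\ref{Weak-continuity}, shows $\bu\in\mathbf{D}(0,T;\ve)$ almost surely. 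For uniqueness, the difference $w$ of two solutions satisfies, pathwise, the deterministic equation $w'+Aw=0$ with $w(0)=0$ (the jump parts cancel); applying the energy identity of Lemma~\ref{Lambda-ITS} to $\Psi(w)$ gives $\Psi(w(t))=-2\int_0^t\langle Aw(s),Nw(s)\rangle\,ds\le-2C_A\int_0^t\lve w(s)\rve_\ast^2\,ds\le 0$, and since $\Psi(w(t))\ge C_N\lve w(t)\rve^2\ge 0$, we conclude $w\equiv0$.

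\textbf{Main obstacle.} The two genuinely delicate points are the choice of a Galerkin basis along which $\pi_m$ and $N$ commute (which is what keeps the $\Psi$-energy identity clean, and is available in all the examples), and -- more seriously -- the upgrade from the $L^\infty(0,T;\ve)\cap L^2(0,T;\be)$ bound to \cadlag regularity in $\ve$: this cannot be read off the equation directly, because the drift integral is only continuous with values in $\h$, and must instead be deduced from the Gy\"ongy--Krylov type result already invoked in Lemma~\ref{Weak-continuity}.
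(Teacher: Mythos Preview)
Your argument is essentially correct, but it takes a genuinely different route from the paper. The paper does \emph{not} use Galerkin approximation: it builds the solution by a Picard iteration \`a la Pardoux, setting $\bu^{[1]}\equiv\bu_0$ and
\[
\bu^{[n+1]}(t)=\bu_0-\int_0^t\!\big[A\bu^{[n]}(s)+\phi(s)\big]\,ds+\int_0^t\!\!\int_Z\psi(s,z)\,\wie(dz,ds),
\]
then showing, by testing the increment against $N(\bu^{[n+1]}-\bu^{[n]})$, that $\Phi^n(t):=\EE\sup_{s\le t}\lve\bu^{[n+1]}(s)-\bu^{[n]}(s)\rve^2$ satisfies $\Phi^n(t)\le C\int_0^t\Phi^{n-1}(s)\,ds$, hence is summable. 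The $L^2(0,T;\be)$ bound is obtained afterwards by applying It\^o's formula to $\Psi(\bu^{[n]})$, and the \cadlag $\ve$-regularity is deduced from \cite[Theorem~2]{Gyongy+Krylov}, exactly as you do.

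The practical difference is that the Picard scheme works directly under Assumption~\ref{lin-terms} with no further structure on $N$, whereas your Galerkin argument needs an $\h$-orthonormal eigenbasis of $N$ lying in $\be$ so that $\pi_m$ commutes with $N$; this is essentially a compact-resolvent hypothesis and is \emph{not} part of the abstract framework (the paper explicitly allows unbounded Poincar\'e domains, cf.\ the remarks in Section~\ref{EXAM}, where such compactness fails). You flag this yourself, and you are right that it holds in all the concrete examples, but it is worth noting that the paper's iteration avoids the issue altogether. On the other hand, your proposal spells out the uniqueness argument cleanly via the $\Psi$-energy identity for the difference, which the paper's proof leaves implicit.
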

\begin{proof}
We will use the Picard method as presented in \cite[Chapitre 3, Section 1]{Pardoux-75}. Throughout this proof we set $$\Lve N\Lve:=\max\left(\noe, \nov \right).$$
For positive integer $n$ we define a sequence $\{\bu^{[n]}(t), t\in[0,T]\}$ of stochastic processes as follows
\begin{equation*}
\begin{cases}
 \bu^{[1]}(t)=\bu_0,\\
 \bu^{[n+1]}(t)=\bu_0-\int_0^t[A\bu^{[n]}(s)+\phi(s)]ds+\int_0^t\int_Z \psi(s,z ) \wie(dz,ds), \,\, t\in [0,T],\,\, n\ge 2.
\end{cases}
 \end{equation*}
Thanks to our assumption and \cite[Theorem 2]{Gyongy+Krylov} the strochastic processe
$$ \bu^{[2]}(t)=\bu_0-\int_0^t[A\bu^{[1]}(s)+\phi(s)]ds +\int_0^t\int_Z \psi(s,z ) \wie(dz,ds) $$ is a well-defined
$\ve$-valued adapted and \cadlag process.
By iterating this definition we see that for each $n\ge 2$ $\bu^{[n]}$ is also a well-defined $\ve$-valued adapted and \cadlag process.

Now we will show that the sequence $\bu^{[n]}$ is converging in appropriate topolgy to the solution $\bu$ of \eqref{lin-eq-2}. In fact we will show that
$\bu^{[n]}\in L^2(\Omega;L^\infty(0,T;\ve))$ is a Cauchy sequence.
For this aim define $\Phi^n(t)=\EE\sup_{s \in[0,t] }\lve \bu^{[n+1]}(s)-\bu^{[n]}(s)\rve^2$ for all $n\ge1$. We have
\begin{equation*}
 \bu^{[n+1]}(t)-\bu^{[n]}(t)=-\int_0^t A(\bu^{[n]}(s)-\bu^{[n-1]})ds,
\end{equation*}
for any $t\in [0,T]$ and $n\ge 2$.
Mutliplying this equation by $N(\bu^{[n+1]}-\bu^{[n]})$, using Assumption \ref{lin-terms} and the Cauchy inequality with arbitrary $\eps>0$ we infer that
\begin{equation*}
 (C_N-\eps)\sup_{s\in [0,t] }\lve \bu^{[n+1]}(s)-\bu^{[n]}(s)\rve^2\le \frac{\Lve N \Lve^2 \Lve A\Lve^2}{4 \eps} \int_0^t \lve \bu^{[n]}(s)-\bu^{[n-1]}(s)\rve^2 ds.
\end{equation*}
Choosing $\eps=C_N/2$ taking the mathematical expectation to both side of the last estimate implies
\begin{equation}\label{To-Iter}
 \Phi^n(t)\le \frac{\Lve N \Lve^2 \Lve A\Lve^2}{2 C^2_N} \int_0^t \Phi^{n-1}(s) ds.
\end{equation}
As in \cite{Pardoux-75} we iterate \eqref{To-Iter} and obtain
\begin{equation*}
 \Phi^n(t)\le \left(\frac{\Lve N \Lve^2 \Lve A\Lve^2}{2 C^2_N} \right)^n  \frac{1}{ n!}\Phi^1(t),
\end{equation*}
from which we deduce that $(\bu^{[n]}; n\ge 1)$ forms a Cauchy sequence in $ L^2(\Omega;L^\infty(0,T;\ve))$.
Therefore, there exists $\bu\in  L^2(\Omega;L^\infty(0,T;\ve))$ such that
\begin{equation} \label{Conv-1}
 \bu^{[n]} \rightarrow \bu \text{ strongly in }  L^2(\Omega;L^\infty(0,T;\ve)).
\end{equation}
Now, we prove that $\bu^{[n]}$ is bounded in $L^2(\Omega; L^2(0,T;\be))$. For this purpose we apply It\^o formula to $\Psi(u)=\langle u, Nu\rangle$ and use Assumption
\ref{assum-G} to infer that
\begin{equation}\label{APP-INEQ-1}
\begin{split}
 C_N\lve\bu^{[n]}(t\wedge \tau)\rve^2+2 C_A\int_0^{t\wedge \tau} \lve \bu^{[n]}(s)\rve_\ast^2 ds\le
 \int_0^T \Big[\lvert \phi(s)\rvert^2 +\int_Z \langle  N \psi(s,z), \psi(s,z)  \rangle\nu(dz) \Big] ds\\
 + \int_0^{t\wedge\tau}\int_Z  \Big[\langle \psi(s,z), N \bu^{[n]}(s)\rangle +\langle \psi(s,z) , N \psi(s,z) \rangle \Big] \wie(dz,ds)\\
 +\Psi(\bu_0)+
\Lve N\Lve^2 \int_0^T \lvert \bu^{[n]}(s)\rvert^2 ds .
 \end{split}
\end{equation}
where $\tau$ is an arbitrary stopping time localizing the local martingale
$$\int_0^{t}\int_Z  \Big[\langle \psi(s,z), N \bu^{[n]}(s)\rangle +\langle \psi(s,z) , N \psi(s,z) \rangle \Big] \wie(dz,ds).$$
We easily derive from \eqref{APP-INEQ-1} that
\begin{equation*}
\begin{split}
 C_N\lve \bu^{[n]}(t\wedge \tau)\rve^2+2 C_A\int_0^{t\wedge \tau} \lve \bu^{[n]}(s)\rve_\ast^2 ds\le
 \int_0^T \Big[\lvert \phi(s)\rvert^2 +\Lve N\Lve^2 \int_Z \lve \psi(s,z)\rve^2\nu(dz) \Big] ds\\
 + \int_0^{t\wedge\tau}\int_Z  \Big[\langle \psi(s,z), N \bu^{[n]}(s)\rangle +\langle \psi(s,z) , N \psi(s,z) \rangle \Big] \wie(dz,ds)\\
 +\Psi(\bu_0)+
\Lve N\Lve^2 \int_0^T \lve \bu^{[n]}(s)\rve^2 ds .
 \end{split}
\end{equation*}
Since, by the first part of our proof, $\int_0^T \EE \lve \bu^{[n]}(s)\rve^2 ds $ is bounded and $\tau$ is arbitrary, by taking mathematical
expectation to both sides of the last inequality we derive that
there exists $C>0$ such that
\begin{equation*}
\EE\int_0^T \lve \bu^{[n]}(s)\rve_\ast^2 ds\le C.
\end{equation*}
This implies that one can  find a subsequence of $\bu^{[n]}$, which will be denoted wuth the same fashion, such that
\begin{equation}\label{Conv-1-b}
 \bu^{[n]}\rightarrow \bu \text{ weakly in } L^2(\Omega;L^2(0,T;\be)).
\end{equation}
Since, by assumption, $A\in \mathcal{L}(\be,\h)$ it follows from \eqref{Conv-1-b} that
\begin{equation}\label{Conv-2}
 A\bu^{[n]}\rightarrow A\bu \text{ weakly in }  L^2(\Omega;L^2(0,T;\h)).
\end{equation}
Owing to the convergences \eqref{Conv-1} and \eqref{Conv-2} we easily derive that, with probability 1,
$\bu$  satisfies \eqref{lin-eq-3} for all
$t\in [0,T]$ and $w\in \h$.
This means that \eqref{lin-eq-2} holds for all $t\in [0,T]$ and all $w\in \h$ with probability 1.
Since $\bu$ is the limit in $L^2(\Omega;L^\infty(0,T;\ve))$ of a sequence of adapted processes, we infer that $\bu$ is adapted.
Thanks to our assumption and \cite[Theorem 2]{Gyongy+Krylov} the process $\bu$ is \cadlag. Because $\bu$ is adapted and \cadlag it
admits a progressively measurable version which is still denoted with the same symbol.
The proof of our theorem is complete.
\del{
 We will closely follow \cite[Lemma 1.4]{Pardoux}. Let $M(\cdot)=\int_0^{\cdot} \psi(s) dW(s)$. We have $M\in M^2(0,T; H)$ because $\psi\in M^2(0,T;\mathcal{J}_2(K;H))$.
 Hence there exists $\Omega_0$ with $\mathbb{P}(\Omega_0)=1$ such that $AM(\omega)\in L^2(0,T; \h)$ for each $\omega \in \Omega_0$. For each $\omega \in \Omega_0$ we can
 infer from Lemma \ref{Appendix-DET} that there exists a unique
 $v \in L^\infty(0,T;H)\cap L^2(0,T;V)$ satisfying \eqref{DET-LIN} (with $v_0=u_0$ and $f= -\phi-AM$) on $\Omega_0$ for all $t\in [0,T]$. That is, we have just proved that
  there exist $v \in L^\infty(0,T;H)\cap L^2(0,T;V)$ a.s such that
 \begin{equation}\label{DET-LIN-2}
\begin{cases}
 \frac{\partial \bv(t)}{\partial t}+A\bv(t)=-\phi(t)-AM(t), \\
 \bv(0)=u_0,
\end{cases}
\end{equation}
for all $t\in [0,T]$ with probability 1. One can easily check that $v$ is adapted. Since $\Phi(\cdot):=\langle \bv(\cdot), w\rangle $ is continuous for any $w\in H$
it follows from the adaptedness of $v$ that $\Phi$ is progressively measurable for any $w\in H$.
Hence, necessarily $v$ ias also an $H$-valued progressively measurable process.

Now by setting $u:=v+M$ we easily see that $u$ is a progressively measurable solution to \eqref{lin-eq-2}. Arguing as in the proof of Lemma \ref{Appendix-DET} we can show that
$u$ is unique. It also follows from the definition of $u$ above that $u\in L^\infty(0,T;H)\cap L^2(0,T;V)$, and $u\in C_w(0,T;H) $ with probability 1.}
\end{proof}
\section*{Acknowledgments}
E.~ Hausenblas and P.~A.~Razafimandimby are funded by the FWF-Austrian Science Fund through the projects P21622 and M1487, respectively.
The research on this paper was initiated during the visit of H.~Bessaih to the Montanuniversit\"at Leoben in June 2013.
She would like to thank the Chair of Applied Mathematics at the Montanuniversit\"at Leoben for hospitality. Part of this paper was written during Razafimandimby's visit at
 the University of Wyoming in November 2013. He is very grateful for the warm and kind hospitality of the Department of Mathematics at the University of Wyoming.


\end{document}